\newcounter{myclaim}
\newenvironment{myclaim}[1][]%
	{\refstepcounter{myclaim}\vspace{1ex}\noindent {(\it\arabic{myclaim}) {#1}{}}\it}{\vspace{1ex}}
\newenvironment{proofclaim}[1][]%
	{\noindent {}{#1}{}}{ This proves~(\arabic{myclaim}).\vspace{1ex}}
\definecolor{ForestGreen}{RGB}{34,139,34}
\newcommand {\sm} {\setminus}
\begin{document}
\title{A structural description of Zykov and Blanche Descartes graphs}
%
%
\author{Malory Marin \and
Stéphan Thomassé \and
Nicolas Trotignon \and Rémi Watrigant}
\authorrunning{ }
%
\institute{\small ENS de Lyon, CNRS, Université Claude Bernard Lyon 1, LIP, UMR 5668, \\69342 Lyon Cedex 07, France}
%
\maketitle              

\begin{abstract} 
In 1949, Zykov proposed the first explicit construction of triangle-free graphs with arbitrarily large chromatic number. We define a Zykov graph as any induced subgraph of a graph created using Zykov's construction. We give a structural characterization of Zykov graphs based on a specific type of stable set, that we call splitting stable set. It implies that recognizing  this class is NP-complete, while being FPT in the treewidth of the input graph.  We provide similar results for the Blanche Descartes construction. 
\end{abstract}

\section{Introduction}
A class of graphs is \emph{hereditary} if it is closed under taking induced subgraphs.  A hereditary class of graphs $\mathcal{C}$ is \emph{$\chi$-bounded} if there exists a function $f$ such that every graph $G$ in $\mathcal C$ satisfies $\chi(G)\leq f(\omega(G))$. Some classes are $\chi$-bounded and some are not.  There has been much research about this topic. Some questions remain open, such as the Gy\'arf\'as-Sumner conjecture, see~\cite{scottSeymourSurvey}.

It turns out that some conjectures about $\chi$-boundedness have been disproved by studying carefully some constructions of triangle-graphs of arbitrarily large chromatic number defined much before the conjectures were first stated.  In particular, the construction of Burling~\cite{burling1965coloring}, defined in 1965, turned out to be a counter-example to a conjecture of Scott (see~\cite{DBLP:journals/dcg/PawlikKKLMTW13}), to a conjecture of Trotignon (see~\cite{davies:wheels,PT-Burling1,PT-Burling2,PT-Burling3}) and to a conjecture of Thomassé, Trotignon and Vu\v skovi\'c (see~\cite{rzkazewski2024polynomial}).  Also the question of solving several algorithmic problems in non-$\chi$ bounded classes attracted some attention (see~\cite{rzkazewski2024polynomial}).

More insight about other classical constructions giving non-$\chi$-bounded classes is therefore of interest. Here, we study the two first constructions of triangle-free graphs of arbitrarily large chromatic number, defined respectively by Zykov~\cite{Zyk49} in 1949 and Blanche Descartes~\cite{descartes1947three,descartes1954solution} in 1954. We view them as hereditary classes, defined by taking all induced subgraphs of graphs occuring in the constructions. Our main result is a structural characterization of both Zykov and Blanche Descartes graphs (see Theorems~\ref{th:z} and~\ref{th:BD}). They imply that recognizing both classes is NP-complete (see Theorems~\ref{th:npc} and~\ref{th:BDnpc}), but tractable in polynomial time when parameterized by the treewidth of the input graph (see Theorems~\ref{thm:ZykovFPT} and~\ref{thm:BDFPT}). They also imply that some classical optimization problems are NP-hard for Zykov of Blanche Descartes graphs (see Theorems~\ref{thm:BDMIS} and \ref{thm:BDColoring}).  We should stress that our motivation is not really algorithmic. We rather view our results as meaningful regarding the structure of Zykov and Blanche Descartes graphs. For instance, we observe that every Blanche Descartes graph is a Zykov graph (see Lemma~\ref{l:ZiBD}).  Our results also allow easy constructions of non-Zykov and non-Blanche-Descartes graphs, which is not obvious. In particular we show that there exists non-Zykov graphs of arbitrarily large girth (see Theorem~\ref{thm:ZykovLargeGirth}). However, NP-completeness makes unlikely the existence of a nice description of minimal non-Zykov and minimal non-Blanche-Descartes graphs.

Before giving the formal definitions, let us briefly survey some other famous constructions of triangle-free graphs of high chromatic number. For the famous Mycielski's construction~\cite{mycielski1955coloriage}, the situation is simple: an easy induction shows that any triangle-free graph is an induced subgraph of some Mycielski graph. Thus, the hereditary class of induced subgraphs of Mycielski graphs is exactly the class of triangle-free graphs, which is obviously recognizable in polynomial time. For Burling graphs, several characterizations are given by Pournajafi and Trotignon~\cite{PT-Burling1}, and  recently, Rz{\k{a}}{\.z}ewski and Walczak~\cite{rzkazewski2024polynomial} proved that recognizing Burling graphs and computing a maximum stable set in a Burling graph are polynomial time tractable. The structure of Blanche Descartes graphs has been already investigated by Kostochka and Ne\v set\v ril~\cite{DBLP:journals/cpc/KostochkaN99}, but their results are of a completely different flavour. 

Let us now give all the formal definitions. 

\paragraph{Zykov graphs.} Zykov's construction is inductive. The graph $Z_1$ is the graph on one vertex. Suppose that $k\geq 1$ is an integer and that $Z_1$, \dots, $Z_k$ are all defined. Then $Z_{k+1}$ is obtained as follows:

\begin{itemize}
\item Take the disjoint union of $Z_1$, \dots, $Z_k$.
\item For each $k$-tuple of vertices $v_1\in V(Z_1)$, \dots, $v_k \in
  V(Z_k)$, add a vertex $v$ adjacent to $v_1$, \dots, $v_k$. 
\end{itemize}

For the proofs in the next sections, we denote by $S_{k+1}$ the set of
vertices added in the second item of the definition above.

For the sake of completeness, let us recall the main properties of
Zykov's construction. It is a routine matter to prove by induction
that for all $k\geq 1$, $Z_k$ is triangle-free and $k$-colorable. To
prove that at least $k$ colors are needed to color $Z_{k}$, the proof
is also by induction.  Suppose it is true for 1, \dots, $k$ and
suppose for a contradiction that $Z_{k+1}$ is $k$-colorable.  In a
$k$-coloring of $Z_{k+1}$, color 1 must be used in $Z_1$, color 2 in
$Z_2$, and so on (up to a permutation of colors).  Hence, there exists
a $k$-tuple of vertices $v_1\in V(Z_1)$, \dots, $v_k \in V(Z_k)$ that
uses $k$ distinct colors. One more color is needed for the vertex
adjacent to this $k$-tuple.  We proved that $Z_k$ is a triangle-free
graph of chromatic number $k$. 

A \emph{Zykov graph} is any graph isomorphic to an induced subgraph of
$Z_k$ for some integer $k \geq 1$. In Section~\ref{sec:z}, we prove our results about Zykov graphs. 

\paragraph{Blanche Descartes graphs.} Blanche Descartes's construction is also inductive. The graph $D_1$ is the graph on one vertex. Suppose that $k\geq 1$ is an integer and that $D_k$ is defined and has $n$ vertices.  Then $D_{k+1}$ is obtained as follows:

\begin{itemize}
\item Take a stable set $S$ on $k(n-1)+1$ new vertices.
\item For each $n$-tuple of vertices $T$ in $S$, add a copy of $D_k$ and a matching between $T$ and $D_k$. 
\end{itemize}

Let us recall the main properties of Blanche Descartes's construction. It is a routine matter to prove by induction that for all $k\geq 1$, $D_k$ is $(C_3, C_4, C_5)$-free and $k$-colorable. To prove that at least $k$ colors are needed to color $D_{k}$, the proof is also by induction. Suppose it is true for $1, \dots, k$ and suppose for a contradiction that $D_{k+1}$ is $k$-colorable.  By the pigeonhole principle, in a $k$-coloring of $D_{k+1}$, at least one $n$-tuple of vertices of $S$ must be monochromatic. Hence, the copy of $D_k$ that is matched to this $n$-tuple is colored with $k-1$ colors, a contradiction to the induction hypothesis. We proved that $D_k$ has chromatic number $k$.

Observe that as presented here, the construction is not deterministic because different matchings can lead to non-isomorphic graphs. More formally, $D_k$ should be defined as the set of graphs which can be constructed using this method after $k$ steps. To avoid heavy notation, along the paper we consider that $D_k$ is one of the Blanche Descartes graph that can be constructed after $k$ steps. We leave as an open question whether the characterization of Blanche Descartes graphs that we obtain is independent of the chosen matchings. 

A \emph{Blanche Descartes graph} is any graph isomorphic to an induced subgraph of any $D_k$ for some integer $k \geq 1$. In Section~\ref{sec:BD}, we prove our results about Blanche Descartes graphs.

\section{Zykov graphs}\label{sec:z}

In this section, we start by providing a characterization of Zykov graphs based on certain special stable sets. Using this characterization, we derive the main result: the proof that recognizing Zykov graphs is NP-complete.

\subsection{Characterization of Zykov graphs}

A stable set $A$ in some graph $G$ is \emph{splitting} if every vertex
of $A$ has at most one neighbor in each connected component of
$G\sm A$. Note that every graph contains a splitting stable set: the
empty set. Note also that $G$ contains a non-empty splitting stable
set if and only if at least one of its connected component does.

\begin{lemma}
  \label{lemma:z}
  For all graphs $G$, the following conditions are equivalent:

  \begin{enumerate}
  \item\label{i:z} $G$ is a Zykov graph.
   \item\label{i:as} Every induced subgraph $H$ of $G$ contains a
    non-empty splitting stable set $A$ such that $H \setminus A$ is a Zykov
    graph.
  \item\label{i:es} $G$ contains a non-empty splitting stable set $A$ such that
    $G \setminus A$ is a Zykov graph.
  \end{enumerate}
\end{lemma}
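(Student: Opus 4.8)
My plan is to prove $(2)\Rightarrow(3)\Rightarrow(1)\Rightarrow(2)$, after first recording a few elementary facts about Zykov's construction (throughout I regard the empty graph as a Zykov graph): being a Zykov graph is hereditary; $Z_j$ is an induced subgraph of $Z_{j+1}$, hence $Z_i\subseteq Z_j$ whenever $i\le j$; a finite disjoint union of $t$ Zykov graphs is a Zykov graph (embed the summands into $t$ distinct parts $Z_{i_1},\dots,Z_{i_t}$ of a large enough $Z_N$); and $S_k$ is a stable set of $Z_k$ each of whose vertices has exactly one neighbour in each of $Z_1,\dots,Z_{k-1}$. The implication $(2)\Rightarrow(3)$ is then immediate, by taking $H=G$.

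For $(1)\Rightarrow(2)$, since induced subgraphs of Zykov graphs are Zykov, it suffices to prove that every non-empty Zykov graph $H$ admits a non-empty splitting stable set $A$ with $H\setminus A$ a Zykov graph, and then to apply this to every (non-empty) induced subgraph of $G$. Because a graph has a non-empty splitting stable set if and only if one of its components does, and deleting such a set inside a single component leaves the other components — each of them a Zykov graph — untouched, I may assume $H$ connected. Let $k$ be least with $H\subseteq Z_k$. If $k=1$, take $A=V(H)$. If $k\ge2$, fix an embedding of $H$ into $Z_k=Z_1\sqcup\dots\sqcup Z_{k-1}\sqcup S_k$: were $H$ to avoid $S_k$, connectedness would place it inside a single $Z_i$ with $i\le k-1$, hence inside $Z_{k-1}$, contradicting the minimality of $k$. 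So $A:=V(H)\cap S_k\ne\emptyset$; it is stable, $H\setminus A$ is an induced subgraph of the Zykov graph $Z_1\sqcup\dots\sqcup Z_{k-1}$ (hence a Zykov graph), and since each component of $H\setminus A$ lies inside one $Z_i$ while each vertex of $A$ has at most one neighbour in each $Z_i$, the set $A$ is splitting.

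The implication $(3)\Rightarrow(1)$ is the technical core. Let $A$ be a non-empty splitting stable set of $G$ with $G\setminus A$ a Zykov graph, and let $C_1,\dots,C_m$ be the components of $G\setminus A$; each $C_j$ is a Zykov graph, say $C_j\subseteq Z_{k_j}$. I choose $N$ large and pairwise distinct indices $i_1,\dots,i_m<N$ with $i_j>k_j$, and embed $C_j$ into the copy of $Z_{k_j}$ lying inside the part $Z_{i_j}$ of $Z_N$; since $i_j>k_j$, that part is strictly larger than the image of $C_j$, so it contains a vertex $d_j$ outside this image. Then I map each $a\in A$ to the vertex of $S_N$ whose prescribed neighbour in $Z_{i_j}$ is the image of the unique neighbour of $a$ in $C_j$ when such a neighbour exists, and is $d_j$ otherwise, and whose neighbours in the parts not among $Z_{i_1},\dots,Z_{i_m}$ are arbitrary. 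One checks this is an isomorphism from $G$ onto an induced subgraph of $Z_N$: on either side there are no edges inside $A$ nor between distinct $C_j$'s, each $C_j\hookrightarrow Z_{i_j}$ is induced, and the choice of the $d_j$'s makes the adjacencies between $A$ and $G\setminus A$ be reproduced exactly, with none created spuriously (the unused parts carry no vertex of the image). To make the map injective on $A$, I reserve one further unused part $Z_{i^*}$ with $|V(Z_{i^*})|\ge|A|$ — available for $N$ large, since $|V(Z_t)|\to\infty$ — and use its vertices to separate the vertices of $A$.

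I expect the two genuinely non-obvious points to be: in $(1)\Rightarrow(2)$, noticing that the reduction to \emph{connected} graphs is necessary, because an embedding of $H$ into its minimal $Z_k$ need not meet $S_k$ (for instance $Z_1\sqcup Z_2$ embeds into $Z_3$ while avoiding $S_3$); and in $(3)\Rightarrow(1)$, the bookkeeping that makes the image of $A$ in $S_N$ induce exactly the bipartite adjacency between $A$ and $G\setminus A$ — this is what forces the parts $Z_{i_j}$ to be strictly larger than the images of the $C_j$'s and forces the extra part used for injectivity. Everything else should be routine verification.
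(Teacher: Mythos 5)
Your proof is correct and takes essentially the same route as the paper: for (1)$\Rightarrow$(2) you extract $V(H)\cap S_k$ from a minimal $k$ for a connected $H$, and for (3)$\Rightarrow$(1) you re-embed the components of $G\setminus A$ into distinct parts of a sufficiently large $Z_N$ and realize each $a\in A$ as a vertex of $S_N$ via a suitable tuple, using a spare vertex $d_j$ when $a$ has no neighbour in $C_j$ (the paper's ``$Z_j\setminus C_j$'' choice) and a reserved part to force injectivity (the paper's distinct vertices $b_i$ in $Z_k$). Only the bookkeeping of indices differs.
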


\begin{proof}
  To prove that~\eqref{i:z} implies~\eqref{i:as}, it is enough to
  prove that every connected induced subgraph $H$ of some graph $Z_k$
  contains a non-empty splitting stable set (its removal yields a
  Zykov graph because Zykov graphs form a hereditary class by
  definition).  So, let $H$ be a connected subgraph of some graph
  $Z_k$ and suppose that $k$ is minimal with respect to this property.
  By the minimality of $k$, $H$ contains some vertices of the set
  $S_k$ used to construct $Z_k$ from $Z_1$, \dots, $Z_{k-1}$ as in the
  definition.  Hence, $V(H)\cap S_k$ is a non-empty splitting stable
  set of $H$, so Condition~\eqref{i:as} holds.

  Clearly, \eqref{i:as} implies~\eqref{i:es}.

  To prove that~\eqref{i:es} implies~\eqref{i:z}, suppose that $G$
  contains a non-empty splitting stable set $A=\{a_1, \dots, a_n\}$
  such that $G\sm A$ is a Zykov graph.  Our goal is to find a large
  enough integer $m$ such that $G$ can be viewed as an induced
  subgraph of $Z_m$.

  Let $C_1$, \dots, $C_\ell$ be the connected components of $G\sm A$.
  Let $k$ be an integer large enough to satisfy the following
  properties: $|V(Z_k)|\geq |A|$ and each component of $G\sm A$ is an
  induced subgraph of $Z_{k}$ (this is possible since $G\sm A$ is a
  Zykov graph by assumption).  We claim that $G$ is isomorphic to an
  induced subgraph of $Z_{k+\ell+1}$.  Recall that $Z_{k+\ell+1}$ is
  obtained from the disjoint union of $Z_1$, \dots, $Z_{k+\ell}$.
  Since $|V(Z_k)|\geq |A|$, we may consider distinct vertices
  $b_1, \dots, b_n$ in $Z_k$.  We view each component $C_i$ as an
  induced subgraph of $Z_{k+i}$.  For each vertex $a_i$, we define a
  $(k+\ell)$-tuple $T_i = (c_{i, 1}, \dots, c_{i, k+\ell})$ of
  vertices with $c_{i, j}\in V(Z_j)$ for $j=1, \dots, k+\ell$.  For
  $j=1, \dots, k-1$, we choose $c_{i, j}$ to be an arbitraty vertex in
  $Z_j$. For $j=k$, we choose $c_{i, j}=b_i$.  For
  $j=k+1, \dots, k+\ell$, if $a_i$ has a neighbor $a_{i, j}$ in $C_j$,
  we set $c_{i, j} = a'_{i, j}$ and if $a_i$ has no neighbor in $C_j$,
  we choose $c_{i, j}$ to be any vertex from $Z_j\sm C_j$ (note that
  $Z_j\sm C_j\neq \emptyset$ since $j>k$). We now view $a_i$ as the
  vertex associated to the $(k+\ell)$-tuple $T_i$.  Note that the
  vertices $c_{i, k}$ for $i=1, \dots, n$ guaranty that all $T_i$'s
  are distinct. This proves that $G$ is an induced subgraph of
  $Z_{k+\ell + 1}$.
\end{proof}

\begin{theorem}
  \label{th:z}
  A graph $G$ is a Zykov graph if and only if all induced subgraphs
  of $G$ contain a non-empty splitting stable set.
\end{theorem}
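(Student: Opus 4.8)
The plan is to read this off directly from Lemma~\ref{lemma:z}, of which the theorem is the ``global'' reformulation: condition~\eqref{i:as} of that lemma carries an extra clause (``$H\sm A$ is a Zykov graph'') that we want to show is automatic once every induced subgraph merely has \emph{some} non-empty splitting stable set.

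For the forward direction, suppose $G$ is a Zykov graph. Then Lemma~\ref{lemma:z} gives that~\eqref{i:z} implies~\eqref{i:as}, so every induced subgraph $H$ of $G$ contains a non-empty splitting stable set; dropping the clause ``$H\sm A$ is a Zykov graph'' only weakens the conclusion, so this is exactly what we want.

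For the converse, suppose every induced subgraph of $G$ contains a non-empty splitting stable set. I would proceed by induction on $|V(G)|$. If $|V(G)|\le 1$, then $G$ is an induced subgraph of $Z_1$ and hence a Zykov graph. Otherwise, applying the hypothesis to $G$ itself yields a non-empty splitting stable set $A$ of $G$. Since $A\neq\emptyset$, the graph $G\sm A$ has strictly fewer vertices than $G$; moreover every induced subgraph of $G\sm A$ is an induced subgraph of $G$, hence also contains a non-empty splitting stable set. By the induction hypothesis, $G\sm A$ is a Zykov graph, so $G$ together with the splitting stable set $A$ satisfies condition~\eqref{i:es} of Lemma~\ref{lemma:z}; therefore $G$ is a Zykov graph.

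There is essentially no obstacle here: the substantive work lies entirely in Lemma~\ref{lemma:z}, and in particular in its implication~\eqref{i:es}$\Rightarrow$\eqref{i:z}, which embeds $G$ into a suitable $Z_{k+\ell+1}$. The only point worth noting is that the property ``every induced subgraph contains a non-empty splitting stable set'' is itself hereditary, which is what makes the induction on $|V(G)|$ go through cleanly.
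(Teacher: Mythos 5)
Your proof is correct and follows essentially the same route as the paper: the forward direction is read off from condition~\eqref{i:as} of Lemma~\ref{lemma:z}, and the converse is an induction on $|V(G)|$ using condition~\eqref{i:es}. Your explicit remark that the hypothesis is hereditary (so it passes to $G\sm A$) is a small point the paper leaves implicit, but the argument is the same.
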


\begin{proof}
  If $G$ is a Zykov graphs, the conclusion holds by
  Condition~\eqref{i:as} of Lemma~\ref{th:z}. Conversely, suppose that
  $G$ is such that all induced subgraphs of $G$ contain a non-empty
  splitting stable set.  Let us prove by induction on $|V(G)|$ that
  $G$ is a Zykov graph. If $|V(G)|=1$, $G=Z_1$ is a Zykov graph. If
  $|V(G)|>1$, then by assumption $G$ contains a non-empty splitting
  stable set $A$.  By the induction hypothesis, $G\sm A$ is a Zykov
  graph.  Hence, by Condition~\eqref{i:es} of Lemma~\ref{lemma:z}, $G$ 
  is a Zykov graph.
\end{proof}

\paragraph{Some properties of Zykov graphs.} We state now several simple observations, needed in the sequel or possibly useful for a further study of the structure of Zykov graphs.

\begin{lemma}
  \label{l:subdiv}
  The class of Zykov graphs is closed under subdividing edges.
\end{lemma}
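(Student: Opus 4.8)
The plan is to use Theorem~\ref{th:z}: it suffices to show that if every induced subgraph of $G$ has a non-empty splitting stable set, then so does every induced subgraph of $G'$, where $G'$ is obtained from $G$ by subdividing a single edge $uv$ once, introducing a new vertex $w$ of degree two adjacent to $u$ and $v$. Since any induced subgraph of $G'$ is obtained from some induced subgraph of $G$ either by doing nothing (if $w$ is absent or only one of $u,v$ is present, in which case the subgraph is an induced subgraph of $G$ with possibly a pendant vertex attached) or by the analogous single subdivision, it is enough to handle $G'$ itself and argue that the property is inherited; iterating then gives closure under subdividing any number of edges, one at a time.

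First I would fix a non-empty splitting stable set $A$ of $G$ and explain how to produce one in $G'$. The natural candidate is to keep $A$ essentially unchanged, adding $w$ to it when convenient. The key observation is that subdividing $uv$ cannot merge connected components of $G \sm A$ and can only split the component(s) containing $u$ or $v$ or leave them intact; more precisely the components of $G' \sm A$ (when $w \notin A$) are: all components of $G\sm A$ not meeting $\{u,v\}$, unchanged; plus, if $u,v$ lie in the same component of $G\sm A$, that component with $w$ inserted on the $uv$ edge (still connected), and if they lie in different components, those two components joined through $w$ into one. In either case every vertex of $A$ that had at most one neighbour per component of $G\sm A$ still has at most one neighbour per component of $G'\sm A$, because no vertex of $A$ is adjacent to the new vertex $w$, and the neighbourhoods in $G$ of vertices of $A$ are unchanged — the only subtlety is that two old components could be glued, so a vertex of $A$ with one neighbour in each of two glued components would now have two neighbours in the merged component. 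This is precisely the obstacle.

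To get around it I would choose $A$ more carefully, or enlarge it. If $A$ is a non-empty splitting stable set of $G$ and some $a\in A$ has a neighbour in both the component of $u$ and the component of $v$ (the bad case, which only arises when $u,v$ are in distinct components of $G\sm A$ and those two become glued), then instead I would use $A' = A \cup \{w\}$: in $G' \sm A'$ the vertex $w$ is deleted, so the two old components are \emph{not} glued, every component of $G'\sm A'$ is a component of $G\sm A$ (those meeting $\{u,v\}$ possibly with $u$ or $v$ still present but with the edge $uv$ gone — which does not connect anything new), and thus $A'$ is splitting provided $w$ itself has at most one neighbour in each component of $G'\sm A'$. But $w$ has exactly the two neighbours $u$ and $v$, which lie in different components of $G'\sm A' $ (by hypothesis they were in different components of $G\sm A$, and removing the edge $uv$ — which is not present in $G'$ anyway — and removing $w$ keeps them separated). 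Hence $w$ has at most one neighbour per component, $A'$ is a non-empty splitting stable set, and $G'\sm A' = G\sm A$ is an induced subgraph of $G$. In the good case ($u,v$ in the same component of $G\sm A$, or no vertex of $A$ straddling the two components) we simply keep $A$ itself, which is splitting in $G'$ by the discussion above. Finally I would note that the same case analysis applies verbatim to every induced subgraph $H'$ of $G'$: either $w\notin V(H')$ or at most one of $u,v$ is in $H'$, in which case $H'$ is (isomorphic to) an induced subgraph of $G$ with possibly an isolated-from-the-subdivision pendant vertex and hence has a non-empty splitting stable set by hypothesis on $G$; or $w,u,v\in V(H')$, and then $H'$ is the one-edge subdivision of the induced subgraph $H = H' \sm w$ (plus the edge $uv$) of $G$, so the argument above applied to $H$ produces the desired set. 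By Theorem~\ref{th:z}, $G'$ is a Zykov graph, and iterating over edges one at a time finishes the proof. The main obstacle, as indicated, is the component-merging phenomenon; the trick of throwing the subdivision vertex into the stable set neutralises it cleanly.
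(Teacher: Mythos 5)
Your overall strategy (verify the criterion of Theorem~\ref{th:z} for every induced subgraph of the one-edge subdivision $G'$, then iterate) is viable, but two of the steps fail as written. First, your justification for the "good case" rests on the assertion that "no vertex of $A$ is adjacent to the new vertex $w$" and that neighbourhoods of vertices of $A$ are unchanged; this is false when $u\in A$ or $v\in A$ (then $w$ is adjacent to a vertex of $A$, and that vertex's neighbourhood changes, $v$ being replaced by $w$). The conclusion still holds in that case -- the lost neighbour $v$ and the gained neighbour $w$ lie in the same component of $G'\setminus A$ -- but this case is simply not covered by your dichotomy "same component / different components of $G\setminus A$", since a vertex of $A$ lies in no such component. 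Second, in your final reduction of an arbitrary induced subgraph $H'$ of $G'$, the sub-case $w\notin V(H')$ with \emph{both} $u,v\in V(H')$ is misidentified: there $H'$ is $G[V(H')]$ \emph{minus the edge} $uv$, which is in general not an induced subgraph of $G$ (already for $G=C_4$ with $uv$ an edge one gets $P_4$, not an induced subgraph of $C_4$). To close this you need the extra observation that a splitting stable set of $G[V(H')]$ remains splitting after deleting an edge (deletion only refines components and shrinks neighbourhoods) -- which is essentially the paper's entire proof: it simply notes that for every $X\subseteq V(G)$, a splitting stable set of $G[X]$ is still one in the corresponding subgraph of the subdivision, with no case analysis at all.

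A further remark: the "bad case" you devote most effort to (two components of $G\setminus A$ glued through $w$, cured by taking $A\cup\{w\}$) is vacuous. Since $uv$ is an edge of $G$, if neither $u$ nor $v$ lies in $A$ they are adjacent in $G\setminus A$ and hence already in the same component; $u$ and $v$ can lie in "different components" only when one of them is in $A$, which is exactly the case your argument leaves out. So the trick of adding $w$ to the stable set is never needed, while the genuinely needed cases (an endpoint of the subdivided edge in $A$, and induced subgraphs omitting $w$ but containing both endpoints) are the ones left without a correct argument. Both are easy to repair, but as it stands the proof has gaps precisely there.
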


\begin{proof}
  Let $H$ be obtained from some Zykov graph $G$ by subdividing
  edges. For every $X\subseteq V(G)$, if $A$ is splitting stable set
  of $G[X]$, then it is a splitting stable set of $H[X]$.  It follows
  from Theorem~\ref{th:z} that all induced subgraphs of $H$ have a
  non-empty splitting stable set. Hence, by Theorem~\ref{th:z} again,
  $H$ is a Zykov graph.
\end{proof}

\begin{lemma}
  The class of Zykov graphs is closed under taking subgraphs.
\end{lemma}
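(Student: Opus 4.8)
The plan is to deduce this from Theorem~\ref{th:z}: if $H$ is a subgraph of a Zykov graph $G$, it suffices to show that every induced subgraph of $H$ contains a non-empty splitting stable set. So I would fix such a $G$ and $H$, fix an arbitrary $X\subseteq V(H)$, and aim to produce a non-empty splitting stable set in $H[X]$.

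The first step is to transfer a splitting stable set from $G$. Since $V(H)\subseteq V(G)$ we also have $X\subseteq V(G)$, so $G[X]$ is an induced subgraph of the Zykov graph $G$, and by Theorem~\ref{th:z} it contains a non-empty splitting stable set $A$. I would then argue that this same set $A$ is a non-empty splitting stable set of $H[X]$: it is still non-empty, and it is still stable, because $H[X]$ is obtained from $G[X]$ by deleting edges only.

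The second step is to check that the splitting property survives the edge deletions. Deleting edges can only break a connected component of $G[X]\sm A$ into smaller pieces, so every connected component $C$ of $H[X]\sm A$ is contained in some connected component $C'$ of $G[X]\sm A$. Since $A$ is splitting in $G[X]$, any $a\in A$ has at most one neighbor in $C'$ in the graph $G[X]$, hence (as $V(C)\subseteq V(C')$ and $H[X]$ has no edge not present in $G[X]$) at most one neighbor in $C$ in the graph $H[X]$. This establishes the claim, and applying Theorem~\ref{th:z} to $H$ then finishes the proof.

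The argument is short and I do not expect a genuine obstacle; the only point requiring a little care is the observation that passing from $G[X]$ to $H[X]$ only refines the component structure of the graph obtained by deleting $A$ and only removes adjacencies, so neither operation can violate the ``at most one neighbor in each component'' requirement. This mirrors the proof of Lemma~\ref{l:subdiv}, where subdividing edges likewise preserves splitting stable sets.
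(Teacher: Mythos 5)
Your argument is correct: the key observation, that a splitting stable set of $G[X]$ remains one in $H[X]$ when $H$ is obtained from $G$ by deleting edges (stability survives, components only refine, and neighborhoods only shrink), is sound, and combined with Theorem~\ref{th:z} in both directions it does prove the lemma. It is, however, not the route the paper takes for this statement. The paper does not re-examine splitting stable sets at all: it observes that deleting an edge $uv$ can be simulated by subdividing that edge once and then deleting the subdivision vertex, so closure under subgraphs follows immediately from Lemma~\ref{l:subdiv} (closure under subdivision) together with the fact that Zykov graphs form a hereditary class by definition. Your proof instead establishes directly the edge-deletion analogue of Lemma~\ref{l:subdiv}, with essentially the same verification that the paper uses to prove that lemma (you note the parallel yourself). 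The trade-off: the paper's reduction is a two-line deduction from already-proved facts, while yours is self-contained at the level of Theorem~\ref{th:z} and makes explicit the slightly stronger statement that edge deletion preserves splitting stable sets of every induced subgraph, which is a reusable observation in its own right.
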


\begin{proof}
  This follows from Lemma~\ref{l:subdiv} and the fact that a subgraph
  of some graph $G$ can be obtained from $G$ by a sequence of
  subdivision of edges et delition of vertices.
\end{proof}

We now give tools to prove that some graphs are not Zykov graphs.

\begin{lemma}
  \label{l:2c}
  If $C$ is a cycle of some graph $G$, then every splitting stable set
  $A$ of $G$ contains either no vertex of $C$, or at least two
  vertices of $C$.
\end{lemma}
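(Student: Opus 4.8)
The plan is to argue by contradiction: suppose $A$ is a splitting stable set that contains exactly one vertex of $C$, say $a$. Since $A$ is a stable set, the two neighbours of $a$ on the cycle $C$, call them $x$ and $y$, are not in $A$, and in fact (since $a$ is the only vertex of $C$ in $A$) the entire cycle minus $a$, that is the path $P = C - a$ with endpoints $x$ and $y$, lies in $G \setminus A$. A path is connected, so $P$ is contained in a single connected component $K$ of $G \setminus A$.

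Now I would simply count the neighbours of $a$ inside $K$. The vertices $x$ and $y$ are both neighbours of $a$ (they are consecutive with $a$ on $C$), they are distinct (a cycle has at least three vertices, so $x \neq y$), and both lie in $K$. Hence $a$ has at least two neighbours in the connected component $K$ of $G \setminus A$, which directly contradicts the definition of a splitting stable set (every vertex of $A$ has at most one neighbour in each component of $G \setminus A$).

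The only mild subtlety, and the one point worth stating carefully, is the claim that $P = C - a$ is connected and therefore sits inside one component of $G \setminus A$; this uses that $A \cap V(C) = \{a\}$, so removing $a$ from $C$ leaves a path (not several pieces) and no other vertex of that path gets deleted when we pass to $G \setminus A$. Given that, there is no real obstacle — the result is essentially immediate from the definitions. One could phrase it even more slickly: if $A$ met $C$ in exactly one vertex $a$, then $C$ would be an odd-or-even cycle through $a$ whose two sides both land in the same component, forcing two neighbours of $a$ there; but the contradiction-counting phrasing above is the cleanest.
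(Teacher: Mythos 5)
Your argument is correct and is essentially the paper's own proof: the paper likewise observes that if $A$ met $C$ in a single vertex $v$, then $v$ would have two neighbours in the component of $G \setminus A$ containing $C \setminus \{v\}$, contradicting the definition of a splitting stable set. Your write-up just makes explicit the (easy) point that $C - v$ is a connected path lying in one component.
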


\begin{proof}
  If $A$ contains a unique vertex $v$ of $C$, then $v$ has at least
  two neighbors in the connected component of $G\sm A$ that contains
  $C\sm \{v\}$, a contradiction to the definition of a splitting
  stable set.
\end{proof}

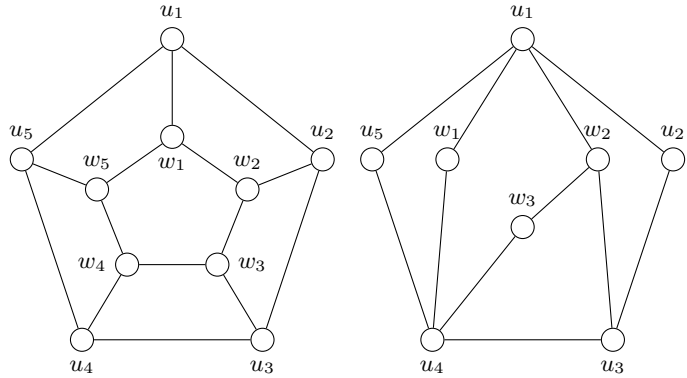
\begin{figure}
\centering
\begin{tikzpicture}
\node[draw, circle] (1) at (0,1.7) [label=below:$w_1$]{};
\node[draw, circle] (2) at (1,1) [label=$w_2$]{};
\node[draw, circle] (3) at (0.6,0) [label=right:$w_3$]{};
\node[draw, circle] (4) at (-0.6,0) [label=left:$w_4$]{};
\node[draw, circle] (5) at (-1,1) [label=$w_5$]{};

\draw (1) -- (2) ;
\draw (2) -- (3) ;
\draw (3) -- (4) ;
\draw (4) -- (5) ;
\draw (5) -- (1) ;

\node[draw, circle] (6) at (0,3) [label=$u_1$]{};
\node[draw, circle] (7) at (2,1.4) [label=$u_2$]{};
\node[draw, circle] (8) at (1.2,-1) [label=below:$u_3$]{};
\node[draw, circle] (9) at (-1.2,-1) [label=below:$u_4$]{};
\node[draw, circle] (10) at (-2,1.4) [label=$u_5$]{};

\draw (6) -- (7) ;
\draw (7) -- (8) ;
\draw (8) -- (9) ;
\draw (9) -- (10) ;
\draw (10) -- (6) ;

\draw (1) -- (6) ;
\draw (2) -- (7) ;
\draw (3) -- (8) ;
\draw (4) -- (9) ;
\draw (5) -- (10) ;

\end{tikzpicture}
\begin{tikzpicture}

\node[draw, circle] (1) at (-1,1.4) [label = $w_1$]{};
\node[draw, circle] (2) at (1,1.4) [label = $w_2$]{};
\node[draw, circle] (3) at (0,0.5) [label = $w_3$]{};

\node[draw, circle] (6) at (0,3) [label=$u_1$]{};
\node[draw, circle] (7) at (2,1.4) [label=$u_2$]{};
\node[draw, circle] (8) at (1.2,-1) [label=below:$u_3$]{};
\node[draw, circle] (9) at (-1.2,-1) [label=below:$u_4$]{};
\node[draw, circle] (10) at (-2,1.4) [label=$u_5$]{};

\draw (6) -- (7) ;
\draw (7) -- (8) ;
\draw (8) -- (9) ;
\draw (9) -- (10) ;
\draw (10) -- (6) ;
\draw (6) -- (1) ;
\draw (6) -- (2) ;
\draw (2) -- (3) ;
\draw (1) -- (9) ;
\draw (3) -- (9) ;
\draw (2) -- (8) ;

\end{tikzpicture}
   
  \caption{Graphs $F$ and $F'$\label{f:nz}}
\end{figure}

Lemma~\ref{l:2c} gives a simple criterion to prove that some graphs are
not Zykov. The graph $F$ represented in Figure~\ref{f:nz} is not
Zykov.  Indeed, if it were Zykov, it would contain by
Theorem~\ref{th:z} a non-empty splitting stable set $A$.  Up to
symmetry, $A$ must contain $u_1$, and therefore by a sequence of
applications of Lemma~\ref{l:2c}, $A$ must contain $w_2$, $u_3$, $w_4$ and
$u_5$, a contradiction.  By a similar argument one can easily check
that the graph $F'$ from the same figure is not Zykov.  It is also
easy to check that $F'$ is the smallest non-Zykov triangle-free graph.
Indeed, by Lemma~\ref{l:bip}, we know that such a graph must be
non-bipartite, and must therefore contain an odd cycle of length at
least~5. A brute-force check of all possible ways to attach two
vertices to a cycle of length~5 of to add chords to a cycle of
length~7 only yields graphs with a splitting stable set whose removal
yields a forest, so all triangle-free graphs on at most seven vertices
are Zykov graphs.

We state the next observation in a lemma for reference in the next section.

\begin{lemma}
  \label{l:h}
  If a graph $G$ contains the graph $H$ represented on
  Figure~\ref{f:npc-H} as an induced subgraph, then for every
  splitting stable set $A$ of $G$, $A\cap V(H) = \emptyset$ or
  $A\cap V(H) = \{a, a', a'', a'''\}$.
\end{lemma}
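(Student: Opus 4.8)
The plan is to reduce Lemma~\ref{l:h} to a purely finite check on the fixed graph $H$, with Lemma~\ref{l:2c} doing all the work. Let $A$ be any splitting stable set of $G$ and put $B = A \cap V(H)$. Since $H$ is an induced subgraph of $G$, the set $B$ is a stable set of $H$; moreover every cycle $C$ of $H$ is a cycle of $G$, and as $V(C) \subseteq V(H)$ we have $V(C) \cap A = V(C) \cap B$, so Lemma~\ref{l:2c} tells us that $|V(C) \cap B| \neq 1$ for every cycle $C$ of $H$. Hence it suffices to prove the following statement about $H$ alone: the only stable sets $B$ of $H$ such that every cycle of $H$ meets $B$ in $0$ or at least $2$ vertices are $\emptyset$ and $\{a, a', a'', a'''\}$.

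To prove this I would argue by forced propagation, exactly as in the analysis of the graphs $F$ and $F'$ above. Assume $B \neq \emptyset$ and fix a vertex $v \in B$; using the symmetries of $H$ we may restrict to a few choices of $v$. For each such $v$, I would pick a cycle of $H$ through $v$ chosen so that its other vertices are as constrained as possible: Lemma~\ref{l:2c} forces a second vertex of that cycle into $B$, and since $B$ is stable this second vertex is non-adjacent to $v$, which in $H$ pins it down (or leaves only a short branching). Each newly forced vertex again lies on cycles of $H$ to which the same reasoning applies, so the propagation continues; iterating it, one checks that $B$ is forced to contain $a$, $a'$, $a''$, $a'''$. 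Finally, the same tool excludes any further vertex: if $B$ contained some $w \in V(H) \setminus \{a, a', a'', a'''\}$, then either $w$ would be adjacent to one of $a, a', a'', a'''$ (contradicting stability) or some cycle of $H$ through $w$ would meet $B$ in exactly one vertex (contradicting Lemma~\ref{l:2c}); this is again a finite check over the finitely many candidate vertices $w$.

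The main obstacle is bookkeeping rather than ideas: one has to pick, for each vertex of $H$, a cycle (or a short list of cycles) along which the forcing is essentially deterministic, so that the case analysis does not branch uncontrollably, and then verify that the resulting constraints are jointly satisfied by exactly the two sets $\emptyset$ and $\{a, a', a'', a'''\}$. Since $H$ is a fixed small graph this is a routine (if slightly tedious) verification, of the same nature as the arguments already given for $F$ and $F'$. Note that we do not need to check that $\{a, a', a'', a'''\}$ actually arises as the trace of a splitting stable set of $G$: the lemma only asserts that no other nonempty trace is possible, which is precisely what the finite check above delivers.
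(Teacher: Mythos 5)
Your proposal is correct and takes essentially the same route as the paper, whose entire proof is ``Clear by several applications of Lemma~\ref{l:2c}'': you reduce to the finite check that the only stable sets of $H$ meeting every cycle of $H$ in $0$ or at least $2$ vertices are $\emptyset$ and $\{a,a',a'',a'''\}$, which is exactly the forcing argument the paper has in mind (and which does go through, using the two $4$-cycles, the four $5$-cycles through the edge $vv''$, and stability). The only difference is presentational: you make explicit the reduction to cycles lying inside $H$ and the role of stability, which the paper leaves implicit.
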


\begin{proof}
  Clear by several applications of Lemma~\ref{l:2c}. 
\end{proof}

\begin{figure}
\centering
\begin{tikzpicture}

\node[draw,circle, fill =red] (0) at (0,0) [label = left:$v$]{};
\node[draw,circle, fill =red] (1) at (2,0) [label = above:$v'$]{};
\node[draw,circle, fill =red] (2) at (4,0) [label = right:$v''$]{};
\node[draw,circle, fill =ForestGreen] (3) at (1,0.8) [label = above:$a$]{};
\node[draw,circle, fill =ForestGreen] (4) at (1,-0.8) [label = above:$a'$]{};
\node[draw,circle, fill =ForestGreen] (5) at (3,0.8) [label = above:$a''$]{};
\node[draw,circle, fill =ForestGreen] (6) at (3,-0.8) [label = above:$a'''$]{};

\draw (0) -- (3) ;
\draw (0) -- (4) ;
\draw (3) -- (1) ;
\draw (4) -- (1) ;
\draw (1) -- (5) ;
\draw (1) -- (6) ;
\draw (5) -- (2) ;
\draw (6) -- (2) ;

\draw (0) to[out= 90, in = 180] (2,2) to[out= 0, in = 90] (2) ;
\end{tikzpicture}

  \caption{Graph $H$\label{f:npc-H}}
\end{figure}


\paragraph{On specific graph classes.} The well-known Gy\'arf\'as-Sumner conjecture states that any hereditary class of graphs that forbids a tree is $\chi$-bounded. It is already established that Zykov graphs are not counterexamples to this conjecture, implying that every tree appears in the Zykov construction. Theorem~\ref{th:z} provides a direct proof of this fact, and we can further strengthen the result by showing that any bipartite graph is a Zykov graph.

\begin{lemma}
  \label{l:fz}
  All forests are Zykov graphs.
\end{lemma}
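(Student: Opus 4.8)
The plan is to use the characterization from Theorem~\ref{th:z}: it suffices to show that every induced subgraph of a forest contains a non-empty splitting stable set. Since an induced subgraph of a forest is again a forest, and since a graph has a non-empty splitting stable set as soon as one of its connected components does, it is enough to prove that every tree with at least one vertex contains a non-empty splitting stable set.

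For a tree $T$, the natural candidate is a single well-chosen vertex, or the set of leaves. First I would observe that if $T$ has exactly one vertex, that vertex forms a splitting stable set. If $T$ has at least two vertices, take the set $L$ of all leaves of $T$: it is nonempty and stable (two leaves of a tree on at least two vertices are nonadjacent unless $T=K_2$, in which case either leaf alone works, so handle $K_2$ separately or just take one leaf). The key point is then that $L$ is splitting: each leaf $v \in L$ has exactly one neighbor in $T$, hence at most one neighbor in any connected component of $T \sm L$, so the defining condition of a splitting stable set is trivially satisfied. Thus $L$ (or a single vertex in the degenerate cases) is a non-empty splitting stable set of $T$.

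Putting it together: for an arbitrary forest $G$, every induced subgraph $H$ of $G$ is a forest, so it has a connected component which is a tree on at least one vertex, which by the above contains a non-empty splitting stable set, which is then a non-empty splitting stable set of $H$. By Theorem~\ref{th:z}, $G$ is a Zykov graph.

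There is essentially no obstacle here; the only thing to be slightly careful about is the edge case of components isomorphic to $K_1$ or $K_2$, where "the set of leaves" needs the mild reinterpretation above (a single vertex), but this is immediate. Alternatively, one could phrase the whole argument with a single pendant vertex and avoid leaf-set bookkeeping entirely, at the cost of needing to know every tree on at least two vertices has a vertex of degree one — which is standard.

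\begin{proof}
  By Theorem~\ref{th:z}, it suffices to show that every induced
  subgraph of a forest contains a non-empty splitting stable
  set. Since an induced subgraph of a forest is a forest, and since a
  graph has a non-empty splitting stable set as soon as one of its
  connected components does, it is enough to check that every tree $T$
  on at least one vertex contains a non-empty splitting stable set. If
  $|V(T)| = 1$, then $V(T)$ itself is a non-empty splitting stable
  set. Otherwise, let $v$ be a leaf of $T$. Then $\{v\}$ is a
  non-empty stable set, and $v$ has a unique neighbor in $T$, hence at
  most one neighbor in every connected component of $T \sm \{v\}$, so
  $\{v\}$ is splitting. In all cases $T$ has a non-empty splitting
  stable set, and therefore so does every induced subgraph of a
  forest. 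By Theorem~\ref{th:z}, every forest is a Zykov graph.
\end{proof}
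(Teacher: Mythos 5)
Your proof is correct and follows essentially the same route as the paper: both reduce, via Theorem~\ref{th:z}, to the observation that a single vertex of degree at most one (a leaf or isolated vertex, which every nonempty forest has) forms a non-empty splitting stable set. The only difference is presentational — the paper states this in one line without the component/leaf-set discussion, which your argument ultimately discards anyway in favour of a single leaf.
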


\begin{proof}
  Forests are Zykov graphs by Theorem~\ref{th:z} because a vertex of
  degree at most~1 in any graph forms a splitting stable set.
\end{proof}

\begin{lemma}
  \label{l:bip}
  All bipartite graphs are Zykov graphs. 
\end{lemma}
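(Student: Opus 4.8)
The plan is to use the characterization from Theorem~\ref{th:z}: to show that a bipartite graph $G$ is a Zykov graph, it suffices to show that every induced subgraph of $G$ contains a non-empty splitting stable set. Since an induced subgraph of a bipartite graph is bipartite, it is enough to prove the following: \emph{every bipartite graph with at least one vertex contains a non-empty splitting stable set}. I would prove this directly by exhibiting such a set.

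First I would reduce to the connected case, since (as noted in the excerpt) a graph has a non-empty splitting stable set if and only if one of its connected components does, and a connected bipartite graph on one vertex trivially has one. So let $G$ be connected bipartite with bipartition $(X,Y)$, both non-empty. The natural candidate is one of the two sides, say $X$. The components of $G\sm X$ are then isolated vertices (subsets of $Y$), so each vertex of $X$ automatically has at most one neighbor in each component of $G\sm X$ — in fact exactly because each such component is a single vertex. Hence $X$ is a splitting stable set, and it is non-empty. This already settles it with essentially no work, which suggests the ``hard part'' is really just recognizing that the bipartition side works; the key observation is that when $G\sm A$ is edgeless, the splitting condition is vacuous.

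Then I would assemble the argument: given any bipartite $G$ and any induced subgraph $H$, the graph $H$ is bipartite, and if $H$ is non-empty then some component of $H$ is a non-empty connected bipartite graph, whose smaller (or either) colour class is a non-empty splitting stable set of that component, hence of $H$. By Theorem~\ref{th:z}, $G$ is a Zykov graph. I expect no real obstacle here; the only thing to be careful about is the degenerate cases (single vertex, or a side of the bipartition being empty, which only happens if $G$ has no edges, in which case any single vertex is a splitting stable set), and to phrase the component reduction cleanly so that ``non-empty'' is preserved. One could alternatively note that Lemma~\ref{l:fz} already covers the case where $H$ is a forest, and for non-forests use the colour-class argument, but the uniform colour-class argument is cleaner and subsumes Lemma~\ref{l:fz}.
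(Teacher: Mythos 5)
Your proof is correct and rests on the same key observation as the paper's: a colour class of a bipartite graph is a splitting stable set because its removal leaves an edgeless graph, so the splitting condition is vacuous. The only (inessential) difference is in the wrap-up — the paper peels one side once and concludes via Condition~\eqref{i:es} of Lemma~\ref{lemma:z} together with Lemma~\ref{l:fz}, whereas you verify the hereditary condition of Theorem~\ref{th:z} directly for all induced subgraphs, which as you note subsumes Lemma~\ref{l:fz}.
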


\begin{proof}
  Any side of the bipartition of a bipartite graph forms a splitting stable
  set whose removal yields a forest. Hence, the conclusion holds by
  Condition~\eqref{i:es} of Lemma~\ref{lemma:z} and Lemma~\ref{l:fz}.
\end{proof}

\begin{lemma}
  \label{l:allSub}
  Let $G$ be any graph (possibly not Zykov). If $H$ is obtained by
  subdividing all edges of $G$ at least once, then $H$ is a Zykov
  graph.
\end{lemma}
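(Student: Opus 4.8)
The plan is to apply the characterization from Theorem~\ref{th:z}: it suffices to show that every induced subgraph of $H$ contains a non-empty splitting stable set. So let $H'$ be an induced subgraph of $H$. Recall that the vertex set of $H$ is $V(G) \cup W$, where $W$ is the set of subdivision vertices, and each $w \in W$ has degree at most $2$ in $H$, with both its neighbors lying in $V(G)$. The key structural fact is that $W$ is a stable set in $H$, and more is true: every vertex of $W$ has its entire neighborhood in $V(G)$, so in $H$ the set $W$ is a stable set whose removal leaves exactly $G$ (as an edgeless-on-$W$ picture — precisely, $H \setminus W = \overline{K}_{|V(G)|}$ has no edges, since every original edge got subdivided).

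First I would handle the trivial case: if $V(H') \cap W = \emptyset$, then $H' \subseteq G$ restricted to original vertices, but then $H'$ has no edges at all (every original edge was subdivided, so two original vertices are non-adjacent in $H$), hence $V(H')$ itself is a non-empty splitting stable set of $H'$ (assuming $H'$ is non-empty; the empty graph case is vacuous or can be excluded as in the paper). Next, suppose $V(H') \cap W \neq \emptyset$. Set $A = V(H') \cap W$. Then $A$ is a stable set of $H'$ since $W$ is stable in $H$. I claim $A$ is splitting in $H'$: take any $w \in A$ and any connected component $C$ of $H' \setminus A$. All neighbors of $w$ in $H'$ lie in $V(G) \subseteq V(H') \setminus A$, and $w$ has at most two such neighbors in all of $H$, hence at most two in $H'$. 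The only thing to check is that $w$ does not have two neighbors in the same component $C$. Since $w$ has at most two neighbors total, the only dangerous case is when $w$ has exactly two neighbors $x, y$ in $H'$ and they lie in the same component of $H' \setminus A$.

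This is the step I expect to require the real argument. The point is that $x$ and $y$ are the two endpoints of the original edge $e \in E(G)$ that was subdivided (at least once) to produce $w$; since $e$ is subdivided \emph{at least once}, the path from $x$ to $y$ through $w$ in $H$ has length at least $2$, and if $e$ were subdivided more than once, $w$ is adjacent to at most one of $x, y$ and at most one further subdivision vertex, which lies in $W \subseteq A$, so $w$ would have at most one neighbor outside $A$ — contradiction with having two. So the dangerous case forces $e$ to be subdivided exactly once, $w$ its unique subdivision vertex, and $x, y \in V(G)$. Now I would argue that any $x$–$y$ path $P$ in $H' \setminus A$ must alternate between $V(G)$ and $W \setminus A = \emptyset$ in the wrong way: every edge of $H$ with at least one endpoint in $V(G)$ has its other endpoint in $W$ (no edge of $H$ joins two vertices of $V(G)$, and no edge joins two vertices of $W$). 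Hence $H$, and therefore $H'$, is bipartite with parts $V(G)$ and $W$, so any $x$–$y$ walk in $H'$ has even length and passes through vertices of $W$; but $H' \setminus A = H' \setminus (V(H') \cap W)$ has no vertices of $W$ at all, so it induces a graph with no edges incident to... wait — more simply: $H' \setminus A$ has vertex set contained in $V(G)$, and $H$ has no edges within $V(G)$, so $H' \setminus A$ is edgeless. Therefore every component of $H' \setminus A$ is a single vertex, and $w$ trivially has at most one neighbor in each component. This completes the verification that $A$ is a splitting stable set, and since $A \neq \emptyset$, Theorem~\ref{th:z} applies and $H$ is a Zykov graph.

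So the proof reduces, after setup, to the single clean observation that $H$ is bipartite with one side $W$ consisting of subdivision vertices, which makes $H \setminus (\text{any set of subdivision vertices containing } V(H') \cap W)$ edgeless on the remaining original vertices; the only subtlety worth spelling out is why "subdivided at least once" (rather than exactly once) still works, handled by the degree-counting remark above. I would write this up in about fifteen lines, invoking Theorem~\ref{th:z} once at the start and once at the end.
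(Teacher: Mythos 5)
There is a genuine gap: your argument only works when every edge of $G$ is subdivided \emph{exactly} once. The central structural claim -- that the set $W$ of subdivision vertices is a stable set of $H$, hence that $A = V(H')\cap W$ is stable -- is false as soon as some edge is subdivided twice or more: two consecutive subdivision vertices on the same subdivided edge are adjacent. Concretely, if $G$ is a single edge $uv$ subdivided twice, then $H$ is the path $uw_1w_2v$ and your set $A=\{w_1,w_2\}$ is not stable, so it is not a splitting stable set of $H'=H$. The companion assertion that $H$ is bipartite with parts $V(G)$ and $W$ also fails in general (subdividing the three edges of a triangle $1$, $1$ and $2$ times yields $C_7$). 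Your ``degree-counting remark'' only addresses whether a vertex of $A$ could have two neighbours in one component of $H'\setminus A$; it does not repair the stability of $A$, which is exactly where the difference between ``at least once'' and ``exactly once'' bites. (The part of your argument that is correct -- $H'\setminus A$ is edgeless because no two original vertices are adjacent in $H$ -- would indeed finish the proof if $A$ were stable, but it need not be.)

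In the exactly-once case your argument is sound and is essentially the paper's: there $H$ really is bipartite, and the paper simply cites Lemma~\ref{l:bip}. For the extra subdivisions the paper invokes Lemma~\ref{l:subdiv} (Zykov graphs are closed under subdividing edges), which is the ingredient missing from your write-up. You could either add that citation to cover the general case, or repair the direct argument by exhibiting, in each induced subgraph $H'$, a set that is genuinely stable (for instance, on each multiply subdivided edge keep only non-adjacent subdivision vertices); but as written, the set you produce need not be stable, so Theorem~\ref{th:z} cannot be applied to it.
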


\begin{proof}
  If each edge of $G$ is subdivided exactly once, then $H$ is
  bipartite, so the result follows from Lemma~\ref{l:bip}.  If there
  are more subdivisions, the result follows from Lemma~\ref{l:subdiv}. 
\end{proof}

While all trees are Zykov graphs, one may wonder what happens when a graph resembles a tree, or more formally, when it has bounded treewidth. We first provide a brief definition of treewidth for completeness. A tree-decomposition of a graph $G$ is a pair $(T, \mu)$ where $T$ is a tree and $\mu$ is a map from $V(T)$ to $2^{V(G)}$ such that:
\begin{itemize}
    \item for every $v \in V(G)$, the set $\{t \in V(T) : v \in \mu(t)\}$ induces a non-empty subtree of $T$, and
    \item for every $uv \in E(G)$, there exists $t \in V(T)$ such that $\{u, v\} \subseteq \mu(t)$.
\end{itemize}
The width of $(T, \mu)$ is $\max_{t\in V(T)} |\mu(t)|-1$, and the treewidth of $G$, denoted by $\text{tw}(G)$, is the minimum width of a tree-decomposition of $G$. Moreover, there exists an algorithm that returns a tree-decomposition of width $O(\text{tw}(G))$ in time $2^{O(\text{tw}(G))} \cdot n$. For more details, see Cygan et al.~\cite{cygan2015parameterized}.

A straightforward generalization of the graph $F$ (illustrated in Figure~\ref{f:nz}) with arbitrarily large odd cycles provides an example of a non-Zykov graph that is triangle-free and has treewidth 3. 
At first glance, it seems non-trivial to decide whether a graph is a Zykov graph, even if it has bounded treewidth. However, using Theorem~\ref{th:z}, we show that there exists a linear-time algorithm to determine whether the input graph is a Zykov graph when it has bounded \emph{treewidth}.

A Fixed Parameter Tractable (FPT) algorithm is an algorithm that decides whether an instance of a decision problem is positive in time $f(k) \cdot n^c$, where $c$ is some fixed constant, $f$ is a computable function, $n$ is the size of the instance, and $k$ is a chosen parameter depending on the problem.

\begin{theorem}\label{thm:ZykovFPT}
    There exists an algorithm that, given an input graph $G$, decides if $G$ is a Zykov graph in time $f(\text{tw}(G)) \cdot n$, where $n$ is the number of vertices of $G$, $tw(G)$ its treewidth and $f$ a computable function.
\end{theorem}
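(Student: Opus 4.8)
The plan is to combine the structural characterization of Theorem~\ref{th:z} with Courcelle's theorem (see e.g.~\cite{cygan2015parameterized}). By Theorem~\ref{th:z}, $G$ is a Zykov graph if and only if \emph{every non-empty induced subgraph of $G$ contains a non-empty splitting stable set}. This property of $G$ quantifies only over vertex subsets and uses only the adjacency relation, so it is expressible by a single fixed sentence $\varphi$ of monadic second-order logic of graphs (using only vertex-set quantification, hence in particular an $\mathrm{MSO}_2$ sentence). Courcelle's theorem then states that, given $G$ together with a tree-decomposition of width $w$, one can decide whether $G\models\varphi$ in time $g(w,|\varphi|)\cdot n$ for some computable $g$; since $\varphi$ is fixed, this is $f(w)\cdot n$ for a computable $f$. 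Composing with the linear-time routine quoted above, which outputs a tree-decomposition of width $O(\text{tw}(G))$ in time $2^{O(\text{tw}(G))}\cdot n$, yields the claimed running time $f(\text{tw}(G))\cdot n$.

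The step requiring care is writing $\varphi$, specifically the predicate ``$A$ is a splitting stable set of $G[X]$'' for vertex sets $A\subseteq X$. That $A$ is stable is immediate: $\forall u\,\forall v\,\big((u\in A\wedge v\in A)\to\neg\,\mathrm{adj}(u,v)\big)$. For the splitting condition we use that connectivity is $\mathrm{MSO}$-definable: a non-empty set $C$ is connected iff every partition of $C$ into two non-empty parts admits an edge across it, and from this one builds a formula $\mathrm{sameComp}_Y(u,v)$ expressing that $u$ and $v$ lie in a common connected subset of the vertex set $Y$, i.e.\ in the same connected component of $G[Y]$. Then ``$A$ is splitting in $G[X]$'' reads
\[
 \forall a\,\forall u\,\forall v\ \Big(\big(a\in A\wedge u\in X\sm A\wedge v\in X\sm A\wedge u\neq v\wedge \mathrm{adj}(a,u)\wedge \mathrm{adj}(a,v)\big)\ \to\ \neg\,\mathrm{sameComp}_{X\sm A}(u,v)\Big),
\]
and $\varphi$ says: for every non-empty $X\subseteq V(G)$ there is a non-empty $A\subseteq X$ that is a splitting stable set of $G[X]$. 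The non-emptiness guard on $X$ reflects the intended reading of Theorem~\ref{th:z} over non-empty induced subgraphs; note that for $|V(G)|=1$ the sentence $\varphi$ holds, consistently with $Z_1$ being a Zykov graph.

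The main obstacle is thus essentially syntactic: formalizing the connectivity gadget and nesting it correctly inside the alternation $\forall X\,\exists A$, after which the theorem is a black-box application of Courcelle's theorem. Two side remarks. The function $f$ produced this way is non-elementary in $\text{tw}(G)$ but computable, which is all the statement asks for; should an explicit (still super-exponential) bound be desired, one can instead perform a direct bottom-up dynamic programming over the tree-decomposition, whose state at a bag records enough of an iterated ``remove a non-empty splitting stable set'' process, restricted to the already-processed part of the graph, to later verify the ``at most one neighbour per component'' constraints as new vertices are introduced and forgotten. Either route gives the linear-in-$n$ bound; the $\mathrm{MSO}$ argument is the shortest to write.
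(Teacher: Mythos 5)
Your proof is correct and follows essentially the same route as the paper: both apply Theorem~\ref{th:z} and encode ``every non-empty induced subgraph has a non-empty splitting stable set'' as a fixed MSO sentence (stability, connectivity, and the one-neighbour-per-component condition), then invoke Courcelle's theorem together with the linear-time approximate tree-decomposition algorithm. You merely spell out the formula (and an optional direct dynamic-programming alternative) in more detail than the paper's high-level sketch.
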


\begin{proof}
    By a classical theorem of Courcelle, it suffices to construct a monadic second-order logic formula, denoted by $\Phi$, such that a graph $G$ satisfies $\Phi$ if and only if it is a Zykov graph. We provide a high-level overview of the formula~$\Phi$, based on the characterization from Theorem~\ref{th:z}.
    
    A graph $G = (V, E)$ is a Zykov graph if and only if, for any subset of vertices $V' \subseteq V$, there exists a subset $S \subseteq V'$ such that $S$ is an splitting stable set for the induced subgraph $G[V']$. The property of being a splitting set can be expressed as follows: for any subset $C \subseteq V' \setminus S$, if $G[C]$ is connected, then each vertex $v \in S$ is adjacent to at most one vertex in~$C$. Furthermore, the properties of a subset of vertices inducing either a stable set or a connected subgraph can also be expressed in monadic second-order logic.
\end{proof}

Notice that in Figure~\ref{f:nz}, the graph $F'$ has treewidth $3$ and is not a Zykov graph. In Figure~\ref{f:nztw2}, we give a non-Zykov triangle-free graph of treewidth $2$, which is made of two copies of the graph $H$ from Lemma~\ref{l:h}. Note that for the edge shared by both copies of $H$, exactly one must belong to every stable splitting set, while the other must not. However, their roles are reversed in each copy. Consequently, whenever a stable splitting set is non-empty, it inevitably leads to a contradiction at one of these two vertices.
\begin{figure}
    \centering
    \begin{tikzpicture}
    
    \foreach \i/\j in {0/2, 1/1,2/1,3/0}{
        \draw (\i,\j) -- (\i+1,\j);
        \draw (\i,\j) -- (\i,\j-1);

    }
    
    \foreach \i/\j in {1/1,3/0, 4/-1}{
        \draw (\i,\j) -- (\i-1,\j);
        \draw (\i,\j) -- (\i,\j+1);
    }
    \draw (1,0) -- (2,0);
    
    \draw (0,2)  to[bend right = 90, looseness=2] (2,0);
    \draw (2,1)  to[bend left = 90, looseness=2] (4,-1);

    \node[draw, circle, fill=white] () at (0,2) {};
    \node[draw, circle, fill=white] () at (1,2) {};

    \node[draw, circle, fill=white] () at (3,-1) {};
    \node[draw, circle, fill=white] () at (4,-1) {};

    \foreach \i in {0,1,2,3}{
        \node[draw, circle, fill=white] () at (\i,1) {};
        \node[draw, circle, fill=white] () at (\i+1,0) {};

    }
    \end{tikzpicture}
    \caption{Non-Zykov graph of treewidth $2$}
    \label{f:nztw2}
\end{figure}
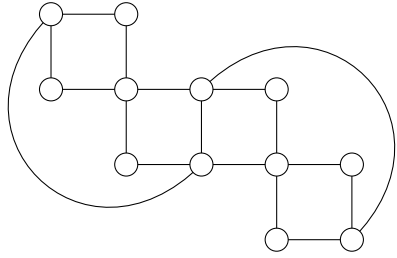

While we have established that recognizing Zykov graphs is tractable for graphs with bounded treewidth, the situation changes drastically for arbitrary graphs. In the following, we prove that the problem becomes NP-complete when the restriction on treewidth is lifted.

\subsection{NP-hardness}
\label{sec:npc}

\begin{theorem}\label{th:npc}
    Recognizing Zykov graphs is NP-complete.
\end{theorem}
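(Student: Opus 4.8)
The plan is to reduce from a known NP-hard problem. Membership in NP is easy: a certificate is a sequence of non-empty splitting stable sets $A_1, A_2, \dots$ witnessing, via Theorem~\ref{th:z} (applied iteratively as in the proof of that theorem through Lemma~\ref{lemma:z}\eqref{i:es}), that $G$ is a Zykov graph; equivalently, one guesses for each induced subgraph encountered a splitting stable set, but more economically one just guesses the nested sequence $V(G) \supseteq V(G)\sm A_1 \supseteq \cdots$, and checks in polynomial time that each $A_i$ is a non-empty stable set that is splitting in the current graph (the splitting condition is checkable in polynomial time: compute the connected components of $G_i\sm A_i$ and verify each vertex of $A_i$ has at most one neighbor in each). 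This has polynomial length and is polynomially verifiable, so the problem is in NP.

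For NP-hardness, the natural source is a variant of \textsc{SAT} or \textsc{Not-All-Equal-SAT}, or a graph problem such as \textsc{3-Colorability} or \textsc{Hypergraph 2-Colorability}; the gadget graph $H$ of Figure~\ref{f:npc-H} together with Lemma~\ref{l:h} is clearly meant to be the central building block, since it forces a rigid ``all-or-nothing'' behavior: in any splitting stable set $A$, either $V(H)\cap A=\emptyset$ or $V(H)\cap A=\{a,a',a'',a'''\}$, and in particular the three vertices $v,v',v''$ are never in $A$. I would use copies of $H$ sharing the $v$-type vertices to propagate a truth assignment, exactly as in the treewidth-$2$ example of Figure~\ref{f:nztw2}, where two copies of $H$ glued along an edge create an unsatisfiable constraint. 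The idea is to build, from a formula $\varphi$, a graph $G_\varphi$ in which the ``outer'' vertices of the $H$-gadgets encode the variables (membership of a gadget's $\{a,a',a'',a'''\}$ in $A$ versus exclusion corresponds to true versus false), clause gadgets enforce that at least one literal is satisfied, and—crucially—a ``frame'' cycle or a top gadget forces $A$ to be non-empty, so that the empty splitting stable set is not an escape. One must also arrange, via Lemma~\ref{l:2c} applied along suitably placed cycles, that the splitting stable set, once non-empty, is forced to behave consistently everywhere; and one should ensure $G_\varphi\sm A$ is always a forest (or at least visibly a Zykov graph, e.g.\ bipartite by Lemma~\ref{l:bip}) whenever $A$ is the assignment-encoding set, so that ``$G_\varphi$ Zykov'' is equivalent to ``$\varphi$ satisfiable'' via Lemma~\ref{lemma:z}\eqref{i:es}.

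The correctness argument then has two directions. If $\varphi$ is satisfiable, take the splitting stable set $A$ built from a satisfying assignment; by design the gadgets are consistent, $A$ is splitting, and $G_\varphi\sm A$ is a forest, hence Zykov, so $G_\varphi$ is Zykov by Lemma~\ref{lemma:z}\eqref{i:es}. Conversely, if $G_\varphi$ is Zykov, Theorem~\ref{th:z} gives a non-empty splitting stable set $A$; the frame forces $A$ to interact with every variable gadget, Lemma~\ref{l:h} forces the all-or-nothing behavior on each gadget so that $A$ induces a well-defined $0/1$ assignment, Lemma~\ref{l:2c} along the clause cycles forces each clause to be satisfied, and the glued-pair trick (as in Figure~\ref{f:nztw2}) rules out inconsistent assignments—yielding a satisfying assignment of $\varphi$.

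The main obstacle I expect is the interplay between the two quantifiers hidden in ``Zykov'': Theorem~\ref{th:z} requires \emph{every} induced subgraph to have a non-empty splitting stable set, not just $G_\varphi$ itself. So after removing the assignment set $A$ one must be sure the residual graph is genuinely Zykov (handled by making it a forest), and conversely one must make sure that no ``cheating'' splitting stable set—one not of the intended gadget form—can rescue a non-satisfiable instance. Controlling this requires the cycles from Lemmas~\ref{l:2c} and~\ref{l:h} to be dense enough to pin down $A$ completely on the gadget vertices while leaving no unintended non-empty splitting stable set in any unsatisfiable configuration. Getting the frame that enforces non-emptiness without itself creating spurious splitting stable sets is the delicate part of the construction.
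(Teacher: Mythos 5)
Your NP-membership argument is sound and essentially identical to the paper's (a chain of at most $|V(G)|$ non-empty splitting stable sets, each checkable in polynomial time, justified by iterating Condition~\eqref{i:es} of Lemma~\ref{lemma:z}). The hardness half, however, is a plan rather than a proof. You correctly identify the same ingredients the paper uses --- a reduction from a SAT variant, the gadget $H$ of Figure~\ref{f:npc-H} together with Lemma~\ref{l:h} to force an all-or-nothing behaviour, Lemma~\ref{l:2c} applied along cycles, and the requirement that removing the intended set leave a forest so that Condition~\eqref{i:es} of Lemma~\ref{lemma:z} and Lemma~\ref{l:fz} apply --- but you never construct the reduction. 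There is no variable gadget, no clause gadget, no specification of how literal occurrences are attached to variable gadgets, no concrete ``frame'' forcing non-emptiness, and hence no correctness argument in either direction. You yourself flag the decisive points (``getting the frame \dots is the delicate part'', ruling out ``cheating'' splitting stable sets) as unresolved expectations; these are precisely where the substance of the proof lies, so the proposal does not establish NP-hardness.

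For comparison, the paper resolves exactly these points as follows: it keeps one master copy of $H$, colours its vertices red ($v,v',v''$) and green ($a,a',a'',a'''$), and joins every further red vertex to $\{a,a',a'',a'''\}$ and every further green vertex to $\{v,v'\}$, so that each such vertex lies in its own copy of $H$ and Lemma~\ref{l:h} pins it down; the ``frame'' you were looking for is unnecessary, because non-emptiness of some splitting stable set comes for free from Theorem~\ref{th:z} once $G_{\mathcal I}$ is assumed Zykov, and the claims (no red vertex, at least one green vertex, hence all green vertices) propagate from there. Variables are encoded by a $5$-cycle $G_i$ whose two uncoloured vertices $t_i,f_i$ carry the assignment (Lemma~\ref{l:2c} forces exactly one of them into the set), and each clause by a $6$-cycle with three pendant $5$-cycles whose vertices $d_{j,k,5}$ are joined to $t_i$ or $f_i$; Lemma~\ref{l:2c} on the $6$-cycle and on the pendant $5$-cycles then forces a true literal in every clause. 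Conversely, a satisfying assignment yields a stable set whose removal leaves a forest of isolated vertices and edges, so it is splitting and the graph is Zykov. Your worry about the two quantifiers in Theorem~\ref{th:z} is not the real obstacle (the forward direction needs only one splitting stable set with Zykov remainder, the backward direction only the top-level set); the real work is designing the gadgets so that \emph{every} non-empty splitting stable set decodes to a satisfying assignment, which your sketch acknowledges but does not carry out.
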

\begin{proof}
First observe that the problem of recognizing Zykov graphs is in
NP. To see this, observe that a graph $G$ is a Zykov graph if and only
if it can be partitioned into $n$ stable sets $A_1$, \dots, $A_n$ in
such a way that for all $k=1, \dots, n-1$, $A_k$ is a splitting stable
set of $G[A_{k+1} \cup \dots \cup A_n]$.  This follows from
Condition~\eqref{i:es} of Lemma~\ref{lemma:z} by an easy induction.
Since deciding whether a given set is a splitting stable set is easily
performed in polynomial time, the stable sets $A_1$, \dots, $A_n$ certify
that a Zykov graph is one.

We now reduce 3-{\sc sat} to the problem of recognizing Zykov graphs.
Consider an instance $\mathcal I$ of 3-{\sc sat} made of $n$ variables
$x_1$, \dots, $x_n$ and $m$ clauses $C_1, \dots, C_m$ each on three
variables.  For each $j=1, \dots, m$, 
$C_j=y_{j, 1} \vee y_{j, 2} \vee y_{j, 3}$ where for all $k=1, 2, 3$,
there exists $1\leq i \leq n$ such that $y_{j, k} = x_i$ or
$y_{j, k} = \overline{x_i}$.

We define a graph $G_{\mathcal I}$ depending on $\mathcal I$.  To make
the explanations easier, a color (either red or green) is assigned to
some vertices of $G_{\mathcal I}$.  The red vertices will turn out to
be in no splitting stable set of $G_{\mathcal I}$ and the green
vertices in all non-empty splitting stable set of $G_{\mathcal
  I}$. The presence of uncolored vertices in some non-empty splitting
stable set will depend on $\mathcal I$.

Prepare a copy of the graph $H$ with vertex-set $\{v, v', v'', a, a',
a'', a'''\}$ as represented on Figure~\ref{f:npc-H}.  Give color red to
$v$, $v'$ and $v''$.  Give color green to $a, a', a''$ and $a'''$. 
This graph $H$ will allow us to force many other vertices of $G_{\mathcal{I}}$ to be part of a splitting stable set or not. More precisely, any other red vertex $u$ of $G_{\mathcal{I}}$ will be made adjacent to $\{a, a', a'', a'''\}$, so that $(V(H) \setminus \{v'\}) \cup \{u\}$ will induce a graph isomorphic to $H$, and any other green vertex $u$ of $G_{\mathcal{I}}$ will be made adjacent to $\{v, v'\}$, so that $(V(H) \setminus \{a\})\cup \{u\}$ will induce a graph isomorphic to $H$.

Now, for each variable $x_i$, prepare a graph $G_{i}$ on 5 vertices
$t_i$, $f_i$, $b_{i, 1}$, $b_{i, 2}$ and $b_{i, 3}$.  Add edges in
such way that $t_ib_{i,1}b_{i,2}b_{i,3}f_it_i$ is a cycle. Give color
green to vertex $b_{i, 2}$.  Give color red to vertices $b_{i, 1}$ and
$b_{i, 3}$.  Observe that no color is given to vertices $t_i$ and
$f_i$. 

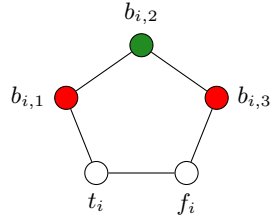
\begin{figure}
\centering 
  
\begin{tikzpicture}
\node[draw, circle, fill = ForestGreen] (1) at (0,1.7) [label=above:$b_{i,2}$]{};
\node[draw, circle, fill = red] (2) at (1,1) [label=right:$b_{i,3}$]{};
\node[draw, circle] (3) at (0.6,0) [label=below:$f_i$]{};
\node[draw, circle] (4) at (-0.6,0) [label=below:$t_i$]{};
\node[draw, circle, fill = red] (5) at (-1,1) [label=left:$b_{i,1}$]{};

\draw (1) -- (2) ;
\draw (2) -- (3) ;
\draw (3) -- (4) ;
\draw (4) -- (5) ;
\draw (5) -- (1) ;
\end{tikzpicture}
\caption{Graph $G_i$\label{f:npc-Gi}}
\end{figure}

For each clause $C_j$, prepare a graph $G_{C_j}$ on 21 vertices
$c_{j, 1}$, \dots $c_{j, 6}$ and $d_{j, k, \ell}$ with
$k\in \{1, 2, 3\}$ and $\ell \in \{1, \dots, 5\}$. Add the edges to create three cycles of 5 vertices connected by one edge to a cycle on 6 vertices, as illustrated in Figure~\ref{f:npc-GCj}. Four vertices of $G_{C_j}$ are colored in green, and eight of them in red.

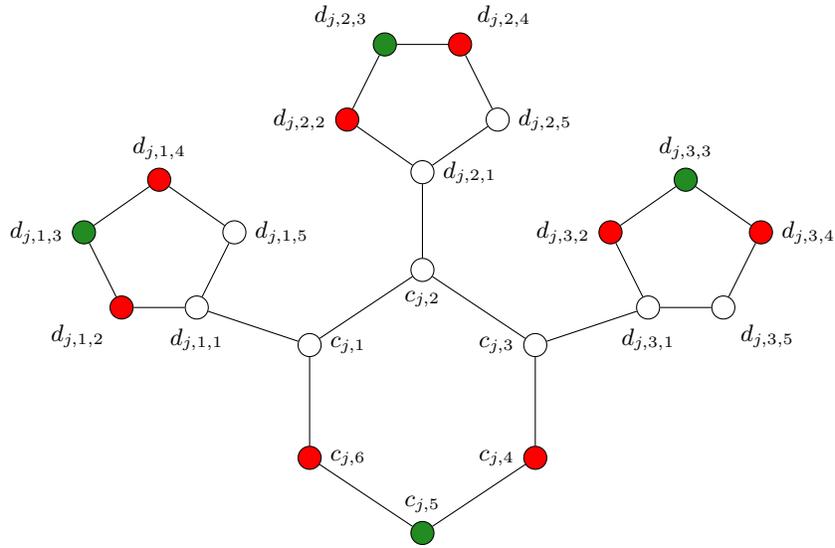
\begin{figure}
\centering

\begin{tikzpicture}
\node[draw, circle] (1) at (0,2.5) [label=below:$c_{j,2}$]{};
\node[draw, circle] (2) at (1.5,1.5) [label=left:$c_{j,3}$]{};
\node[draw, circle, fill = red] (3) at (1.5,0) [label=left:$c_{j,4}$]{};
\node[draw, circle, fill = ForestGreen] (4) at (0,-1) [label=above:$c_{j,5}$]{};
\node[draw, circle, fill = red] (5) at (-1.5,0) [label=right:$c_{j,6}$]{};
\node[draw, circle] (6) at (-1.5,1.5) [label=right:$c_{j,1}$]{};

\draw (1) -- (2) ;
\draw (2) -- (3) ;
\draw (3) -- (4) ;
\draw (4) -- (5) ;
\draw (5) -- (6) ;
\draw (6) -- (1) ;

\node[draw, circle] (7) at (3,2) [label=below:$d_{j,3,1}$]{};
\node[draw, circle] (8) at (4,2) [label=below right:$d_{j,3,5}$]{};
\node[draw, circle, fill = red] (9) at (4.5,3) [label=right:$d_{j,3,4}$]{};
\node[draw, circle, fill = ForestGreen] (10) at (3.5,3.7) [label=above:$d_{j,3,3}$]{};
\node[draw, circle, fill = red] (11) at (2.5,3) [label=left:$d_{j,3,2}$]{};

\draw (7) -- (8) ;
\draw (8) -- (9) ;
\draw (9) -- (10) ;
\draw (10) -- (11) ;
\draw (11) -- (7) ;

\draw (2) -- (7) ;

\node[draw, circle] (12) at (-3,2) [label=below:$d_{j,1,1}$]{};
\node[draw, circle, fill = red] (13) at (-4,2) [label=below left:$d_{j,1,2}$]{};
\node[draw, circle, fill = ForestGreen] (14) at (-4.5,3) [label=left:$d_{j,1,3}$]{};
\node[draw, circle, fill = red] (15) at (-3.5,3.7) [label=above:$d_{j,1,4}$]{};
\node[draw, circle] (16) at (-2.5,3) [label=right:$d_{j,1,5}$]{};

\draw (12) -- (13) ;
\draw (13) -- (14) ;
\draw (14) -- (15) ;
\draw (15) -- (16) ;
\draw (16) -- (12) ;

\draw (12) -- (6) ;

\node[draw, circle] (17) at (0,3.8) [label=right:$d_{j,2,1}$]{};
\node[draw, circle, fill = red] (18) at (-1,4.5) [label=left:$d_{j,2,2}$]{};
\node[draw, circle, fill = ForestGreen] (19) at (-0.5,5.5) [label=above left:$d_{j,2,3}$]{};
\node[draw, circle, fill = red] (20) at (0.5,5.5) [label=above right:$d_{j,2,4}$]{};
\node[draw, circle] (21) at (1,4.5) [label=right:$d_{j,2,5}$]{};

\draw (17) -- (18) ;
\draw (18) -- (19) ;
\draw (19) -- (20) ;
\draw (20) -- (21) ;
\draw (21) -- (17) ;

\draw (17) -- (1) ;

\end{tikzpicture}
  \caption{Graph $G_{C_j}$\label{f:npc-GCj}}
\end{figure}

Then, add all possible edges between the red vertices to $\{a,a',a'',a'''\}$, and all possible edges between the green vertices and $\{v,v'\}$.

For every $j=1, \dots, m$ and every $k=1, 2, 3$, let $i\in \{1, \dots,
n\}$ be such that $y_{j, k}=x_i$ or $y_{j, k} = \overline{x_i}$.  If
$y_{j, k}=x_i$, add the edge $f_id_{j, k, 5}$.  If
$y_{j, k}=\overline{x_i}$, add the edge $t_id_{j, k, 5}$.   This conclude
the construction of $G_{\mathcal I}$.

To conclude the proof of Theorem~\ref{th:npc}, it remains to prove
that $\mathcal I$ admits a truth assignment satisfying all clauses
if and only if $G_{\mathcal I}$ is a Zykov graph.

Suppose first that the variables of $\mathcal I$ admits a truth
assignment satisfying all clauses $\mathcal I$.  Let us build a stable
set $A$ of $G_{\mathcal I}$. First add to $A$ all green vertices, all vertices $t_i$ such that $x_i$ it true, and all
vertices $f_i$ such that $x_i$ is false.  For every clause $C_j$, choose an
integer $k_j\in \{1, 2, 3\}$ such that variable $y_{j, k} = x_i$ and
$x_i$ is true or $y_{j, k} = \overline{x_i}$ and $x_i$ is false (this
is possible since the clauses are all satisfied by the truth
assignment).  Then, add $c_{j, k_j}$ and $d_{j, k_j, 5}$ to $A$.  For
all $k'\in \{1, 2, 3\} \sm \{k_j\}$, add the vertex $d_{j, k', 1}$ to
$A$.  Observe that $A$ is stable set. In particular, there is no edge $d_{j, k, 5}f_i$ (resp. $d_{j, k, 5}t_i$) since $d_{j,k,5}$ is taken only when $C_j$ is satisfied by variable $x_i$ which appears positively (resp. negatively) in it, in which case $t_i$ (resp. $f_i$) is taken in $A$.
Also, observe that removing $A$ from $G_{\mathcal I}$ yields a forest whose components are isolated vertices or edges. Since $G_{\mathcal{I}}$ is triangle-free, it proves that $A$ is a splitting stable set. In addition, since forests are Zykov graph by Lemma~\ref{l:fz}, it holds that $G_{\mathcal{I}}$ is a Zykov graph by Condition~\eqref{i:es} of
Lemma~\ref{lemma:z}.

Conversely, suppose that $G_{\mathcal I}$ is a Zykov graph. By
Theorem~\ref{th:z}, $G_{\mathcal I}$ contains a non-empty splitting
stable set $A$.  We now prove several claims.

\begin{myclaim}
  \label{c:nored}
  $A$ contains no red vertex.
\end{myclaim}

\begin{proofclaim}
  By Lemma~\ref{l:h} no red vertex of $G_{\mathcal I}$ can be in $A$
  because every red vertex is contained in a copy of $H$.
\end{proofclaim}

\begin{myclaim}
  \label{c:egreen}
  $A$ contains at least one green vertex.
\end{myclaim}

\begin{proofclaim}
  Since $A$ is not empty, we may assume, because of \eqref{c:nored}, that $A$ contains an uncolored vertex. Let us check that it implies that a green vertex in also $A$.

  If $A$ contains $t_i$ or $f_i$ for some $i= 1, \dots, n$, then $A$
  must contain $b_{i, 2}$ by Lemma~\ref{l:2c} applied to $G_i$.

  If $A$ contains $d_{j,k, 1}$ or $d_{j, k, 5}$ for some
  $j=1, \dots, m$ and $k=1, 2, 3$, then it must contain $d_{j, k, 3}$
  by Lemma~\ref{l:2c} applied to
  $d_{j, k, 1}d_{j, k, 2}d_{j, k, 3}d_{j, k, 4}d_{j, k, 5}d_{j, k,
    1}$.

  If $A$ contains $c_{j, 2}$ for some $j= 1, \dots, m$, then it must contain
  $c_{j, 5}$ by Lemma~\ref{l:2c} applied to
  $c_{j, 1}c_{j,  2}c_{j, 3}c_{j, 4}c_{j, 5}c_{j, 6}c_{j, 1}$.

  If $A$ contains $c_{j,1}$ $j= 1, \dots, m$, then it must contain $a$
  by Lemma~\ref{l:2c} applied to
  $c_{j, 1}d_{j, 1, 1}d_{j, 1, 2}ac_{j, 6}c_{j, 1}$.  The proof is
  similar when $A$ contains $c_{j, 3}$.
\end{proofclaim}

\begin{myclaim}
  \label{c:agreen}
  $A$ contains all green vertices. 
\end{myclaim}

\begin{proofclaim}
  By~\eqref{c:egreen}, some green vertex is in $A$. So, by
  Lemma~\ref{l:h}, $a\in A$
  since every green vertex is contained together with $a$ in some
  copy of $H$.  Hence by Lemma~\ref{l:h} and the same remark, all
  green vertices are in $A$. 
\end{proofclaim}

By Lemma~\ref{l:2c} applied to $G_i$, we know that exactly one of
$t_i$ or $f_i$ is in $A$. If $t_i\in A$ we assign the value true to
$x_i$ and the value false otherwise.  We claim that this truth
assignment satisfies all clauses of $\mathcal I$.

Indeed, let $C_j$ be a clause. By Lemma~\ref{l:2c} applied to
$c_{j, 1}c_{j, 2}c_{j, 3}c_{j, 4}c_{j, 5}c_{j, 6}c_{j, 1}$, at least
one vertex among $c_{j, 1}$, $c_{j, 2}$ or $c_{j, 3}$ must be in $A$,
say $c_{j, k}$.  Suppose that $y_{j, k}= x_i$. If $x_i$ is assigned
value false, then $f_i\in A$ and
$f_id_{j, k, 5}\in E(G_{\mathcal I})$.  Hence, none of $d_{j,k,1}$ and
$d_{j, k, 5}$ is in $A$.  So, the cycle
$d_{j, k, 1}d_{j, k, 2}d_{j, k, 3}d_{j, k, 4}d_{j, k, 5}d_{j, k, 1}$
contradicts Lemma~\ref{l:2c}.  Hence, $x_i$ is assigned value true.
It follows that $C_j$ is satisfied.  The proof when
$y_{j, k}= \overline{x_i}$ is symmetric.  We proved that all clauses
are satisfied.  This concludes the proof of Theorem~\ref{th:npc}. 
\end{proof}

 A byproduct of our construction are the following.  We omit the proofs that are similar to the proof of Theorems~\ref{th:npc}. 

\begin{theorem}
  \label{th:sssNpc}
  It is NP-complete to decide whether an input graph contains a
  non-empty splitting stable set. 
\end{theorem}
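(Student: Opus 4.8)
The plan is to reuse the construction from the proof of Theorem~\ref{th:npc} almost verbatim. Membership in NP is the easy part: a non-empty splitting stable set is a certificate of polynomial size, and verifying a candidate set $A$ amounts to checking that $A$ is non-empty and stable, computing the connected components of $G\sm A$, and checking that each vertex of $A$ has at most one neighbor in each of them, all of which is done in polynomial time.

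For the hardness part, I would reduce from 3-{\sc sat} by sending an instance $\mathcal I$ to the same graph $G_{\mathcal I}$ constructed in the proof of Theorem~\ref{th:npc}; this is clearly computable in polynomial time. The point is that the proof of Theorem~\ref{th:npc} in fact establishes exactly the equivalence we want: $\mathcal I$ is satisfiable if and only if $G_{\mathcal I}$ contains a non-empty splitting stable set. For the forward implication, that proof builds from a satisfying assignment a set $A$ that contains all green vertices --- hence is non-empty --- and shows that $A$ is a splitting stable set; the conclusion that $G_{\mathcal I}$ is then a Zykov graph is drawn only afterwards and is not needed here. For the backward implication, the proof takes an arbitrary non-empty splitting stable set $A$ of $G_{\mathcal I}$ (which, in the original argument, is produced by Theorem~\ref{th:z} from the assumption that $G_{\mathcal I}$ is Zykov, but which we may now simply assume to exist) and then Claims~\eqref{c:nored}, \eqref{c:egreen}, \eqref{c:agreen} together with the clause-by-clause argument that follows extract a satisfying truth assignment of $\mathcal I$, using nothing about $A$ beyond the fact that it is a non-empty splitting stable set.

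The only step needing attention is to re-audit the backward part of the proof of Theorem~\ref{th:npc} and make sure that no hidden use is made of the hypothesis ``$G_{\mathcal I}$ is Zykov'' beyond extracting one non-empty splitting stable set --- for instance, that Theorem~\ref{th:z} is not silently invoked on a proper induced subgraph. Since every claim in that argument relies only on Lemma~\ref{l:h} or Lemma~\ref{l:2c}, both of which are statements about an arbitrary splitting stable set of the ambient graph, this check goes through, and I expect this auditing to be the main (and essentially only) obstacle; everything else is inherited from Theorem~\ref{th:npc}.
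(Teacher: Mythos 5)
Your proposal is correct and is exactly what the paper intends: the paper omits the proof of Theorem~\ref{th:sssNpc}, noting it is similar to that of Theorem~\ref{th:npc}, and your audit rightly observes that the backward direction of that proof uses only the existence of a non-empty splitting stable set (via Lemmas~\ref{l:h} and~\ref{l:2c}), not the Zykov hypothesis, while the forward direction explicitly exhibits such a set. Nothing is missing.
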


\begin{theorem}
  \label{th:FS}
  It is NP-complete to decide whether an input graph can be
  (vertex-wise) partitioned into a stable set and a forest in such way
  that every vertex of the stable set has at most one neighbor in each
  component of the forest.  
\end{theorem}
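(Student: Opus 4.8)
The plan is to reuse, essentially verbatim, the reduction from $3$-{\sc sat} constructed in the proof of Theorem~\ref{th:npc}, after observing that on a fixed graph the two problems ask almost the same thing. First I would note that partitioning $G$ into a stable set $A$ and a forest $F$ so that every vertex of $A$ has at most one neighbor in each component of $F$ is exactly the same as requiring a (possibly empty) splitting stable set $A$ of $G$ with $G\sm A$ a forest. Membership in NP is then immediate: the partition $(A,\,V(G)\sm A)$ is a polynomial-size certificate, and one checks in polynomial time that $A$ is stable, that $G\sm A$ induces a forest, and --- after computing the connected components of $G\sm A$ --- that each vertex of $A$ has at most one neighbor in each of them.

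For hardness, given a $3$-{\sc sat} instance $\mathcal I$, I would take $G_{\mathcal I}$ to be exactly the graph built in the proof of Theorem~\ref{th:npc}, and prove that $\mathcal I$ is satisfiable if and only if $G_{\mathcal I}$ admits such a partition. The forward direction is already carried out there: when $\mathcal I$ has a satisfying assignment, the set $A$ produced in that proof is stable, $G_{\mathcal I}\sm A$ is a forest whose components are isolated vertices or single edges, and (since $G_{\mathcal I}$ is triangle-free) every vertex of $A$ has at most one neighbor in each such component; hence $(A,\, G_{\mathcal I}\sm A)$ is the desired partition.

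For the converse, I would assume $G_{\mathcal I}$ is partitioned into a stable set $A$ and a forest $F=G_{\mathcal I}\sm A$ with the one-neighbor-per-component property, which is exactly saying that $A$ is a splitting stable set of $G_{\mathcal I}$. The one extra step compared to Theorem~\ref{th:npc} is to rule out $A=\emptyset$: this is immediate because $G_{\mathcal I}$ contains a cycle (each gadget $G_i$ is a $5$-cycle), so it is not a forest and $A$ must be non-empty. From that point the argument is literally the converse part of the proof of Theorem~\ref{th:npc}: Claims~\eqref{c:nored}, \eqref{c:egreen} and \eqref{c:agreen} use only Lemmas~\ref{l:2c} and~\ref{l:h} together with $A\neq\emptyset$ (never the stronger hypothesis that $G_{\mathcal I}$ is a Zykov graph), so they still apply and force $A$ to contain all green vertices and no red vertex; then, reading off for each variable $x_i$ which of $t_i,f_i$ lies in $A$ gives a truth assignment satisfying every clause, so $\mathcal I$ is satisfiable.

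The only real point of care is this non-emptiness issue: in Theorem~\ref{th:npc} it came for free from Theorem~\ref{th:z} (a Zykov graph has a non-empty splitting stable set in every induced subgraph, in particular in itself), whereas the present formulation allows $A=\emptyset$, so I must explicitly invoke the presence of a cycle in $G_{\mathcal I}$. Beyond that I do not expect any genuine obstacle, since the bulk of the work --- in particular the correctness of all the gadgets --- has already been established in the proof of Theorem~\ref{th:npc}.
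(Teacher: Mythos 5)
Your proposal is correct and follows essentially the same route as the paper, which explicitly omits this proof as being ``similar to the proof of Theorem~\ref{th:npc}'': you reuse the same reduction graph $G_{\mathcal I}$, observe that the required partition is exactly a splitting stable set whose removal leaves a forest, and rerun the claims of the converse direction. Your extra remark that non-emptiness of $A$ must now be argued directly (via the $5$-cycles in the variable gadgets, since $G_{\mathcal I}$ is not a forest) is exactly the right point of care and is handled correctly.
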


\subsection{Augmenting the girth}

A graph $G=(V,E)$ is called a $(n,d,c)$-\emph{expander} if it has $n$ vertices, the maximum degree of a vertex is $d$, and 
$$
\min_{W\subseteq V, |W|<n/2} \frac{|N(W)|}{|W|} \geq  c
$$
where $N(W)$ denotes the set of vertices adjacent to $W$ in $V\setminus W$.

In addition, let $\lambda(G)$ be the second largest eigenvalue of the adjacency matrix of $G$ in absolute value. We state three well-known results on expanders.

\begin{theorem}[\protect{\cite[Theorem 9.2.1]{alon2015probabilistic}}]\label{thm:EdgeExpension}
For every partition of the set of vertices of an $(n,d, c)$-expander into two subsets $B$ and $C$ :
$$
e(B,C) \geq \frac{(d-\lambda)|B||C|}{n}
$$
where $e(B,C)$ is the number of edges between $B$ and $C$.
\end{theorem}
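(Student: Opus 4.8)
The statement is the one-sided direction of the expander mixing lemma, and the plan is to give the standard spectral proof. Let $A$ be the adjacency matrix of $G$; since an $(n,d,c)$-expander is $d$-regular in the convention of the cited reference (and is connected because $c>0$), the all-ones vector $\mathbf{1}$ is a simple eigenvector of $A$ with eigenvalue $d$, and every eigenvalue of $A$ on the hyperplane $\mathbf{1}^{\perp}$ has absolute value at most $\lambda = \lambda(G)$. First I would write $e(B,C) = \mathbf{1}_B^{\top} A\, \mathbf{1}_C$, where $\mathbf{1}_B, \mathbf{1}_C \in \{0,1\}^{n}$ are the characteristic vectors of $B$ and $C$.

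Next I would decompose $\mathbf{1}_B = \frac{|B|}{n}\mathbf{1} + x$ and $\mathbf{1}_C = \frac{|C|}{n}\mathbf{1} + y$ with $x, y \perp \mathbf{1}$. Because $\{B,C\}$ is a partition, $\mathbf{1}_B + \mathbf{1}_C = \mathbf{1}$ while $\frac{|B|}{n} + \frac{|C|}{n} = 1$, hence $y = -x$; and from $\|\mathbf{1}_B\|^{2} = |B|$ one reads off $\|x\|^{2} = |B| - \frac{|B|^{2}}{n} = \frac{|B|\,|C|}{n}$. Expanding the bilinear form and using $A\mathbf{1} = d\mathbf{1}$ together with $\mathbf{1}^{\top} x = \mathbf{1}^{\top} y = 0$ then gives
\[
  e(B,C) \;=\; \frac{|B|\,|C|}{n^{2}}\,\mathbf{1}^{\top} A\,\mathbf{1} \;+\; x^{\top} A\, y \;=\; \frac{d\,|B|\,|C|}{n} \;-\; x^{\top} A\, x .
\]
Since $x$ lies in $\mathbf{1}^{\perp}$, writing $x$ in an orthonormal eigenbasis of the restriction of $A$ to $\mathbf{1}^{\perp}$ shows $x^{\top} A\, x \le \lambda \|x\|^{2}$, and therefore $e(B,C) \ge \frac{d\,|B|\,|C|}{n} - \lambda \frac{|B|\,|C|}{n} = \frac{(d-\lambda)\,|B|\,|C|}{n}$, which is the claim.

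There is no genuinely hard step here; only a few points deserve care. One must use (or reduce to) $d$-regularity so that $d$ really is the Perron eigenvalue with eigenvector $\mathbf{1}$ — this matches the convention of the reference and the $d$-regular expanders used later in the paper. The spectral inequality $x^{\top} A x \le \lambda \|x\|^{2}$ on $\mathbf{1}^{\perp}$ is immediate from the spectral theorem once one observes that the largest eigenvalue of $A$ distinct from $d$ is at most $\lambda$. And the identity $\|x\|^{2} = |B||C|/n$ is the elementary computation above. Note that, because only a lower bound on $e(B,C)$ is sought, the full absolute-value bound on the non-Perron spectrum is not needed — only an upper bound on $x^{\top} A x$ — so the argument is slightly more robust than the two-sided mixing lemma.
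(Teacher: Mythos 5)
The paper gives no proof of this statement: it is quoted verbatim (as Theorem 9.2.1 of Alon--Spencer) and used as a black box, so there is nothing internal to compare against. Your argument is the standard spectral proof of the one-sided expander mixing lemma, and it is correct: the identity $e(B,C)=\mathbf{1}_B^{\top}A\mathbf{1}_C$, the orthogonal decomposition with $y=-x$ and $\|x\|^{2}=|B||C|/n$, and the bound $x^{\top}Ax\le\lambda\|x\|^{2}$ on $\mathbf{1}^{\perp}$ together give exactly the claimed inequality. You are also right to flag the only delicate point, namely that the argument needs $d$-regularity (so that $\mathbf{1}$ is the Perron eigenvector), whereas the paper's definition of an $(n,d,c)$-expander only asks for maximum degree $d$; this matches the convention of the cited reference and is harmless for the paper's application, where the expanders in question are $k$-regular Ramanujan graphs. (Your parenthetical that $c>0$ forces connectivity is not quite needed: even if $d$ had multiplicity greater than one, $\lambda$ would equal $d$ and the bound would be trivially true, and in any case $x\perp\mathbf{1}$ already confines $x$ to the invariant subspace where all eigenvalues are at most $\lambda$.)
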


\begin{theorem}[\protect{\cite[Corollary 9.2.2]{alon2015probabilistic}}]\label{thm:Alon}
If $G$ is a $k$-regular graph with $n$ vertices, then $G$ is an $(n,k,c)$-expander for $c= \frac{k-\lambda(G)}{2k}$.
\end{theorem}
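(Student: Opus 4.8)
The plan is to deduce the expansion property directly from the edge-mixing bound of Theorem~\ref{thm:EdgeExpension}, applied to $G$ itself; that bound needs only $k$-regularity and the spectral gap of $G$, which is exactly the hypothesis here. Write $\lambda=\lambda(G)$ and $c=\frac{k-\lambda}{2k}$. Since $G$ is $k$-regular, all of its eigenvalues lie in $[-k,k]$, so $\lambda\le k$ and $c\ge 0$; if $c=0$ the inequality $|N(W)|/|W|\ge c$ is trivial, so we may assume $\lambda<k$.

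First I would fix an arbitrary nonempty $W\subseteq V$ with $|W|<n/2$, and set $B=W$ and $C=V\setminus W$, so that $(B,C)$ is a partition of $V$ with $|C|=n-|W|>n/2$. Applying Theorem~\ref{thm:EdgeExpension} to this partition gives
\[
e(B,C)\ \ge\ \frac{(k-\lambda)\,|B|\,|C|}{n}\ =\ \frac{(k-\lambda)\,|W|\,(n-|W|)}{n}\ >\ \frac{(k-\lambda)\,|W|}{2},
\]
where the last inequality uses $n-|W|>n/2$.

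Next I would bound $e(B,C)$ from above by $k\,|N(W)|$: every edge counted by $e(B,C)$ has one endpoint in $W$ and, by definition of $N(W)$, its other endpoint in $N(W)$; since $G$ is $k$-regular, each vertex of $N(W)$ is incident to at most $k$ such edges. Combining the two estimates yields $k\,|N(W)|>\frac{(k-\lambda)\,|W|}{2}$, hence $\frac{|N(W)|}{|W|}>\frac{k-\lambda}{2k}=c$. As $W$ ranged over all nonempty subsets with $|W|<n/2$, this gives $\min_{W}|N(W)|/|W|\ge c$; since moreover $G$ has $n$ vertices and maximum degree $k$, we conclude that $G$ is an $(n,k,c)$-expander.

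There is no genuine obstacle here beyond bookkeeping. The one point that deserves care is that Theorem~\ref{thm:EdgeExpension} must be read as a statement valid for every $k$-regular graph (its proof is the usual Rayleigh-quotient computation with the adjacency matrix and the indicator vectors of $B$ and $C$, and does not presuppose expansion), so that there is no circularity in applying it to $G$ before $G$ is known to be an expander. One should also observe that the strict inequality $|W|<n/2$ is precisely what yields the factor $\tfrac12$ appearing in $c$, and that the degenerate cases $W=\emptyset$ (excluded by the definition of the expansion ratio) and $\lambda=k$ (where $c=0$ and the bound is vacuous) are handled trivially.
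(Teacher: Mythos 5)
Your proof is correct and is exactly the standard derivation of this statement from Theorem~\ref{thm:EdgeExpension}: the paper gives no proof of its own, simply citing Alon and Spencer, where Corollary 9.2.2 is deduced from Theorem 9.2.1 in precisely this way (lower-bound $e(W,V\setminus W)$ by $\frac{(k-\lambda)|W|}{2}$ using $|V\setminus W|>n/2$, then upper-bound it by $k\,|N(W)|$). Your circularity worry can be dispatched even more simply than by re-reading Theorem~\ref{thm:EdgeExpension} as a statement about all $k$-regular graphs: every $k$-regular graph is trivially an $(n,k,0)$-expander, so the theorem as stated already applies to $G$ with $c=0$, and its conclusion involves only $k$ and $\lambda$, not $c$.
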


\begin{theorem}[\cite{lubotzky1988ramanujan}]\label{thm:ExpanderGirth}
For any $n\geq 1$, there exists a $k$-regular graph $G$ with at least $n$ vertices such that :
\begin{itemize}
\item $2\sqrt{k-1}-1 \leq \lambda(G) \leq 2\sqrt{k-1}$ ;
\item the girth of $G$ is at least $\frac{4}{3} \log_{k-1} n$.
\end{itemize}
\end{theorem}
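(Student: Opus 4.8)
The plan is to exhibit such graphs explicitly via the Lubotzky--Phillips--Sarnak construction (Margulis gave an essentially equivalent one). Fix once and for all a prime $p \equiv 1 \pmod{4}$; the degree will be $k = p+1$, so that $k-1 = p$, $\log_{k-1} = \log_p$ and $2\sqrt{k-1} = 2\sqrt{p}$. By Jacobi's four--square theorem there are exactly $p+1$ integer quadruples $(a_0,a_1,a_2,a_3)$ with $a_0^2+a_1^2+a_2^2+a_3^2 = p$, $a_0 > 0$ odd and $a_1,a_2,a_3$ even; reading each as an integer quaternion and reducing modulo an auxiliary prime $q$ gives a symmetric set $S$ of $p+1$ elements of $PGL_2(\mathbf{F}_q)$. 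Given the target $n$, choose a prime $q \equiv 1 \pmod{4}$ that is large (how large will be fixed below) and for which $p$ is a quadratic residue mod $q$ --- infinitely many such $q$ exist by Dirichlet's theorem together with quadratic reciprocity --- and let $G = X^{p,q}$ be the Cayley graph of $PSL_2(\mathbf{F}_q)$ on $S$. Then $G$ is $(p+1)$-regular, it is connected and non-bipartite because $S$ generates $PSL_2(\mathbf{F}_q)$ (a consequence of strong approximation), and $|V(G)| = \tfrac12 q(q^2-1)$, which is $\ge n$ as soon as $q$ is large.

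It remains to establish the two quantitative properties. \emph{Girth.} A closed non-backtracking walk of length $\ell$ based at the identity of $G$, lifted to the infinite $(p+1)$-regular tree, is the same datum as an integer quaternion of reduced norm $p^\ell$ that becomes a scalar matrix modulo $q$ without being a scalar quaternion; that is, a solution of $b_0^2+b_1^2+b_2^2+b_3^2 = p^\ell$ with $q \mid b_1,b_2,b_3$ and $(b_1,b_2,b_3) \ne (0,0,0)$. Then $b_1^2+b_2^2+b_3^2$ is a positive multiple of $q^2$ while $b_0^2 \le p^\ell$, so $p^\ell \ge q^2$, and a slightly more careful version of this bound (keeping track of the parity of $\ell$) yields $\mathrm{girth}(G) \ge 2\log_p q - O(1)$. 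If $q$ is chosen so that $q^2$ comfortably exceeds $n^{4/3}$ --- equivalently $|V(G)| = \Theta(q^3)$ exceeds roughly $n^{2}$ --- this gives $\mathrm{girth}(G) \ge \tfrac43\log_{k-1} n$; such a $q$ is available for every $n$. \emph{Spectral gap.} Identify the adjacency operator of $G$ with a Hecke operator $T_p$ acting on a finite-dimensional space of automorphic forms for a definite quaternion algebra with level structure at $q$ (equivalently, functions on a finite quotient of the Bruhat--Tits tree), transport the nontrivial eigenvalues via the Jacquet--Langlands correspondence to eigenvalues of weight-$2$ classical cusp forms, and invoke the Ramanujan--Petersson bound for the latter --- the Eichler--Igusa theorem, which rests on Deligne's proof of the Weil conjectures --- to get $|\mu| \le 2\sqrt p = 2\sqrt{k-1}$ for every eigenvalue $\mu$ other than $p+1$. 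Since $G$ is connected and non-bipartite, $p+1$ is a simple eigenvalue and $-(p+1)$ is not an eigenvalue, so $\lambda(G)$ --- the second largest eigenvalue in absolute value --- is at most $2\sqrt{k-1}$. The matching lower bound $\lambda(G) \ge 2\sqrt{k-1}-1$ is free: by the Alon--Boppana argument (counting closed non-backtracking walks, which up to length about $\mathrm{girth}(G)$ agree with those on the tree) a $k$-regular graph of girth $g$ has a nontrivial eigenvalue at least $2\sqrt{k-1}\,(1-O(1/g))$, and here $g \to \infty$; enlarging $q$ once more if necessary makes all the estimates hold at once.

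The main obstacle, by a wide margin, is the upper bound $|\mu| \le 2\sqrt{k-1}$: regularity, connectivity, the vertex count, and the girth estimate are all elementary number theory and group theory, but the optimal spectral gap is a genuine instance of the Ramanujan conjecture and admits no elementary proof, so Deligne's theorem cannot be avoided. (If one only wanted a spectral gap bounded away from $k$ by an absolute constant --- which already suffices for the uses made of this theorem in the present section --- the entire construction and analysis could be made elementary, for instance through Kazhdan's property (T) or a direct walk-counting estimate; we keep the sharp constant only because it is what the cited source provides.)
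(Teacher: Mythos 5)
This theorem is imported verbatim from \cite{lubotzky1988ramanujan} and the paper supplies no proof of its own, so there is nothing internal to compare against; your sketch is precisely the Lubotzky--Phillips--Sarnak construction and is correct in outline, including the legitimate use of the phrase ``at least $n$ vertices'' to take $|V(G)|\approx n^2$ so that the non-bipartite girth bound $\mathrm{girth}\ge 2\log_p q$ delivers the stated $\tfrac43\log_{k-1}n$, and the Alon--Boppana argument for the lower bound on $\lambda(G)$. (One minor attribution point: the weight-$2$ Ramanujan--Petersson bound invoked for the spectral gap is Eichler's, resting on Weil's Riemann hypothesis for curves rather than Deligne's general Weil conjectures, but this does not affect the argument.)
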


\begin{theorem}\label{thm:ZykovLargeGirth}
There exists non-Zykov (and therefore non-Blanche-Descartes) graphs of arbitrarily large girth.
\end{theorem}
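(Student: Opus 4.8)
The plan is to exhibit, for each target girth $g$, a single graph $G$ that has \emph{no} non-empty splitting stable set and has girth at least $g$: by Theorem~\ref{th:z} such a $G$ is not a Zykov graph, and then by Lemma~\ref{l:ZiBD} it is not a Blanche Descartes graph either, so letting $g\to\infty$ gives the statement. Concretely, I would fix a large absolute constant $k$ (chosen at the very end) and, given $g$, apply Theorem~\ref{thm:ExpanderGirth} to get a connected $k$-regular graph $G$ on $n$ vertices with $\lambda:=\lambda(G)\le 2\sqrt{k-1}$ and girth at least $g$ (note $n\to\infty$ as $g\to\infty$, and $G$ is connected since $\lambda<k$). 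By Theorem~\ref{thm:Alon}, $G$ is an $(n,k,\tfrac{k-\lambda}{2k})$-expander, so Theorem~\ref{thm:EdgeExpension} is available.

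Next I would show the expansion of $G$ heavily constrains any non-empty splitting stable set $A$. Let $C_1,\dots,C_t$ be the connected components of $G\sm A$ (this is non-empty as $A\neq V(G)$). Since there are no edges between distinct $C_i$'s, the edge boundary of $C_i$ equals $e(A,C_i)$, and the splitting property gives $e(A,C_i)\le |A|$; on the other hand Theorem~\ref{thm:EdgeExpension} applied to $\{C_i,\,V(G)\sm C_i\}$ gives $e(A,C_i)\ge\frac{(k-\lambda)|C_i|(n-|C_i|)}{n}$. First this rules out a giant component: if $|C_i|>n/2$ then, using $n-|C_i|\ge|A|\ge1$, one would get $|A|>\tfrac{k-\lambda}{2}|A|$, impossible once $k-\lambda>2$. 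Hence $n-|C_i|\ge n/2$, so $|C_i|\le\frac{2|A|}{k-\lambda}$, and as a stable set in a $k$-regular graph has at most $n/2$ vertices this yields $|C_i|\le\frac{n}{k-\lambda}$. Feeding $n-|C_i|\ge n\bigl(1-\tfrac{1}{k-\lambda}\bigr)$ back in gives $e(A,C_i)\ge(k-\lambda-1)|C_i|$; summing over $i$ with $\sum_i e(A,C_i)=k|A|$ and $\sum_i|C_i|=n-|A|$ produces $k|A|\ge(k-\lambda-1)(n-|A|)$, i.e.\ $|A|\ge\frac{k-\lambda-1}{2k-\lambda-1}\,n$. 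In other words, a splitting stable set is forced to look like one side of an almost-bipartition, of size essentially $n/2$.

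To finish, I would contradict this with a second spectral estimate: Hoffman's ratio bound on the independence number. Since $A$ is a stable set in the $k$-regular graph $G$, whose smallest eigenvalue is at least $-\lambda\ge-2\sqrt{k-1}$ (as $\lambda(G)<k$, so $G$ is non-bipartite), one gets $|A|\le\frac{2\sqrt{k-1}}{k+2\sqrt{k-1}}\,n$. As $k\to\infty$ this coefficient tends to $0$, while $\frac{k-\lambda-1}{2k-\lambda-1}$ tends to $\tfrac12$ uniformly over $\lambda\le2\sqrt{k-1}$; so choosing $k$ to be a sufficiently large constant makes the two bounds on $|A|$ incompatible, a contradiction. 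Hence $G$ has no non-empty splitting stable set.

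I expect the main obstacle to be precisely that edge expansion by itself is \emph{not} sufficient: it only forces a splitting stable set to be almost a full side of a bipartition, with the complement falling into small, essentially tree-like components, and this configuration really does occur (e.g.\ in bipartite expanders), so no contradiction can come from expansion alone. The decisive extra ingredient is the ratio bound, which pins the independence number of a non-bipartite Ramanujan graph at $O(n/\sqrt k)\ll n/2$. (If one prefers to stay strictly within the tools quoted above, the same $O(n/\sqrt k)$ bound on $\alpha(G)$ follows from the expander mixing lemma, available from the same source as Theorems~\ref{thm:EdgeExpension} and~\ref{thm:Alon}, since a stable set $S$ satisfies $e(G[S])=0$ yet $e(G[S])\ge\frac{k|S|^2}{2n}-\frac{\lambda|S|}{2}$.) The ancillary point that $G\sm A$ has no giant component is routine, being itself a one-line consequence of edge expansion together with $e(A,C_i)\le|A|\le n/2$.
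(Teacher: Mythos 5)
Your proof is correct and follows the same overall strategy as the paper: take a Ramanujan-type graph of large girth from Theorem~\ref{thm:ExpanderGirth}, use the spectral bound $|A|=O(\lambda n/k)$ on stable sets, and exploit that a splitting stable set $A$ forces $e(A,C_i)\le|A|$ for each component $C_i$ of $G\sm A$, which combined with Theorem~\ref{thm:EdgeExpension} shows every component has at most $\frac{n}{k-\lambda}$ vertices (this is precisely the paper's Claim~1, which you also prove, via the same partition argument). Where you genuinely diverge is the endgame. The paper sorts the components, greedily assembles a union $B_i$ whose size lands in the window $\bigl(\frac{2\lambda}{k-\lambda}n,\frac{2\lambda+1}{k-\lambda}n\bigr]$, and applies the vertex-expansion guarantee of Theorem~\ref{thm:Alon} to get $|N(B_i)|>\frac{\lambda}{k}n$ while $N(B_i)\subseteq A$. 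You instead sum the edge-expansion inequality $e(A,C_i)\ge(k-\lambda-1)|C_i|$ over all components, use $\sum_i e(A,C_i)=k|A|$ (valid since $A$ is stable and $G$ is $k$-regular) and $\sum_i|C_i|=n-|A|$ to force $|A|\ge\frac{k-\lambda-1}{2k-\lambda-1}\,n$, and then contradict the ratio bound $|A|=O(n/\sqrt{k})$. Your accounting avoids the paper's grouping step and its auxiliary numerical conditions on $k$, and it isolates a clean intermediate statement (a splitting stable set in such an expander would have to occupy almost half the vertices), at the cost of invoking Hoffman's bound --- though, as you note, the mixing-lemma bound $\frac{\lambda}{k}n$, which is exactly the bound the paper cites, suffices. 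Both routes are of comparable length and rigor, and your verification that $|A|\le n/2$ in a $k$-regular graph and your treatment of the giant-component case are sound.
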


\begin{proof}
    Every Zykov graph is a Blanche Descarte graph as we will see in Lemma~\ref{l:ZiBD}, so we may focus on Zykov graphs. 
    Let $g \geq 4$ be an integer. By Theorem~\ref{thm:ExpanderGirth}, there exists a graph $G$ on $n$ vertices with girth at least $g$ and such that $\lambda =\lambda(G) \leq 2\sqrt{k-1}$, where $k$ is yet to be fixed. By Theorem~\ref{thm:Alon}, $G$ is an $(n,k,c)$-expander with $c= \frac{k-2\sqrt{k-1}}{2k}$.

    We fix $k$ large enough to satisfy the three following inequalities :
    \begin{enumerate}
        \item $k>\lambda$
        \item $1-\frac{\lambda}{k} \geq \frac{1}{2}$
        \item $\frac{4\lambda}{k - \lambda} \leq \frac{1}{2}$
    \end{enumerate}

    A well-known result about those graphs is the size of their stable sets. Indeed, any stable set $S\subseteq V(G)$ has size at most $\frac{\lambda}{k}n = \frac{2\sqrt{k-1}}{k}$~\cite{hoory2006expander}.

    By contradiction, assume that $G$ is Zykov, so by Theorem~\ref{th:z} it has a non-empty splitting stable set $S$ which has, by the previous remark, at most $\frac{\lambda}{k}n$ vertices.

    \begin{myclaim}
    There is no connected component $C$ in $G\setminus S$ such that $|C|>\frac{1}{k-\lambda}n$.
    \end{myclaim}
    
    \begin{proofclaim}
    Assume that there exists such a connected component $C$, and let $B = V(G)\setminus C$. By Theorem~\ref{thm:EdgeExpension}, 
    $$
    e(B,C) \geq \frac{k-\lambda}{n}|B||C|
    $$
    However, notice that the only edges between $B$ and $C$ are between $S$ and $C$, and there are at most $|S|$ of them since $S$ is splitting. Thus, $e(B,C) \leq |S|$. In addition, since $|B| \geq |S|$ and $|C|> \frac{1}{k-\lambda} n$, 
    $$
    |S| > \frac{k-\lambda}{n}|S| \frac{1}{k-\lambda}n
    $$
    which leads to a contradiction.
    \end{proofclaim}

    Let $A_1, \dots, A_\ell$ be the connected components of $G - S$, sorted in decreasing order of cardinality. By the previous claim, $|A_i| \leq \frac{1}{k - \lambda} n$ for all $1 \leq i \leq \ell$. Define $B_i = \bigcup_{1 \leq j \leq i} A_j$ for $1 \leq i \leq \ell$.
    Firstly, note that $|B_1| \leq \frac{1}{k - \lambda} n$. Secondly, note that $|B_\ell| = n-|S| \geq n-\frac{\lambda}{k}n$ and by the second inequality on $k$, we have $|B_\ell| \geq \frac{n}{2}$. Let $i \geq 2$ be the largest integer such that $|B_{i-1}| \leq \frac{2\lambda}{k - \lambda} n$, which is well-defined since $\lambda> 1$ from Theorem~\ref{thm:ExpanderGirth}. Then, adding $A_i$ to $B_{i-1}$ gives
    $$
    \frac{2\lambda}{k - \lambda} n < |B_i| \leq  \frac{2\lambda+1}{k - \lambda} n,
    $$
    since $|A_i| \leq \frac{1}{k - \lambda} n$.

    By the third inequality on $k$, we know $|B_i| \leq \frac{n}{2}$. Thus, $|N(B_i)| \geq c |B_i| > \frac{\lambda}{k} n$. However, since $N(B_i) \subseteq S$, we also have $|N(B_i)| \leq \frac{\lambda}{k} n$, which leads to a contradiction.
\end{proof}

Theorems~\ref{th:npc}, \ref{th:sssNpc} and~\ref{th:FS} may still be true under the
assumption that the input graph has high girth, but we leave this open.

\section{Blanche Descartes construction}\label{sec:BD}

Similarly to Zykov graphs, we derive a structural characterization of Blanche Descartes graphs, also based on special stable sets. In particular, as a corollary of this characterization, we show that all Blanche Descartes graphs are Zykov graphs. Then, we prove that recognizing Blanche Descartes graphs is also NP-complete. Finally, we show that both \textsc{Maximum Independent Set} and k-\textsc{Coloring} remain NP-complete on Blanche-Descartes, and therefore on Zykov graphs as well.

\subsection{Characterization and properties of Blanche Descartes graphs}
A stable set $S$ in some graph $G$ is \emph{strongly splitting} if every vertex of $S$ has at most one neighbor in each connected component of
$G\sm S$ and each vertex of $G-S$ has at most one neighbor in $S$. In the same way as splitting sets, a graph contains a strongly splitting stable set if and only if at least one of its connected component does.
\begin{lemma}
  \label{lemma:BD}
  For all graphs $G$, the following conditions are equivalent:

  \begin{enumerate}
  \item\label{i:BD} $G$ is a Blanche Descartes graph.
   \item\label{i:asBD} Every induced subgraph $H$ of $G$ contains a
    non-empty strong splitting stable set $S$ such that $H \setminus S$ is a Blanche Descartes
    graph.
  \item\label{i:esBD} $G$ contains a non-empty strong splitting stable set $S$ such that
    $G \setminus S$ is a Blanche Descartes graph.
  \end{enumerate}
\end{lemma}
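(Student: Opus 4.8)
The plan is to mirror closely the proof of Lemma~\ref{lemma:z}, adapting each implication to the stronger notion of a strongly splitting stable set and to the different inductive structure of the Blanche Descartes construction (a stable set $S$ of new vertices, matched rather than fully joined to copies of $D_k$).

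\textbf{Implication \eqref{i:BD} $\Rightarrow$ \eqref{i:asBD}.} As in Lemma~\ref{lemma:z}, it suffices to show that every connected induced subgraph $H$ of some $D_k$ contains a non-empty strongly splitting stable set, since the class is hereditary. Take $k$ minimal such that $H$ is an induced subgraph of some $D_k$. If $k=1$ the single vertex works. Otherwise, recall $D_k$ is built from a stable set $S$ on $k(n-1)+1$ vertices, with each $n$-tuple $T\subseteq S$ matched (via a matching) to a copy of $D_{k-1}$. By minimality of $k$, $H$ must meet $S$ (otherwise $H$ lies inside the disjoint union of copies of $D_{k-1}$, hence, being connected, inside a single copy of $D_{k-1}$, contradicting minimality). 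I claim $S_H := V(H)\cap S$ is a non-empty strongly splitting stable set of $H$: it is stable since $S$ is; each vertex of $S_H$ has, in $D_k$, exactly one neighbor in each copy of $D_{k-1}$ it is matched to and no other neighbors, so it has at most one neighbor in any connected component of $H\sm S_H$; and symmetrically, each vertex of $H\sm S_H$ lies in some copy of $D_{k-1}$ and is matched to at most one vertex of $S$, hence has at most one neighbor in $S_H$. Thus \eqref{i:asBD} holds.

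\textbf{Implication \eqref{i:asBD} $\Rightarrow$ \eqref{i:esBD}} is immediate (take $H=G$).

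\textbf{Implication \eqref{i:esBD} $\Rightarrow$ \eqref{i:BD}.} Suppose $G$ has a non-empty strongly splitting stable set $S=\{s_1,\dots,s_p\}$ with $G\sm S$ a Blanche Descartes graph. Let $C_1,\dots,C_\ell$ be the components of $G\sm S$. The goal is to realize $G$ as an induced subgraph of some $D_m$. Choose $k$ large enough that each $C_i$ embeds as an induced subgraph of $D_k$ and such that $D_k$ has at least $\ell$ vertices and $k(|V(D_k)|-1)+1$ is large enough to accommodate $|S|$ together with ``dummy'' matching partners; concretely, I would form $D_{k+1}$ from a stable set $S'$ on $(k+1)(|V(D_k)|-1)+1$ new vertices, each $|V(D_k)|$-tuple matched to a fresh copy of $D_k$. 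Inside each such copy of $D_k$, designate in advance a fixed vertex $z$; for the $i$-th component I will use a copy $Q_i$ of $D_k$ into which $C_i$ is embedded, and I will pick the $|V(D_k)|$-tuple $T_i\subseteq S'$ matched to $Q_i$ so that: for a vertex $s_j$ of $S$ having its (unique) neighbor in $C_i$ at vertex $x$ of $Q_i$, I put the element of $T_i$ matched to $x$ into the role of $s_j$; for all other vertices of $Q_i$, the matched element of $T_i$ is a fresh vertex of $S'$ used nowhere else. Because $S$ is \emph{strongly} splitting, each $s_j$ has at most one neighbor in each $C_i$, so this assignment is consistent across the different components and each $s_j$ gets mapped to a single well-defined vertex of $S'$; the strong condition is exactly what guarantees that in $D_{k+1}$ the image of $s_j$ is matched to at most one vertex in each $Q_i$, matching the fact that $s_j$ has at most one neighbor in each $C_i$ and (by stability, and since $S$ is a stable set matched vertex-to-vertex) that $s_j$ has no neighbor among the other $s_{j'}$. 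Non-adjacency of distinct $s_j, s_{j'}$ in $D_{k+1}$ holds because $S'$ is stable; vertices used as dummies force all $T_i$ to be distinct and avoid unwanted edges. After discarding unused vertices, $G$ is an induced subgraph of $D_{k+1}$, hence a Blanche Descartes graph.

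\textbf{Main obstacle.} The delicate point is the last implication: unlike Zykov's construction, where a new vertex is \emph{completely joined} to one vertex from each $Z_j$ and we had full freedom, here a new vertex of $S$ is joined by a \emph{matching} to a copy of $D_k$, so it can have at most one neighbor in that copy — this is precisely why the ordinary splitting condition is not enough and the strongly splitting condition (at most one neighbor of $G\sm S$ in $S$) is forced: we must check it matches the "matching" constraint on both sides. I would spend the most care verifying that the chosen $|V(D_k)|$-tuples $T_i$ are genuinely distinct $n$-tuples in $S'$ (using dummy partners to pad), that no two vertices of $S$ collide, and that no spurious edge is created between $G\sm S$ vertices in different copies. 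The counting — ensuring the stable set $S'$ is large enough, i.e. $(k+1)(|V(D_k)|-1)+1 \geq$ (number of distinct tuples needed) — is routine once $k$ is taken large enough, and I would only sketch it.
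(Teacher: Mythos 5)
Your proof is correct and takes essentially the same route as the paper's: condition (a) implies (b) via a minimal $k$ and the set $V(H)\cap S_k$, and (c) implies (a) by embedding the components of $G\sm S$ into disjoint copies of a sufficiently large $D_k$ at the next level of the construction, sending $S$ into the new stable set, padding the tuples with fresh dummy vertices, and exploiting the freedom in choosing the matchings, with the strong condition guaranteeing that the partial matchings are well-defined on the component side. The only slip is cosmetic: the stable set used to build $D_{k+1}$ has size $k(|V(D_k)|-1)+1$ rather than $(k+1)(|V(D_k)|-1)+1$, which does not affect the argument.
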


\begin{proof}
    To prove that~\eqref{i:BD} implies~\eqref{i:asBD}, it is enough to prove that every connected induced subgraph $H$ of some Blanche Descartes graph $G$ contains a non-empty strong splitting stable set. Indeed, if $G$ is a Blanche Descartes graph and $H$ and non connected induced subgraph of $G$, thus if each connected component  of $H$ has a non-empty strong splitting set, so has $H$.  So, let $H$ be a connected subgraph of some graph $D_k$ and suppose that $k$ is minimal with respect to this property. By the minimality of $k$ and connectivity of $H$, $H$ contains some vertices of the set $S_k$ used to construct $D_k$ from  the copies of $D_{k-1}$ as in the definition. Hence, $V(H)\cap S_k$ is a non-empty splitting stable set of $H$, so Condition~\eqref{i:asBD} holds.
  
  Clearly, \eqref{i:asBD} implies~\eqref{i:esBD}.

  To prove that~\eqref{i:esBD} implies~\eqref{i:BD}, suppose that $G$ has a non-empty strong splitting stable set $S$ such that $G-S$ is a Blanche Descartes graph. Let $C_1, \ldots, C_\ell$ be the connected components of $G - S$, and since $G-S$ is a Blanche Descartes graph, each of them is an induced subgraph of some Blanche Descartes graph, say $D_{k_1}, \ldots, D_{k_\ell}$ respectively. Let $k$ be an integer large enough such that $k \geq \max_{1 \leq i \leq \ell} k_i$ and $|V(D_k)| > |S|$, and let $n=|V(D_k)|$. We claim that $G$ is isomorphic to an induced subgraph of $D_{k+\ell+1}$. Consider the connected components $C_1, \ldots, C_\ell$ as induced subgraphs of $\ell$ disjoint copies of $D_{k+\ell}$. Then, for each $i =1,...,\ell$, let $S_i$ be the set of vertices of $S$ adjacent to some vertex of $C_{i}$, and add $n-n_i$ new vertices $\{c_{i,1},...,c_{i,n-n_i}\} := S_i'$, where $n_i = |S_i|$ to $G$, and add a matching between them and $S_i$. Notice that $S\cup S_1'\cup ... \cup S_\ell'$ has size at most $|S| + n\ell \leq k+n\ell$. Thus, add enough vertices to complete $S\cup S_1'\cup ... \cup S_\ell'$ into a stable set $S'$ of size $(k+\ell)n+1$. Finally, for each $n$-tuple which is not matched with a copy of $D_{k+\ell}$, add such a copy and a matching to obtain $D_{k+\ell+1}$.

\end{proof}

\begin{theorem}
  \label{th:BD}
  A graph $G$ is a Blanche Descartes graph if and only if all induced subgraphs
  of $G$ contain a non-empty strong splitting stable set.
\end{theorem}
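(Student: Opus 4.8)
The plan is to mirror the proof of Theorem~\ref{th:z} almost verbatim, replacing Lemma~\ref{lemma:z} by its Blanche Descartes analogue Lemma~\ref{lemma:BD}, and ``splitting stable set'' by ``strong splitting stable set''. So I would argue as follows. First, the forward direction: if $G$ is a Blanche Descartes graph, then by Condition~\eqref{i:asBD} of Lemma~\ref{lemma:BD} every induced subgraph $H$ of $G$ contains a non-empty strong splitting stable set $S$ (in fact one whose removal is again a Blanche Descartes graph, but we only need the existence of $S$), which is exactly the stated conclusion.

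For the converse, I would proceed by induction on $|V(G)|$. Suppose every induced subgraph of $G$ has a non-empty strong splitting stable set. If $|V(G)|=1$ then $G=D_1$ is a Blanche Descartes graph and we are done. If $|V(G)|>1$, apply the hypothesis to $G$ itself to obtain a non-empty strong splitting stable set $S$. Every induced subgraph of $G\sm S$ is an induced subgraph of $G$, hence also has a non-empty strong splitting stable set, so by the induction hypothesis $G\sm S$ is a Blanche Descartes graph. Then Condition~\eqref{i:esBD} of Lemma~\ref{lemma:BD} — applied to the pair $(G,S)$, since $S$ is a non-empty strong splitting stable set and $G\sm S$ is a Blanche Descartes graph — yields that $G$ is a Blanche Descartes graph, completing the induction.

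I do not expect a genuine obstacle here, since all the real work has been pushed into Lemma~\ref{lemma:BD} (in particular the delicate construction embedding $G$ into $D_{k+\ell+1}$ lives in the proof of \eqref{i:esBD}$\Rightarrow$\eqref{i:BD}). The only mild point of care is making sure the induction hypothesis is invoked correctly: it is a statement about all graphs on fewer vertices, and we must check that $G\sm S$ inherits the ``all induced subgraphs have a non-empty strong splitting stable set'' property, which is immediate because induced subgraphs of $G\sm S$ are induced subgraphs of $G$. One should also double-check that Lemma~\ref{lemma:BD} is stated for all graphs (including disconnected ones), which it is, so no separate handling of connectivity is needed. Thus the proof is a short formal wrap-up of the lemma, exactly parallel to Theorem~\ref{th:z}.
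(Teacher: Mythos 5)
Your proof is correct and follows essentially the same route as the paper: the forward direction is Condition~\eqref{i:asBD} of Lemma~\ref{lemma:BD}, and the converse is the same induction on $|V(G)|$ using Condition~\eqref{i:esBD}, with the (correct) observation that $G\sm S$ inherits the hypothesis since its induced subgraphs are induced subgraphs of $G$. No issues.
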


\begin{proof}
  If $G$ is a Blanche Descartes graphs, the conclusion holds by
  Condition~\eqref{i:asBD} of Lemma~\ref{lemma:BD}. Conversely, suppose that
  $G$ is such that all induced subgraphs of $G$ contain a non-empty
  strong splitting stable set.  Let us prove by induction on $|V(G)|$ that
  $G$ is a Blanche Descartes graph. If $|V(G)|=1$, $G=D_1$ is a Blanche Descartes graph. If
  $|V(G)|>1$, then by assumption $G$ contains a non-empty strong splitting
  stable set $A$.  By the induction hypothesis, $G\sm A$ is a Blanche Descartes
  graph.  Hence, by Condition~\eqref{i:esBD} of Lemma~\ref{lemma:BD}, $G$ is
  a Blanche Descartes graph.
\end{proof}

\begin{lemma}
\label{l:ZiBD}
Every Blanche Descartes graph is a Zykov graph.
\end{lemma}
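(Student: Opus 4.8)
The plan is to use the two structural characterizations already available: Theorem~\ref{th:z} says a graph is Zykov if and only if every induced subgraph has a non-empty splitting stable set, and Theorem~\ref{th:BD} says a graph is Blanche Descartes if and only if every induced subgraph has a non-empty \emph{strongly} splitting stable set. Since a strongly splitting stable set is in particular a splitting stable set (the definition of strongly splitting adds the extra requirement that each vertex outside $S$ has at most one neighbor in $S$, but retains the splitting condition), the implication is almost immediate once the characterizations are in hand.

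\textbf{Key steps.} First I would let $G$ be a Blanche Descartes graph and let $H$ be an arbitrary induced subgraph of $G$; note that $H$ is itself a Blanche Descartes graph since the class is hereditary by definition. Second, apply Theorem~\ref{th:BD} (or directly Condition~\eqref{i:asBD} of Lemma~\ref{lemma:BD}) to obtain a non-empty strongly splitting stable set $S$ in $H$. Third, observe that $S$ is then a non-empty splitting stable set of $H$: indeed the condition ``every vertex of $S$ has at most one neighbor in each connected component of $H \setminus S$'' is exactly the defining property of a splitting stable set, and it is part of the definition of strongly splitting. Fourth, since $H$ was an arbitrary induced subgraph of $G$, every induced subgraph of $G$ contains a non-empty splitting stable set, so by Theorem~\ref{th:z}, $G$ is a Zykov graph.

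\textbf{Main obstacle.} There is essentially no obstacle here: the entire content is the observation that ``strongly splitting $\Rightarrow$ splitting'', which is immediate from the two definitions. The only thing to be careful about is to invoke the \emph{hereditary} version of each characterization (every induced subgraph has the relevant stable set), rather than just the bare existence statement for $G$ itself — but both Theorem~\ref{th:z} and Theorem~\ref{th:BD} are already phrased in the hereditary form, so this is handled automatically. One could alternatively give a direct argument comparing the two inductive constructions, showing that a matching between an $n$-tuple $T$ in the stable set $S$ and a copy of $D_k$ realizes $T$ as vertices each adjacent to a single vertex of a distinct component, but routing this through the already-proven characterizations is far cleaner and I would present it that way.
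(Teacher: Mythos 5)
Your proposal is correct and follows exactly the paper's own argument: the paper also proves this lemma by observing that a strongly splitting stable set is in particular a splitting stable set and then combining Theorem~\ref{th:z} with Theorem~\ref{th:BD}. Your version merely spells out the hereditary quantification more explicitly, which is fine.
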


\begin{proof}
A strong splitting stable set is also by definition a splitting stable set. From Theorem~\ref{th:z} and Theorem~\ref{th:BD}, the result holds.
\end{proof}

Using a similar proof technique as in Lemma \ref{l:allSub}, it follows that Blanche Descartes graphs are closed under edge subdivision and taking subgraph.
\begin{lemma}
\label{l:BDallSub}
The class of Blanche Descartes graphs is closed under subdividing edges and under taking subgraphs.
\end{lemma}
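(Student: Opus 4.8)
The plan is to mirror the proof of Lemma~\ref{l:allSub} and Lemma~\ref{l:subdiv}, but using the characterization of Blanche Descartes graphs from Theorem~\ref{th:BD} in place of Theorem~\ref{th:z}, and using strong splitting stable sets in place of splitting stable sets. The key point is that a strong splitting stable set remains strong after a subdivision: if $H$ is obtained from a graph $G$ by subdividing an edge $e$, and $S$ is a strong splitting stable set of $G[X]$ for some $X\subseteq V(G)$, then in $H[X]$ (where we take $X$ to exclude the new subdivision vertex, or note that $H[X]=G[X]$ exactly when $X$ omits it) the set $S$ is still stable, every vertex of $S$ still has at most one neighbor in each component of $H[X]\setminus S$ (subdivision only lengthens paths, never merging components or increasing adjacencies), and every vertex of $H[X]\setminus S$ still has at most one neighbor in $S$ (subdivision vertices, if present, have degree~$2$ and are not adjacent to more endpoints than before). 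So strong splitting is preserved under restriction-and-subdivision, exactly as splitting is in Lemma~\ref{l:subdiv}.

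First I would prove closure under a single edge subdivision. Let $H$ be obtained from a Blanche Descartes graph $G$ by subdividing one edge, introducing a new vertex $z$. I claim every induced subgraph $H'$ of $H$ has a non-empty strong splitting stable set. If $z\notin V(H')$, then $H'$ is an induced subgraph of $G$, hence has such a set by Theorem~\ref{th:BD}. If $z\in V(H')$: since $z$ has degree at most~$2$ in $H'$, consider whether $z$ is a good candidate directly. If $z$ has degree at most~$1$ in $H'$ it forms a strong splitting stable set by itself (a degree-$\le 1$ vertex trivially satisfies both conditions), and we are done. Otherwise $z$ has exactly two neighbors $x,y$ in $H'$ and these are non-adjacent in $H'$ (they were the endpoints of the subdivided edge, whose edge is gone in $H$). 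Delete $z$ from $H'$, add the edge $xy$ back to get an induced subgraph $H''$ of $G$ (it is induced because $x,y$ are the only pair whose adjacency changed and $xy\in E(G)$); by Theorem~\ref{th:BD}, $H''$ has a non-empty strong splitting stable set $S$. If at most one of $x,y$ lies in $S$, then $S$ remains a strong splitting stable set of $H'$ (splitting $xy$ can only break $H''\setminus S$ into more components and cannot increase any vertex's neighborhood into $S$). The only problematic case is $x,y\in S$, but then $xy\in E(H'')$ contradicts $S$ being stable, so this case does not arise. Hence by Theorem~\ref{th:BD}, $H$ is a Blanche Descartes graph. Iterating gives closure under subdividing any number of edges, each any number of times; and the special case where each edge is subdivided exactly once yields a bipartite graph, which one checks directly is a Blanche Descartes graph (it is $(C_3,C_4,C_5)$-free trivially, and each side of the bipartition is a strong splitting stable set whose removal yields a stable set — indeed every such induced subgraph is again bipartite, so Theorem~\ref{th:BD} applies by induction), so in fact the full statement of Lemma~\ref{l:allSub} also transfers, though here we only need the edge-subdivision closure for the analogue of Lemma~\ref{l:allSub}.

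Then I would deduce closure under taking subgraphs exactly as in the proof following Lemma~\ref{l:subdiv}: any subgraph of a graph $G$ can be obtained from $G$ by a sequence of edge deletions and edge subdivisions, or more precisely, a subgraph $G'$ of $G$ is an induced subgraph of a graph obtained from $G$ by subdividing once each edge of $G$ not in $G'$ and then taking the induced subgraph on $V(G')$ together with noticing the subdivided vertices can be discarded — the cleanest phrasing is that of the existing proof: a subgraph is obtained by deleting vertices and subdividing edges, both of which preserve the class (deletion because the class is hereditary by definition; subdivision by the first part).

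The main obstacle, which is really the only non-routine point, is handling the new subdivision vertex $z$ when it lies in the induced subgraph under consideration: one must verify that either $z$ itself is trivially a strong splitting stable set, or that re-contracting the subdivision yields an induced subgraph of a Blanche Descartes graph whose strong splitting stable set pulls back. The verification that strong splitting survives the pullback (the stability clash $x,y\in S$ cannot occur, and the extra per-vertex condition "at most one neighbor in $S$" is not violated because subdividing never raises degrees into $S$) is the crux, but it is short. Everything else is a direct transcription of the arguments already given for Zykov graphs.
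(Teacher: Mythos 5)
Your core argument is sound and is essentially the intended one: the paper gives no separate proof of this lemma (it only says the Zykov technique transfers), and your single-subdivision analysis -- handling the new vertex $z$ by either taking $\{z\}$ when its degree is at most $1$ or re-contracting the subdivision and pulling back a strong splitting stable set of the resulting induced subgraph of $G$ -- is a correct and in fact carefully spelled-out way of making that transfer. However, one case is misjustified as written: when $z\notin V(H')$ but both endpoints $x,y$ of the subdivided edge lie in $V(H')$, the graph $H'$ is \emph{not} an induced subgraph of $G$ (the edge $xy$ is missing), so you cannot invoke Theorem~\ref{th:BD} directly; likewise your parenthetical ``$H[X]=G[X]$ exactly when $X$ omits the subdivision vertex'' is false in this situation. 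The conclusion still holds, and the fix is the observation you yourself make elsewhere: $H'=G[V(H')]-xy$, and a non-empty strong splitting stable set of $G[V(H')]$ remains strongly splitting after an edge deletion, since deleting an edge only refines components and removes adjacencies. With that one-line repair, the subdivision closure is complete, and your deduction of closure under subgraphs (heredity for vertex deletions, plus realizing an edge deletion as ``subdivide, then take the induced subgraph on the original vertices'') is correct.

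The aside at the end of your second paragraph is wrong, though: it is not true that subdividing every edge exactly once always yields a Blanche Descartes graph, and the proposed justification fails because a side of the bipartition of the $1$-subdivision is in general \emph{not} a strong splitting stable set -- any vertex of degree at least $2$ outside it has at least two neighbours inside it, violating the ``strong'' condition (this is precisely the difference with the Zykov case, where Lemma~\ref{l:bip} does hold). Indeed the paper notes that the $1$-subdivision of $K_4$ is not a Blanche Descartes graph, which is why Lemma~\ref{l:allSubBD} requires each edge to be subdivided at least twice. Fortunately this claim is extraneous to the statement you are proving, so it does not affect the validity of your proof of the lemma itself.
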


We now give some tools to prove that some graph are not Blanche Descartes graphs. We omit the proofs which are very similar to their equivalent with Zykov graphs.

\begin{lemma}
\label{l:BDd3}
If $S$ is a strong splitting set of some graph $G$, then there is no vertex $v$ in $G$ adjacent to two distinct vertices of $S$.
\end{lemma}

\begin{lemma}
\label{l:BD2c}
If $C$ is a cycle of some graph $G$, then every strong splitting stable set $S$ of $G$ contains either no vertex of $C$, or at least two vertices of $C$ at distance at least $3$ in $C$.
\end{lemma}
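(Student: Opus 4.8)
The plan is to prove Lemma~\ref{l:BD2c} by a direct case analysis on how many vertices of the cycle $C$ lie in the strong splitting stable set $S$, combined with the structural constraints forced by Lemma~\ref{l:BDd3} and the definition of a strong splitting stable set. The statement has two parts that must be excluded: (i) $S$ meets $C$ in exactly one vertex, and (ii) $S$ meets $C$ in exactly two vertices that are \emph{adjacent} on $C$ (i.e.\ at distance $1$ or $2$ along the cycle, depending on how one counts — here ``distance at least $3$ in $C$'' means the two arcs between the two chosen vertices each have at least $3$ edges, equivalently each of the two paths of $C\sm S$ between them has at least $2$ internal vertices).

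First I would handle part (i): suppose $S\cap V(C)=\{v\}$. Then $C\sm\{v\}$ is a path, hence connected, and it is entirely contained in $G\sm S$ (no other vertex of $C$ is in $S$), so it lies inside a single connected component $K$ of $G\sm S$. But $v$ has two neighbors on $C\sm\{v\}$, both in $K$, contradicting the first defining condition of a strong splitting stable set (every vertex of $S$ has at most one neighbor in each component of $G\sm S$). This is exactly the argument of Lemma~\ref{l:2c} and needs only the ``splitting'' half of the definition. Next, for part (ii): suppose $S\cap V(C)=\{u,w\}$ with $u,w$ consecutive occurrences on $C$, so that one of the two $u$--$w$ arcs of $C$ is a single edge $uw$ (distance $1$) or a path $u x w$ with one internal vertex $x$ (distance $2$). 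In the first case $uw$ is an edge inside $S$, contradicting that $S$ is stable. In the second case, the internal vertex $x$ lies in $G\sm S$ and is adjacent to both $u\in S$ and $w\in S$, which directly contradicts Lemma~\ref{l:BDd3} (no vertex of $G$ is adjacent to two distinct vertices of $S$). So in every forbidden configuration we reach a contradiction, and whenever $S$ meets $C$ it must do so in at least two vertices whose two separating arcs each contain at least two internal vertices, i.e.\ the two vertices are at distance at least $3$ in $C$.

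I would then just observe that these two cases are exhaustive for the negation of the claim: if $S$ meets $C$ in $\geq 2$ vertices but some two cyclically consecutive chosen vertices are at distance $\leq 2$, the argument of part (ii) applies to that pair; if it meets $C$ in exactly one vertex, part (i) applies. Hence either $S\cap V(C)=\emptyset$, or $|S\cap V(C)|\geq 2$ and every pair of consecutive chosen vertices is at distance $\geq 3$ on $C$, which is the assertion. The main (and only mild) obstacle is bookkeeping the meaning of ``distance at least $3$ in $C$'' consistently — making sure that ``consecutive occurrences on $C$ at distance $\leq 2$'' really does force either a chord-free edge between two members of $S$ (violating stability) or a single common neighbor of two members of $S$ (violating Lemma~\ref{l:BDd3}); once that is pinned down the proof is a two-line invocation of stability and Lemma~\ref{l:BDd3}, which is why the paper says it is ``very similar'' to the Zykov version and omits it.
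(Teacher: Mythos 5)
Your proposal is correct and essentially the same as the paper's proof: the paper invokes Lemma~\ref{l:2c} to rule out a single vertex of $C$ in $S$ (you simply re-derive that argument inline) and then excludes two chosen vertices at distance $2$ via the second condition of strong splitting, which is exactly your appeal to Lemma~\ref{l:BDd3}, with distance $1$ excluded by stability in both cases. The only difference is presentational bookkeeping, not substance.
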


\begin{proof}
Let $S$ be a strong splitting stable set of $G$. By Lemma \ref{l:2c}, $S$ contains either no vertex of $C$, or at least two of them. If $S$ contains two of them at distance $2$, then there exists a vertex of $G-S$ adjacent to two vertices of $S$, which contradict the definition of strongly splitting stable sets.
\end{proof}

\begin{figure}
\centering
\begin{tikzpicture}

\node[draw,circle, fill =ForestGreen] (0) at (0,0) [label=left:$a$]{};
\node[draw,circle, fill =red] (1) at (1,0) {};
\node[draw,circle, fill =red] (2) at (2,0) {};
\node[draw,circle, fill =ForestGreen] (3) at (3,0) [label=right:$a'$]{};

\node[draw,circle, fill =red] (4) at (1,1) {};
\node[draw,circle, fill =red] (5) at (2,1) {};
\node[draw,circle, fill =red] (6) at (1,-1) {};
\node[draw,circle, fill =red] (7) at (2,-1) {};

\draw (0)--(1)--(2)--(3);
\draw (0)--(4)--(5)--(3);
\draw (0)--(6)--(7)--(3);

\end{tikzpicture}

  \caption{Graph $L$\label{f:npc-L}}
\end{figure}

\begin{lemma}
\label{l:BDL}
If a graph $G$ contains the graph $L$ represented on Figure \ref{f:npc-L}, then for every strongly splitting stable set $S$ of $G$, $S\cap V(L)=\emptyset$ or $S\cap V(L) = \{a,a'\}$.
\end{lemma}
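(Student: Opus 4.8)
The plan is to argue exactly as in the proof of Lemma~\ref{l:h}, using the cycle constraint for strongly splitting stable sets (Lemma~\ref{l:BD2c}) together with the degree constraint (Lemma~\ref{l:BDd3}). Label the graph $L$ as in Figure~\ref{f:npc-L}: the two green vertices are $a$ and $a'$, and the three internal paths from $a$ to $a'$ are $a\,p_1\,p_2\,a'$, $a\,q_1\,q_2\,a'$, $a\,r_1\,r_2\,a'$ (the six red vertices), so that $L$ consists of three $5$-cycles pairwise sharing the edge-free pair $\{a,a'\}$; more precisely any two of these paths form a $6$-cycle through $a$ and $a'$.

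First I would show that if $S$ is a strongly splitting stable set of $G$ and $S\cap V(L)\neq\emptyset$, then $a\in S$ or $a'\in S$. Suppose some internal vertex, say $p_1$, lies in $S$. The $6$-cycle $C = a\,p_1\,p_2\,a'\,q_2\,q_1\,a$ is a cycle of $G$, so by Lemma~\ref{l:BD2c} it contains at least two vertices of $S$ at distance at least $3$ along $C$. The vertices of $C$ at distance at least $3$ from $p_1$ are exactly $a'$ and $q_2$. If $q_2\in S$, then the $6$-cycle $a\,p_1\,p_2\,a'\,r_2\,r_1\,a$ contains $p_1\in S$ but its only vertices at distance $\geq 3$ from $p_1$ are $a'$ and $r_2$; since $S$ is stable and $q_2$ already forbids... — more directly: consider instead that $S$ is stable, so $p_1\in S$ forbids $a\in S$ and $p_2\in S$; applying Lemma~\ref{l:BD2c} to the three $6$-cycles through $p_1$ shows that each forces $a'$ or the ``opposite'' internal vertex into $S$, and a short case analysis (at most two internal vertices can be simultaneously in $S$ without two of them being adjacent or being at distance $2$ on some cycle, which Lemma~\ref{l:BDd3} forbids via their common neighbour) pins down $a'\in S$. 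A symmetric argument handles the case where the starting vertex of $S\cap V(L)$ is $a$ or $a'$ directly.

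Next, once we know $a\in S$ (the case $a'\in S$ is symmetric, and by the previous paragraph one of them is in $S$ whenever $S\cap V(L)\neq\emptyset$), I claim $a'\in S$ and no red vertex is in $S$. Since $a\in S$ and $S$ is stable, none of $p_1,q_1,r_1$ is in $S$. Apply Lemma~\ref{l:BD2c} to the $6$-cycle $a\,p_1\,p_2\,a'\,q_2\,q_1\,a$: it contains $a\in S$, so it contains a second vertex of $S$ at distance $\geq 3$ from $a$ on this cycle, i.e.\ one of $p_2, a', q_2$. Doing this for all three $6$-cycles and using that $p_2$ and $r_2$ are both adjacent to $a'$ (so Lemma~\ref{l:BDd3} forbids two of $\{p_2,q_2,r_2\}$ together with... ) — the cleanest route is: if $p_2\in S$ then the cycle $a\,p_1\,p_2\,a'\,q_2\,q_1\,a$ would need, besides $a$, a vertex at distance $\geq 3$ from $a$; $p_2$ is at distance $2$ from $a$, so this does not help, and we still need $a'$ or $q_2$; iterating, at least two of $\{p_2,q_2,r_2,a'\}$ lie in $S$, but any two of $p_2,q_2,r_2$ have the common neighbour $a'$, contradicting Lemma~\ref{l:BDd3}, so $a'\in S$. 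Finally, with $a,a'\in S$ stable, the red vertices $p_1,q_1,r_1,p_2,q_2,r_2$ each have a neighbour in $\{a,a'\}$, so none of them can be in $S$ (stability), and also each internal vertex already has its unique allowed neighbour in $S$. Hence $S\cap V(L)=\{a,a'\}$, which is the desired conclusion.

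The main obstacle is the case analysis in the first two paragraphs: Lemma~\ref{l:BD2c} only gives ``two vertices at distance $\geq 3$ on the cycle'', which for a $6$-cycle means antipodal vertices, so one must be careful to combine the three $6$-cycles of $L$ correctly and to invoke Lemma~\ref{l:BDd3} (via the common neighbour $a$ or $a'$ of two internal vertices on the same side) to rule out configurations where several red vertices would end up in $S$. Since the excerpt explicitly says the proof is ``very similar to its equivalent with Zykov graphs'', I would in the final writeup simply say: ``Clear by several applications of Lemmas~\ref{l:BD2c} and~\ref{l:BDd3},'' mirroring the one-line proof of Lemma~\ref{l:h}.

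\begin{proof}
  Clear by several applications of Lemmas~\ref{l:BD2c} and~\ref{l:BDd3}.
\end{proof}
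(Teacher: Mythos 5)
Your approach is the right one and it matches the paper's: the paper in fact omits this proof altogether (declaring it analogous to Lemma~\ref{l:h}), i.e.\ a few applications of Lemmas~\ref{l:BD2c} and~\ref{l:BDd3} to the $6$-cycles of $L$, which is exactly your plan and your final one-line proof. Be aware, though, that several of your intermediate distance claims are wrong: in the $6$-cycle $a\,p_1\,p_2\,a'\,q_2\,q_1\,a$ the vertex $a'$ is at distance $2$ from $p_1$, so the only vertex at distance $\geq 3$ from $p_1$ is its antipode $q_2$; likewise the only vertex at distance $\geq 3$ from $a$ on that cycle is $a'$ (not $p_2$ or $q_2$); and there are only two, not three, $6$-cycles of $L$ through $p_1$ (also $L$ contains $6$-cycles, not $5$-cycles). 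These slips merely enlarge your candidate sets, and your eliminations via the common neighbours $a$, $a'$ (Lemma~\ref{l:BDd3}) still dispose of the extra cases, so the argument survives; with the correct distances it is in fact shorter: if a red vertex, say $p_1$, were in $S$, the two $6$-cycles through it would force both $q_2$ and $r_2$ into $S$, which clash at their common neighbour $a'$, so no red vertex lies in $S$, and then $a\in S$ (or $a'\in S$) forces its antipode $a'$ (resp.\ $a$) directly, giving $S\cap V(L)=\{a,a'\}$.
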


Notice that any vertex of degree at most $1$ is also a strong splitting stable set. Since any forest has such a vertex, they are all Blanche Descartes graphs.
\begin{lemma}
\label{l:BDf}
All forests are Blanche Descartes graphs.
\end{lemma}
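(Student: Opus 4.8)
The plan is to use the structural characterization from Theorem~\ref{th:BD}: a graph is a Blanche Descartes graph if and only if every induced subgraph contains a non-empty strongly splitting stable set. So it suffices to show that every induced subgraph of a forest contains a non-empty strongly splitting stable set. Since an induced subgraph of a forest is again a forest, the whole statement reduces to the single claim: every non-empty forest contains a non-empty strongly splitting stable set.

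For this, I would take a vertex $v$ of degree at most $1$ (which exists in any non-empty forest, e.g. a leaf or an isolated vertex), and argue that $S = \{v\}$ is strongly splitting. The set $\{v\}$ is trivially stable. For the first condition of strongly splitting, $v$ has at most one neighbor total, hence at most one neighbor in each connected component of $G \sm S$. For the second condition, every vertex of $G \sm S$ has at most one neighbor in $S$ simply because $|S| = 1$. Both conditions hold, so $\{v\}$ is a non-empty strongly splitting stable set, as required.

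Putting it together: let $G$ be a forest and $H$ any induced subgraph. If $H$ is empty there is nothing to check in the recursion base; otherwise $H$ is a non-empty forest and by the above it has a non-empty strongly splitting stable set. Hence by Theorem~\ref{th:BD}, $G$ is a Blanche Descartes graph. I do not anticipate any real obstacle here; the only thing to be slightly careful about is the edge case of isolated vertices (degree $0$ rather than degree exactly $1$), which the phrasing "degree at most $1$" already covers, exactly as in the proof of Lemma~\ref{l:fz}. In fact, the remark preceding the statement in the excerpt already records precisely this argument, so the proof is essentially a one-line appeal to Theorem~\ref{th:BD} together with the observation that a vertex of degree at most $1$ forms a strongly splitting stable set.
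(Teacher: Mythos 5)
Your proof is correct and matches the paper's own argument, which is exactly the remark preceding the lemma: a vertex of degree at most~$1$ forms a strongly splitting stable set, and this, via Theorem~\ref{th:BD} (since induced subgraphs of forests are forests), gives the result. Nothing to add.
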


\begin{lemma}
  \label{l:allSubBD}
  Let $G$ be any graph (possibly not Blanche Descartes). If $H$ is obtained by
  subdividing all edges of $G$ at least twice, then $H$ is a Blanche Descartes
  graph.
\end{lemma}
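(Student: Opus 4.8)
The plan is to exhibit one explicit non-empty strongly splitting stable set of $H$ whose removal leaves a forest, and then to conclude with Lemma~\ref{l:BDf} and Condition~\eqref{i:esBD} of Lemma~\ref{lemma:BD}. The set I would use is $S=V(G)$, the set of original vertices of $G$ seen as vertices of $H$. First I would deal with the degenerate case: if $G$ has no edge then $H=G$ is a forest and we are done by Lemma~\ref{l:BDf}, so from now on $G$ has an edge, and in particular $S\neq\emptyset$. Since every edge of $G$ has been subdivided at least once, the vertices of $S$ are pairwise non-adjacent in $H$, so $S$ is a non-empty stable set of $H$.

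Next I would analyse $H\setminus S$. Removing the original vertices cuts each subdivided edge into the path formed by the vertices inserted on it, and these paths are exactly the connected components of $H\setminus S$; each of them has at least two vertices because each edge of $G$ is subdivided at least twice. Hence $H\setminus S$ is a disjoint union of paths, that is, a forest, which is a Blanche Descartes graph by Lemma~\ref{l:BDf}.

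It then remains to verify the two requirements in the definition of a strongly splitting stable set. For the first, let $u\in S$ and let $C$ be a component of $H\setminus S$, say the set of subdivision vertices of an edge $e$ of $G$: if $u$ is not an endpoint of $e$ then $u$ has no neighbour in $C$, and if $u$ is an endpoint of $e$ then the unique neighbour of $u$ in $C$ is the subdivision vertex of $e$ adjacent to $u$. For the second, a vertex $x$ of $H\setminus S$ is a subdivision vertex lying on a unique edge $uv$ of $G$, and its two neighbours in $H$ are its neighbours along the subdivided $u$--$v$ path, of which at most one can be an endpoint of that path, i.e.\ lie in $S$. This last point is exactly where the hypothesis of at least two subdivisions per edge is used: if $uv$ were subdivided only once, its unique subdivision vertex would be adjacent to both $u$ and $v$, violating the second requirement. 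With both requirements checked, Condition~\eqref{i:esBD} of Lemma~\ref{lemma:BD} gives that $H$ is a Blanche Descartes graph.

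The argument is short and I do not expect a real obstacle; the only genuine content is the choice $S=V(G)$ — naive alternatives, such as picking one subdivision vertex per edge or taking the subdivision vertices around a fixed original vertex, satisfy stability and the first condition but fail the second — together with the remark that ``at least twice'' is precisely what makes the second condition hold, in contrast with the ``at least once'' that suffices for the Zykov analogue (Lemma~\ref{l:allSub}), and with the minor care needed for the edgeless case. One could alternatively first reduce to the case where every edge is subdivided exactly twice, using that Blanche Descartes graphs are closed under edge subdivision (Lemma~\ref{l:BDallSub}), but this detour is unnecessary.
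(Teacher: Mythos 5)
Your proof is correct and takes essentially the same route as the paper: the set of original vertices of $G$ is a non-empty strong splitting stable set of $H$ whose removal leaves a forest, and Lemmas~\ref{lemma:BD} and~\ref{l:BDf} conclude. The only (harmless) difference is that you verify the strong-splitting conditions directly for any number of subdivisions, whereas the paper checks the exactly-twice case and then invokes closure under subdivision (Lemma~\ref{l:BDallSub}).
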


\begin{proof}
  If each edge of $G$ is subdivided exactly twice, then $H$ is
  Blanche Descartes because original vertices of $G$  form a strong splitting stable set whose removal yields a forest of edges, so the result follows from Lemmas~\ref{lemma:BD} and~\ref{l:BDf}. If there
  are more subdivisions, then the result follows from Lemma~\ref{l:BDallSub}. 
\end{proof}

Observe that subdividing twice in Lemma~\ref{l:allSubBD} is best possible, since for instance subdividing once each edge of $K_4$ yields a non-Blanche-Descartes graph.  Similarly to Zykov graphs, Theorem~\ref{th:BD} can be used to describe Blanche Descartes using a monadic second order formula. Hence the following holds.

\begin{theorem}\label{thm:BDFPT}
    There exists an algorithm that, given an input graph $G$, decides if $G$ is a Blanche Descartes graph in time $f(\text{tw}(G)) \cdot n$, where $n$ is the number of vertices of $G$, $tw(G)$ its treewidth and $f$ a computable function.
\end{theorem}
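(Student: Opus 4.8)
The plan is to mirror exactly the proof of Theorem~\ref{thm:ZykovFPT}, invoking Courcelle's theorem on monadic second-order (MSO) logic over graphs. By Theorem~\ref{th:BD}, a graph $G$ is a Blanche Descartes graph if and only if every induced subgraph of $G$ contains a non-empty strongly splitting stable set. This is a statement that can be written as an MSO formula $\Psi$ in the incidence or adjacency encoding of $G$, and since the class of graphs of treewidth at most $w$ admits linear-time model-checking for any fixed MSO sentence (Courcelle's theorem, plus the $2^{O(\text{tw})}\cdot n$ algorithm for computing a tree-decomposition cited earlier), the result follows immediately once $\Psi$ is exhibited. So the whole content is to spell out $\Psi$.

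First I would recall that ``$X$ is a stable set'', ``$G[X]$ is connected'', and ``$Y \subseteq X$'' are all standard MSO-expressible predicates on vertex sets (connectivity uses the usual trick: there is no partition of $X$ into two non-empty parts with no edge between them). Then I would write the outer quantifier: for all vertex subsets $V'$, there exists $S \subseteq V'$ such that $S$ is non-empty, $S$ is stable, and $S$ is strongly splitting inside $G[V']$. The ``strongly splitting'' condition unfolds into two parts, both quantifying only over sets/vertices, hence MSO: (i) for every connected subset $C$ of $V' \setminus S$, every vertex of $S$ has at most one neighbour in $C$ — ``at most one neighbour in $C$'' is $\forall u_1 \forall u_2 \big( (u_1 \in C \wedge u_2 \in C \wedge u_1 \sim v \wedge u_2 \sim v) \rightarrow u_1 = u_2\big)$; and (ii) every vertex of $V' \setminus S$ has at most one neighbour in $S$, written the same way. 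Conjoining these over the appropriate quantifiers gives $\Psi$, and a graph satisfies $\Psi$ exactly when it meets the criterion of Theorem~\ref{th:BD}.

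There is essentially no obstacle here; the only thing to be slightly careful about is that the outer universal quantification over induced subgraphs $V'$ is what makes the sentence say ``\emph{every} induced subgraph has a non-empty strongly splitting stable set'' rather than just ``$G$ does'', which is precisely what Theorem~\ref{th:BD} requires — the analogous point already arises in the proof of Theorem~\ref{thm:ZykovFPT}, so I would just state it at the same level of detail. Hence the proof can be given as a short paragraph: note that by Theorem~\ref{th:BD} membership in the class is characterised by an MSO-expressible property (differing from the Zykov formula only in replacing ``splitting'' by ``strongly splitting'', i.e.\ adding conjunct (ii) above), apply Courcelle's theorem together with the linear-time tree-decomposition algorithm, and conclude. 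I expect the write-up to be essentially a two-to-three sentence remark pointing back to the proof of Theorem~\ref{thm:ZykovFPT}.
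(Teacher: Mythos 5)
Your proposal is correct and follows exactly the paper's route: the paper also justifies this theorem by noting that the characterization of Theorem~\ref{th:BD} (every induced subgraph contains a non-empty strongly splitting stable set) is expressible in monadic second-order logic, just as in the proof of Theorem~\ref{thm:ZykovFPT}, and then invoking Courcelle's theorem. Your explicit handling of the extra conjunct (each vertex outside $S$ has at most one neighbour in $S$) is precisely the only modification needed, so the two arguments coincide.
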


\subsection{NP-hardness}\label{sec:BDnpc}

\begin{theorem}\label{th:BDnpc}
    Recognizing Blanche Descartes graphs is NP-complete.
\end{theorem}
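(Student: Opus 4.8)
The plan is to mirror the structure of the proof of Theorem~\ref{th:npc}, adapting each gadget so that the relevant forced behaviour is now expressed in terms of \emph{strong} splitting stable sets rather than splitting stable sets. First I would argue membership in NP exactly as before: by Condition~\eqref{i:esBD} of Lemma~\ref{lemma:BD} and an easy induction, a graph $G$ is a Blanche Descartes graph if and only if its vertex set can be partitioned into stable sets $S_1,\dots,S_n$ such that each $S_k$ is a strong splitting stable set of $G[S_{k+1}\cup\dots\cup S_n]$; since testing the strong splitting property is polynomial, the ordered partition is a polynomial certificate. Then I would again reduce from 3-{\sc sat}, building a graph $G_{\mathcal I}$ from a variable gadget, a clause gadget, and a forcing gadget, but now using the graph $L$ of Figure~\ref{f:npc-L} (via Lemma~\ref{l:BDL}) in place of the graph $H$ to fix which coloured vertices are never, resp.\ always, in a non-empty strong splitting stable set, and using Lemma~\ref{l:BD2c} (two vertices at cyclic distance $\ge 3$) in place of Lemma~\ref{l:2c} when propagating through cycles.

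The key steps, in order, would be: (i) describe the forcing gadget $L$ with its two green vertices $a,a'$ and six red vertices, attach every other ``red'' vertex to $\{a,a'\}$ and every other ``green'' vertex to $\{$the two red neighbours of $a\}$ so that Lemma~\ref{l:BDL} forces all red vertices out of $S$ and, once one green vertex is in $S$, all green vertices into $S$; (ii) redesign the variable gadget $G_i$ so that, when nonempty, a strong splitting stable set must contain exactly one of $t_i,f_i$ and a designated green vertex --- here the $5$-cycle of Figure~\ref{f:npc-Gi} may need to be enlarged (e.g.\ to a $7$- or $8$-cycle with extra pendant structure) so that the ``distance $\ge 3$ in $C$'' condition of Lemma~\ref{l:BD2c} and the ``no common neighbour of two elements of $S$'' condition of Lemma~\ref{l:BDd3} are simultaneously satisfiable by a true/false choice; (iii) redesign the clause gadget $G_{C_j}$ analogously, replacing the $5$-cycles attached to the central cycle by longer cycles and rerouting the identifications with the green forcing vertex so that the propagation via Lemma~\ref{l:BD2c} still selects at least one of three ``literal'' vertices; (iv) connect the literal vertices of the clause gadgets to the $t_i/f_i$ vertices by long enough paths (subdivided at least twice, which is harmless by Lemma~\ref{l:BDallSub}) that the strong splitting constraints transmit the intended implication ``if the selected literal is false, a cycle in $G_{C_j}$ violates Lemma~\ref{l:BD2c}''.

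For the correctness argument I would prove, as before: from a satisfying assignment, build $S$ containing all green vertices, the appropriate $t_i$ or $f_i$, one literal vertex per clause, and padding vertices from the long cycles, then check directly that $S$ is stable, that every vertex outside $S$ has at most one neighbour in $S$ (the new condition --- this is where the twice-subdivided connecting paths and the extra cycle length pay off), and that $G_{\mathcal I}\setminus S$ is a forest of isolated vertices and edges, hence Blanche Descartes by Lemma~\ref{l:BDf}, so $G_{\mathcal I}$ is Blanche Descartes by Condition~\eqref{i:esBD} of Lemma~\ref{lemma:BD}; conversely, from a non-empty strong splitting stable set $S$ of $G_{\mathcal I}$ (which exists by Theorem~\ref{th:BD} if $G_{\mathcal I}$ is Blanche Descartes), use Lemmas~\ref{l:BDL},~\ref{l:BDd3},~\ref{l:BD2c} to show $S$ contains no red vertex, all green vertices, exactly one of $t_i,f_i$, and at least one true literal per clause, yielding a satisfying assignment.

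The main obstacle I expect is step (ii)--(iii): the strong splitting condition is strictly more restrictive than the splitting one (Lemma~\ref{l:BDd3} forbids \emph{any} vertex with two neighbours in $S$, and Lemma~\ref{l:BD2c} forbids chosen cycle vertices at cyclic distance $2$), so the compact $5$- and $6$-cycle gadgets of the Zykov reduction will generically be over-constrained and admit \emph{no} non-empty strong splitting stable set regardless of the formula. Getting the cycle lengths and the attachment points of the forcing gadget exactly right --- large enough to leave a degree of freedom encoding the truth value, small enough that the only freedom is that truth value --- is the delicate part; once the gadgets are correctly dimensioned, the rest of the argument is a routine adaptation of the proof of Theorem~\ref{th:npc}, which is why I would, like the authors presumably do, present the full construction and verification in detail but lean on the earlier proof for the overall template.
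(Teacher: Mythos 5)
There is a genuine gap: your proposal is a template, not a proof, and the one concrete design decision you do commit to is the one that fails. You explicitly leave steps (ii)--(iv) -- the actual variable and clause gadgets, their cycle lengths, and the attachment points -- as ``delicate'' work still to be done, but those gadgets \emph{are} the content of the hardness proof; saying they ``may need to be enlarged to a $7$- or $8$-cycle with extra pendant structure'' does not establish that such gadgets exist or that the forward and backward directions both go through. (For the record, the paper does exactly this: a $7$-cycle variable gadget, a clause gadget on $35$ vertices built from an $8$-cycle and three $7$-cycles with pendant red vertices $a_{j,k},e_{j,k}$, and a careful verification that the intended set $S$ is stable, that $G_{\mathcal I}-S$ is a forest, and that all vertices of $S$ are pairwise at distance at least $3$.)

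Moreover, your step (i) is not just unfinished but wrong as stated. Mimicking the Zykov forcing gadget by making every other red vertex adjacent to $\{a,a'\}$ and every other green vertex adjacent to red neighbours of $a$ is incompatible with the \emph{strong} splitting condition in the forward direction: in the intended set $S$ both $a$ and $a'$ (and all green vertices) must lie in $S$, so a red vertex adjacent to both $a$ and $a'$ is a vertex of $G-S$ with two neighbours in $S$, and a green vertex attached to a red neighbour of $a$ gives that red vertex two green neighbours in $S$; either way Lemma~\ref{l:BDd3} is violated and the satisfiable instances would \emph{not} yield Blanche Descartes graphs. This is precisely the difficulty the paper's construction is built to avoid: instead of a single copy of $L$ with extra adjacencies, it uses a star-shaped gadget $L_{k,\ell}$ consisting of $k+\ell$ copies of $L$ sharing one central green vertex $\alpha$, with one green port or $\gamma$-vertex per copy, and the coloured vertices of the variable and clause gadgets are \emph{identified} with the ports rather than joined to a fixed pair; green vertices stay pairwise at distance at least $3$, red ports are excluded from $S$ by a separate distance-$2$ argument through the $\beta_i$'s, and no vertex ever acquires two neighbours in $S$. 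Without such a mechanism (or an equivalent one), your reduction does not produce a Blanche Descartes graph from a satisfiable formula, so the proposal as written does not prove the theorem.
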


\begin{proof}
The notations and the method of proof are similar to the proof of Theorem~\ref{th:npc} but the details differ. We  reduce 3-{\sc sat} to the problem of recognizing Blanche Descartes graphs.
Consider an instance $\mathcal I$ of 3-{\sc sat} made of $n$ variables
$x_1$, \dots, $x_n$ and $m$ clauses $C_1, \dots, C_m$ each on three
variables.  For each $j=1, \dots, m$, 
$C_j=y_{j, 1} \vee y_{j, 2} \vee y_{j, 3}$ where for all $k=1, 2, 3$,
there exists $1\leq i \leq n$ such that $y_{j, k} = x_i$ or
$y_{j, k} = \overline{x_i}$.

We define a graph $G_{\mathcal I}$ depending on $\mathcal I$.  To make
the explanations easier, a color (either red or green) is assigned to
some vertices of $G_{\mathcal I}$.  The red vertices will turn out to
be in no strong splitting stable set of $G_{\mathcal I}$ and the green
vertices in all non-empty strong splitting stable set of $G_{\mathcal
  I}$. The presence of uncolored vertices in some non-empty strong splitting
stable set will depend on $\mathcal I$.

Prepare a copy of the gadget $L_{k,\ell}$. It is obtained by taking $\ell+k$ copies of the graph $L$, identifying one green vertex per copy into a common vertex called $\alpha$. The other green vertices are called $\alpha_1,...,\alpha_\ell$ and $\gamma_1,...,\gamma_k$. Then, add $k$ red vertices $\beta_1,...,\beta_k$ and for all $i\in \{1,...,k\}$ add a red vertex adjacent to both $\gamma_i$ and $\beta_i$. $L_{k,\ell}$ is depicted on Figure~\ref{f:npc-BDLkl}, where $k$ and $\ell$ are yet to be fixed, and will be exactly the number of red and green vertices respectively in the rest of the graph. The vertices $\alpha_1,...,\alpha_\ell$ (resp.\ $\beta_1,...,\beta_k$) are called the green ports (resp.\ red ports). The other vertices are the \textit{private} vertices of $L_{k,\ell}$.

For each variable $x_i$, prepare a graph $G_{i}$ on 7 vertices
$t_i$, $f_i$, $b_{i, k}$ for $k\in \{1,...,5\}$ (see Figure~\ref{f:npc-BD-Gi}).  Add edges in
such way that $t_ib_{i,1}...b_{i,5}f_it_i$ is a cycle. Give color
green to vertex $b_{i, 3}$ and give color red to the other vertices except $t_i$ and $f_i$, which have no color.

\begin{figure}
\centering
    \begin{tikzpicture}
    \node[draw, circle, fill = ForestGreen] (0) at (0,0) [label=left: $\alpha$]{};

    \node[draw, circle, fill = ForestGreen] (2) at (-3,1) [label = below:$\alpha_1$]{};

    \node[draw, circle, fill = ForestGreen] (3) at (-3,-1) [label = below:$\alpha_\ell$]{};
    \foreach \i in {-1,0,1}{
        \node[draw, circle, fill = red] (\i-3) at (-1,1+\i*0.5){};
        \node[draw, circle, fill = red] (\i-4) at (-2,1+\i*0.5){};
        \draw (0) -- (\i-3) -- (\i-4) -- (2) ;
        
        \node[draw, circle, fill = red] (\i-5) at (-1, \i*0.5-1){};
        \node[draw, circle, fill = red] (\i-6) at (-2, \i*0.5-1){};
        \draw (0) -- (\i-5) -- (\i-6) -- (3) ;  
    }

    \node[draw, circle, fill = ForestGreen] (4) at (3,1) [label = below:$\gamma_1$]{};

    \node[draw, circle, fill = ForestGreen] (5) at (3,-1) [label = below:$\gamma_k$]{};
    \foreach \i in {-1,0,1}{
        \node[draw, circle, fill = red] (\i-3) at (1,1+\i*0.5){};
        \node[draw, circle, fill = red] (\i-4) at (2,1+\i*0.5){};
        \draw (0) -- (\i-3) -- (\i-4) -- (4) ;
        
        \node[draw, circle, fill = red] (\i-5) at (1, \i*0.5-1){};
        \node[draw, circle, fill = red] (\i-6) at (2, \i*0.5-1){};
        \draw (0) -- (\i-5) -- (\i-6) -- (5) ;  
    }

    \node[draw, circle, fill = red] (6) at (4,1) {};
    \node[draw, circle, fill = red] (7) at (5,1) [label=right:$\beta_1$]{};
    \draw (4)--(6)--(7);

    \node[draw, circle, fill = red] (6bis) at (4,-1) {};
    \node[draw, circle, fill = red] (7bis) at (5,-1) [label=right:$\beta_k$]{};
    \draw (5)--(6bis)--(7bis);

    \end{tikzpicture}
\caption{The gadget $L_{k,\ell}$. }\label{f:npc-BDLkl}
\end{figure}
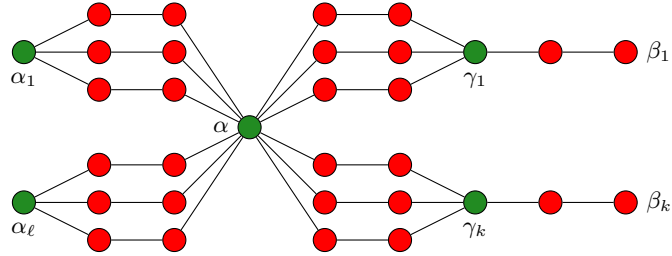

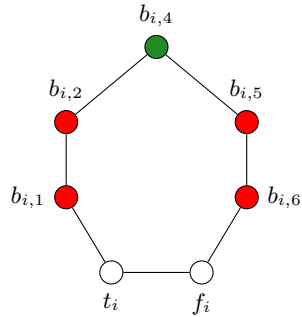
\begin{figure}
\centering 
  
\begin{tikzpicture}

\node[draw, circle, fill = ForestGreen] (1) at (0,3) [label=above:$b_{i,4}$]{};
\node[draw, circle, fill = red] (2) at (1.2,2) [label=above:$b_{i,5}$]{};
\node[draw, circle, fill = red] (3) at (1.2,1) [label=right:$b_{i,6}$]{};

\node[draw, circle] (4) at (0.6,0) [label=below:$f_i$]{};
\node[draw, circle] (5) at (-0.6,0) [label=below:$t_i$]{};
\node[draw, circle, fill = red] (6) at (-1.2,1) [label=left:$b_{i,1}$]{};
\node[draw, circle, fill = red] (7) at (-1.2,2) [label=above:$b_{i,2}$]{};

\draw (1) -- (2)--(3)--(4)--(5)--(6)--(7)--(1);
\end{tikzpicture}
\caption{Graph $G_i$}\label{f:npc-BD-Gi} 
\end{figure}

For each clause $C_j$, prepare a graph $G_{C_j}'$ on 35 vertices with 4 green vertices and 22 red vertices as in Figure~\ref{f:npc-GCj'}. 

For every $j=1, \dots, m$ and every $k=1, 2, 3$, let $i\in \{1, \dots,
n\}$ be such that $y_{j, k}=x_i$ or $y_{j, k} = \overline{x_i}$.  If
$y_{j, k}=x_i$ (resp. $y_{j,k}=\overline{x_i}$), add the edge $f_ie_{j,k}$ (resp. $t_id_{j, k, 7}$). 

Let us now fix the values of $k$ and $\ell$ of $L_{k, \ell}$, which correspond to the number of green and red vertices constructed so far, respectively (apart from those of $L_{k, \ell}$). That is, $k=4m+n$ and $\ell=22m+4n$. Now, pick an arbitrary bijection between the green (resp. red) vertices and $\alpha_1$, $\dots$, $\alpha_\ell$ (resp. $\beta_1$, $\dots$, $\beta_{\ell}$) of $L_{k,\ell}$, and identify the two mapped vertices.

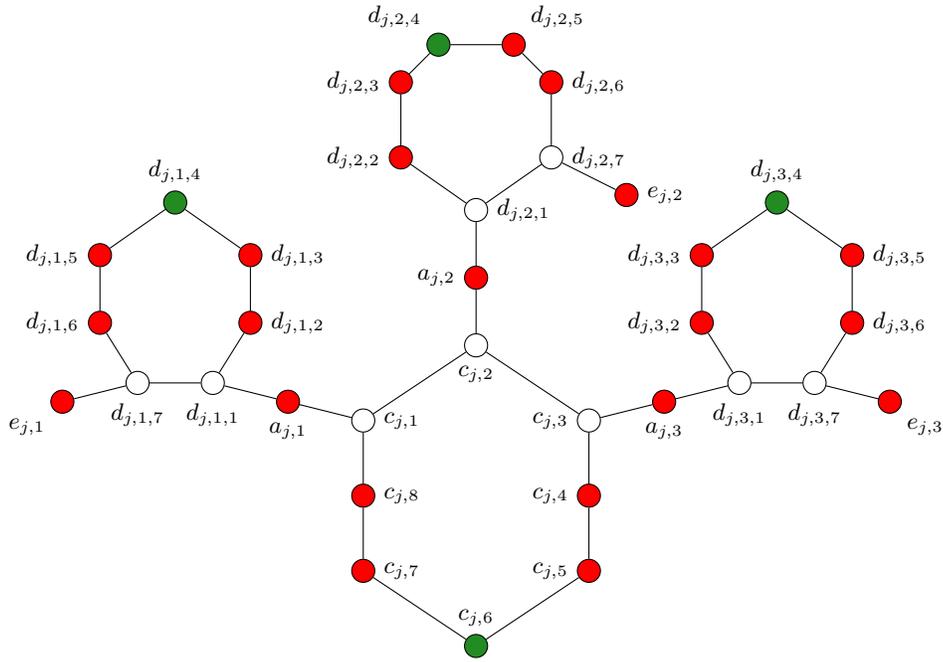
\begin{figure}
\centering

\begin{tikzpicture}
\node[draw, circle] (1) at (0,2.5) [label=below:$c_{j,2}$]{};
\node[draw, circle] (2) at (1.5,1.5) [label=left:$c_{j,3}$]{};
\node[draw, circle, fill = red] (3) at (1.5,0.5) [label=left:$c_{j,4}$]{};
\node[draw, circle, fill = red] (3bis) at (1.5,-0.5) [label=left:$c_{j,5}$]{};
\node[draw, circle, fill = ForestGreen] (4) at (0,-1.5) [label=above:$c_{j,6}$]{};
\node[draw, circle, fill = red] (5) at (-1.5,-0.5) [label=right:$c_{j,7}$]{};
\node[draw, circle, fill = red] (5bis) at (-1.5,0.5) [label=right:$c_{j,8}$]{};
\node[draw, circle] (6) at (-1.5,1.5) [label=right:$c_{j,1}$]{};

\draw (1) -- (2) -- (3) -- (3bis) -- (4) -- (5) -- (5bis) -- (6) -- (1);

\node[draw, circle] (7) at (3.5,2) [label=below:$d_{j,3,1}$]{};
\node[draw, circle] (8) at (4.5,2) [label=below:$d_{j,3,7}$]{};
\node[draw, circle, fill = red] (9) at (5,2.8) [label=right:$d_{j,3,6}$]{};
\node[draw, circle, fill = red] (9bis) at (5,3.7) [label=right:$d_{j,3,5}$]{};
\node[draw, circle, fill = ForestGreen] (10) at (4,4.4) [label=above:$d_{j,3,4}$]{};
\node[draw, circle, fill = red] (10bis) at (3,3.7) [label=left:$d_{j,3,3}$]{};
\node[draw, circle, fill = red] (11) at (3,2.8) [label=left:$d_{j,3,2}$]{};

\draw (7) -- (8) --(9)--(9bis)--(10)--(10bis)--(11)-- (7);

\node[draw, circle, fill = red] (2bis) at (2.5,1.75) [label = below:$a_{j,3}$]{};
\node[draw,circle, fill = red] (8bis) at (5.5,1.75) [label=below right:$e_{j,3}$]{};
\draw (8) -- (8bis) ;
\draw (2) -- (2bis) --  (7) ;

\node[draw, circle] (7) at (-3.5,2) [label=below:$d_{j,1,1}$]{};
\node[draw, circle] (8) at (-4.5,2) [label=below:$d_{j,1,7}$]{};
\node[draw, circle, fill = red] (9) at (-5,2.8) [label=left:$d_{j,1,6}$]{};
\node[draw, circle, fill = red] (9bis) at (-5,3.7) [label=left:$d_{j,1,5}$]{};
\node[draw, circle, fill = ForestGreen] (10) at (-4,4.4) [label=above:$d_{j,1,4}$]{};
\node[draw, circle, fill = red] (10bis) at (-3,3.7) [label=right:$d_{j,1,3}$]{};
\node[draw, circle, fill = red] (11) at (-3,2.8) [label=right:$d_{j,1,2}$]{};

\draw (7) -- (8) --(9)--(9bis)--(10)--(10bis)--(11)-- (7);

\node[draw, circle, fill = red] (6bis) at (-2.5,1.75) [label = below:$a_{j,1}$]{};
\node[draw, circle, fill = red] (8bis) at (-5.5,1.75) [label = below left:$e_{j,1}$]{};
\draw (8) -- (8bis) ;
\draw (7) -- (6bis) -- (6) ;

\node[draw, circle] (17) at (0,4.3) [label=right:$d_{j,2,1}$]{};
\node[draw, circle, fill = red] (18) at (-1,5) [label=left:$d_{j,2,2}$]{};
\node[draw, circle, fill = red] (18bis) at (-1,6) [label=left:$d_{j,2,3}$]{};

\node[draw, circle, fill = ForestGreen] (19) at (-0.5,6.5) [label=above left:$d_{j,2,4}$]{};
\node[draw, circle, fill = red] (19bis) at (0.5,6.5) [label=above right:$d_{j,2,5}$]{};
\node[draw, circle, fill = red] (20) at (1,6) [label=right:$d_{j,2,6}$]{};
\node[draw, circle] (21) at (1,5) [label=right:$d_{j,2,7}$]{};

\draw (17) -- (18) --(18bis) -- (19) -- (19bis) -- (20) -- (21) -- (17);

\node[draw, circle, fill = red] (1bis) at (0,3.4) [label = left:$a_{j,2}$]{};
\node[draw, circle, fill = red] (21bis) at (2,4.5) [label = right:$e_{j,2}$]{};

\draw (17) -- (1bis)-- (1) ;
\draw (21) -- (21bis) ;

\end{tikzpicture}
  \caption{Graph $G_{C_j}$\label{f:npc-GCj'}}
\end{figure}

To conclude the proof of Theorem~\ref{th:BDnpc}, it remains to prove
that $\mathcal I$ admits a truth assignment satisfying all clauses
if and only if $G_{\mathcal I}$ is a Blanche Descartes graph.

Suppose first that the variables of $\mathcal I$ admit a truth
assignment satisfying all clauses of $\mathcal I$.  Let us build a stable
set $S$ of $G_{\mathcal I}$. First add to $S$ all green vertices. Add also
to $S$ all vertices $t_i$ (resp. $f_i$) such that $x_i$ it true (false). For every clause $C_j$, choose an integer $k_j\in \{1, 2, 3\}$ such that $y_{j, k} = x_i$ and
$x_i$ is true or $y_{j, k} = \overline{x_i}$ and $x_i$ is false (this
is possible since the clauses are all satisfied by the truth
assignment).  Then, add $c_{j, k_j}$ and $d_{j, k_j, 7}$ to $S$.  For
all $k'\in \{1, 2, 3\} \sm \{k_j\}$, add the vertex $d_{j, k', 1}$ to
$S$. Observe that $S$ is a stable set. We now prove it is a strong splitting stable set.

\begin{myclaim}
    $G_\mathcal{I}-S$ is a forest.
\end{myclaim}

\begin{proofclaim}
    It is sufficient to show that any cycle of $G_\mathcal{I}$ contains a vertex of $S$. If a cycle contains a private vertex of $L_{k,\ell}$, then it must contain $\alpha$, and thus contains a green vertex which is in $S$. In addition, notice that any cycle contained entirely in a clause gadget or a variable gadget go through a green vertex which is also in $S$. Thus, if there is a cycle in $G_\mathcal{I}-S$, we can extract a path from a variable gadget to a clause gadget and to another distinct variable gadget. Such a path must contain some vertices $c_{j,k}, d_{j,k,1},d_{j,k,7}$ for some $j \in \{1,...,m\}$ and $k\in \{1,2,3\}$, and at least one of them is in $S$.
\end{proofclaim}

\begin{myclaim}
    $S$ is a strong splitting stable set.
\end{myclaim}

\begin{proofclaim}
    First, by construction, all vertices of $S$ are at distance at least $3$ from each other. Indeed, if $d_{j,k,7}\in S$ for some $k\in \{1,2,3\}$ and $j\in \{1,...,m\}$, then if $y_{k,j}=x_i$ for some $i\in \{1,...,n\}$, then $t_i\in S$ and $e_{j,3}t_i\notin E(G_\mathcal{I})$. The case $y_{j,k}=\overline{x_i}$ is similar, and all the other cases are trivial by construction. From that remark, it is sufficient to show that $S$ is a splitting stable set. 

    By contradiction, assume that a vertex $v\in S$ is adjacent to two distinct vertices in the same connected component of $G_\mathcal{I}-S$. First, observe that $v$ cannot be green :
    \begin{itemize}
        \item all the neighbors of $\alpha$ lie in different connected components in $G_\mathcal{I}-S$ since they are in a unique path from $\alpha$ to another green vertex ;
        \item the same remark holds for $\gamma_i$'s
        \item any $\alpha_i$ has two kinds of neighbors : the private vertices of $L_{k,\ell}$ which all lie in different connected components in $G_{\mathcal{I}}-S$ and its two red neighbors in either a clause gadget or a vertex gadget. In all cases, they are not in the same connected component since another vertex from the cycle is in $S$, and there is no path between them whose vertices are in $V(L_{k,\ell}) \setminus S$. 
    \end{itemize}
    The same reasoning than the one for $\alpha_i$'s works for uncoloured vertices of $S$.
\end{proofclaim}

Conversely, suppose that $G_{\mathcal I}$ is a Blanche Descartes graph. By Theorem~\ref{th:BD}, $G_{\mathcal I}$ contains a non-empty strong splitting stable set $S$.  We now prove several claims.

\begin{myclaim}
  \label{c:BDnored}
  $S$ contains no red vertex.
\end{myclaim}

\begin{proofclaim}
  By contradiction, assume that $S$ contains a red vertex $\beta_i$ for some $i\in \{1,...,k\}$. Then $\gamma_i\notin S$ since both vertices are at distance $2$ from each other. By Lemma~\ref{l:BDL}, $\alpha$ is not in $S$ since there is a copy of $L$ between $\alpha$ and $\gamma_i$. Notice that, by construction, each red vertex $\beta_i$ is adjacent to another red vertex $\beta_j$ with $j\neq i$. Notice that $\gamma_j\notin S$ since $\alpha\notin S$ by Lemma~\ref{l:BDL}, and the red vertex between $\gamma_j$ and $\beta_j$ cannot be in $S$ since it is at distance $2$ from $\beta_i$. Thus, $\beta_i\notin S$.

  We a similar argument we show that the same result holds for all red vertices exclusive to $L_{k,\ell}$.
\end{proofclaim}

\begin{myclaim}
  \label{c:BDegreen}
  $S$ contains at least one green vertex.
\end{myclaim}

\begin{proofclaim}
  Since $S$ is not empty and by \eqref{c:BDnored}, we may assume that
  $S$ contains an uncolored vertex. Let us check that the presence of
  any uncolored vertex in $A$ entails a green vertex in $A$.

  If $S$ contains $t_i$ or $f_i$ for some $i= 1, \dots, n$, then $S$
  must contain $b_{i, 4}$ because $G_i$ induces a cycle.

  If $S$ contains $d_{j,k, 1}$ or $d_{j, k, 7}$ for some
  $j=1, \dots, m$ and $k=1, 2, 3$, then it must contain $d_{j, k, 4}$
  since they are in a cycle.

  If $S$ contains $c_{j, k}$ for some $j= 1, \dots, m$ and $k=1, 2, 3$, then it must contain $c_{j, 6}$ since they are in a cycle and the two other uncolored vertices are at distance at most $2$.

  If $S$ contains $c_{j,1}$ $j\in \{ 1, \dots, m\}$, then it must either $c_{j,6}$ or $c_{j,3}$. However, if it contains $c_{j,3}$, then $c_{j,2}$ is adjacent to two vertices of $S$ and thus $S$ is not strongly splitting. The proof is similar when $S$ contains $c_{j, 3}$.
\end{proofclaim}

\begin{myclaim}
  \label{c:BDagreen}
  $S$ contains all green vertices. 
\end{myclaim}

\begin{proofclaim}
  By~\eqref{c:BDegreen}, some green vertex is in $S$. So, by
  the property of $L$, $\alpha\in S$
  since every green vertex is contained together with $\alpha$ in some
  copy of $L$.  Hence, all
  green vertices are in $S$. 
\end{proofclaim}

Since $G_i$ is a cycle, we know that exactly one of
$t_i$ or $f_i$ is in $A$. If $t_i\in S$ we assign the value true to
$x_i$ and the value false otherwise.  We claim that this truth
assignment satisfies all clauses of $\mathcal I$.

Indeed, let $C_j$ be a clause. Since
$c_{j, 1}\cdots c_{j, 6}c_{j, 1}$ is a cycle, at least
one vertex among $c_{j, 1}$, $c_{j, 2}$ or $c_{j, 3}$ must be in $A$,
say $c_{j, k}$.  Suppose that $y_{j, k}= x_i$. If $x_i$ is assigned
value false, then $f_i\in S$ and
$f_ie_{j,k}, e_{j,k}d_{j, k, 7}\in E(G_{\mathcal I})$.  Hence, none of $d_{j,k,1}$ and
$d_{j, k, 7}$ is in $S$ since otherwise they would share a common neighbor with a vertex of $S$, which is a contradiction.  Hence, $x_i$ is assigned value true.
It follows that $C_j$ is satisfied.  The proof when
$y_{j, k}= \overline{x_i}$ is symmetric.  We proved that all clauses
are satisfied.  This concludes the proof of Theorem~\ref{th:BDnpc}. 
\end{proof}

\subsection{NP-hard problems on Blanche Descartes graphs}

Given a graph $G$ and an integer $k$, the \textsc{Maximum Independent Set} problem asks whether $G$ contains a stable set of size at least $k$, and the k-\textsc{Coloring} problem asks whether $G$ has chromatic number at most $k$. Until the very recent work of Rz{\k{a}}{\.z}ewski and Walczak in which \textsc{Maximum Independent Set} was proved to be polynomially tractable in Burling graphs, all the hereditary graph classes on which those problems are tractable were also $\chi$-bounded. However, it remains an open question whether any graph class in which the k-\textsc{Coloring} problem is in P is necessarily $\chi$-bounded. 

Here, we show that both problems remain NP-complete on Blanche Descartes graphs, and, as a corollary, on Zykov graphs as well by Lemma~\ref{l:ZiBD}.

\begin{theorem}
    \label{thm:BDMIS}
    \textsc{Maximum Independent Set} is NP-complete in Blanche Descartes Graphs.
\end{theorem}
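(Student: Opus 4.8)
The plan is to reduce from \textsc{Maximum Independent Set} on general graphs, using the fact (Lemma~\ref{l:allSubBD}) that subdividing every edge of an arbitrary graph $G$ at least twice yields a Blanche Descartes graph. So, given an instance $(G,k)$ of \textsc{Maximum Independent Set}, I would let $G'$ be the graph obtained from $G$ by subdividing each edge exactly twice: for every edge $uv \in E(G)$ replace it by a path $u\,x_{uv}\,y_{uv}\,v$. By Lemma~\ref{l:allSubBD}, $G'$ is a Blanche Descartes graph, and $G'$ is computable in polynomial time, so this is a valid polynomial reduction provided the target problem value can be read off a maximum stable set of $G'$.

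The heart of the argument is therefore the combinatorial identity relating $\alpha(G)$ and $\alpha(G')$. Let $m = |E(G)|$. The claim is that $\alpha(G') = \alpha(G) + m$. For one direction, given a stable set $S$ in $G$, I would build a stable set in $G'$ by keeping $S$ and, for each edge $uv$, adding the one of the two subdivision vertices $x_{uv}, y_{uv}$ that is not adjacent to a vertex of $S$ (if $u \in S$ then $u$ is not in $S$'s... more precisely: if $u\in S$ then pick $y_{uv}$, the neighbor of $v$; if $v\in S$ pick $x_{uv}$; since $S$ is stable at most one of $u,v$ lies in $S$, so one of the two inner vertices is always available, and the two inner vertices of distinct edges are non-adjacent). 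This gives a stable set of size $|S| + m$, hence $\alpha(G') \ge \alpha(G) + m$. For the converse, take a maximum stable set $S'$ of $G'$. For each edge $uv$, $S'$ contains at most one of the two inner vertices $x_{uv}, y_{uv}$ (they are adjacent), so $S'$ uses at most $m$ inner vertices; letting $S = S' \cap V(G)$, one checks $S$ is a stable set of $G$ (if $u,v \in S$ for some edge $uv$, then neither $x_{uv}$ nor $y_{uv}$ is in $S'$ since each is adjacent to an endpoint in $S'$, and then $S'$ could be enlarged by adding $x_{uv}$, contradicting maximality), so $|S'| = |S| + (\text{inner vertices used}) \le \alpha(G) + m$. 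Combining, $\alpha(G') = \alpha(G) + m$, and $(G',k+m)$ is the desired instance.

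Membership in NP is immediate, since a stable set of size $\ge k+m$ is a polynomial-size certificate checkable in polynomial time. Together with the hardness reduction above this establishes NP-completeness on Blanche Descartes graphs, and by Lemma~\ref{l:ZiBD} the same holds on Zykov graphs.

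I expect the only delicate point to be the careful bookkeeping in the equality $\alpha(G') = \alpha(G)+m$ — in particular checking that when two endpoints of an edge are both (wrongly) put in $S'$, one can strictly augment $S'$, which forces $S'\cap V(G)$ to be genuinely independent in $G$, and verifying that the inner-vertex choices in the forward direction never conflict across different edges. These are routine adjacency checks given that distinct subdivision paths share only the original vertices of $G$. One should also double-check the trivial edge cases (isolated vertices of $G$, $m=0$), which are immediate.
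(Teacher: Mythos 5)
Your overall strategy is the same as the paper's: reduce from \textsc{Maximum Independent Set} by subdividing every edge an even number of times, invoke Lemma~\ref{l:allSubBD} for membership in the class, and use a Poljak-type identity relating the two independence numbers (the paper subdivides four times and cites \cite{poljak:74} for $\alpha(G^+)=\alpha(G)+2m$, whereas you subdivide twice and prove $\alpha(G')=\alpha(G)+m$ by hand; both choices are compatible with Lemma~\ref{l:allSubBD}). However, your proof of the upper bound $\alpha(G')\le\alpha(G)+m$ contains a genuine error. You claim that for a \emph{maximum} stable set $S'$ of $G'$, the trace $S=S'\cap V(G)$ is stable in $G$, arguing that if $u,v\in S'$ for an edge $uv$ then $S'$ could be enlarged by adding $x_{uv}$. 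This is false: $x_{uv}$ is adjacent to $u\in S'$, so adding it does not preserve stability, and there is no contradiction with maximality. In fact the intermediate claim itself fails: take $G=K_3$ on $\{u,v,w\}$; then $G'$ is a $9$-cycle $u\,x_{uv}\,y_{uv}\,v\,x_{vw}\,y_{vw}\,w\,x_{wu}\,y_{wu}\,u$ with $\alpha(G')=4=\alpha(G)+m$, and $\{u,v,y_{vw},x_{wu}\}$ is a maximum stable set whose trace $\{u,v\}$ is not stable in $G$. So the step ``one checks $S$ is a stable set of $G$'' does not hold as stated.

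The identity you want is nonetheless true, and the standard repair is the exchange argument of Poljak: if both endpoints of a subdivided edge $uv$ lie in $S'$, then $x_{uv},y_{uv}\notin S'$ and $(S'\setminus\{u\})\cup\{x_{uv}\}$ is a stable set of the same size, so one may assume that no subdivided edge has both endpoints in $S'$, after which your counting goes through. (Alternatively, a direct count works: if $B$ is the set of edges with both endpoints in $S'$, then $S'$ contains at most $m-|B|$ inner vertices and $|S'\cap V(G)|\le\alpha(G)+|B|$, since deleting one endpoint per edge of $B$ yields a stable set of $G$.) Or you can simply follow the paper and cite \cite{poljak:74} outright, with four subdivisions per edge and target value $k+2m$. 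With this repair your reduction, the membership-in-NP remark, and the transfer to Zykov graphs via Lemma~\ref{l:ZiBD} are all correct and essentially identical to the paper's argument.
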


\begin{proof}
    We reduce from \textsc{Maximum Independent Set} on arbitrary graphs. From an instance $(G,k)$, we construct the graph $G^+$ from $G$ by subdividing four times each edge, meaning that we replace each edge $uv\in E(G)$ by a path $uw_1w_2w_3w_4v$ in $G^+$. It follows from \cite{poljak:74} that $G$ contains an independent set of size $k$ if and only if $G^+$ contains an independent set of size $k+2m$, where $m=|E(G)|$. In addition, $G^+$ is a Blanche Descartes graph by Lemma~\ref{l:allSubBD}.
\end{proof}

\begin{theorem}
    \label{thm:BDColoring}
    \textsc{3-Coloring} is NP-complete in Blanche Descartes Graphs.
\end{theorem}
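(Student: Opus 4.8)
\textbf{Proof proposal for Theorem~\ref{thm:BDColoring}.}

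The plan is to reduce from \textsc{3-Coloring} on arbitrary graphs, exactly as we did for \textsc{Maximum Independent Set} in Theorem~\ref{thm:BDMIS}, by turning an arbitrary instance $G$ into a Blanche Descartes graph via edge subdivision. The key point is that subdividing an edge an \emph{even} number of times preserves $3$-colorability: if $uv$ is replaced by a path $uw_1w_2v$ (a double subdivision), then in any proper $3$-coloring we may always extend a coloring of $G\setminus\{uv\}$ across the path (the path $u-w_1-w_2-v$ on $4$ vertices can be properly $3$-colored for any prescribed pair of colors at its endpoints, \emph{including} the case where $u$ and $v$ receive the same color), and conversely any proper $3$-coloring of the subdivided graph restricts to a proper coloring of $G$ since a path with equal-colored endpoints forces nothing, while a path whose endpoints get distinct colors is consistent with an edge of $G$. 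Hence $G$ is $3$-colorable if and only if $G^{++}$, obtained from $G$ by subdividing every edge exactly twice, is $3$-colorable.

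First I would state the construction: given an instance $G$ of \textsc{3-Coloring}, form $G^{++}$ by replacing every edge $uv\in E(G)$ with a path $uw_1w_2v$ on two new internal vertices. This is clearly a polynomial-time construction. Second, by Lemma~\ref{l:allSubBD}, $G^{++}$ is a Blanche Descartes graph, since every edge of $G$ has been subdivided at least twice. Third, I would prove the equivalence $\chi(G)\le 3 \iff \chi(G^{++})\le 3$ by the elementary path-extension argument sketched above: one direction restricts a coloring of $G^{++}$ to the vertices of $G$ and checks that adjacent vertices of $G$ receive distinct colors (if $u$ and $v$ had the same color, the internal path $u w_1 w_2 v$ on four vertices with equal endpoints is still properly colorable, so this does \emph{not} immediately give a contradiction — so here I must be careful: a double subdivision does \emph{not} in general preserve the chromatic number in the naive way).

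Because of that subtlety, the main obstacle is getting the parity and the gadget right. The correct fact is that subdividing an edge \emph{exactly twice} does \emph{not} preserve $3$-colorability (a path of length $3$ between $u$ and $v$ admits $u,v$ being monochromatic). The standard fix, which I would use, is to subdivide each edge a number of times congruent to $1$ modulo $3$ — for instance \emph{four} times — so that the replacement path $uw_1w_2w_3w_4v$ has length $5$; then a proper $3$-coloring of this path exists if and only if the endpoints $u$ and $v$ receive \emph{distinct} colors (since for paths, monochromatic endpoints are forbidden exactly when the path length is $\not\equiv 0 \pmod 3$, i.e. we need the number of edges to be $\equiv 2 \pmod 3$; length $5$ works, as does length $2$). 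Subdividing four times is $\ge 2$ subdivisions, so Lemma~\ref{l:allSubBD} still applies and $G^{++}$ (now the four-subdivision $G^+$) is a Blanche Descartes graph. Then the equivalence becomes clean: $\chi(G)\le 3 \iff \chi(G^{+})\le 3$, each direction being a one-line path-extension / path-restriction argument. Combined with membership in NP (a $3$-coloring is a polynomial certificate), this gives NP-completeness of \textsc{3-Coloring} on Blanche Descartes graphs, and hence, by Lemma~\ref{l:ZiBD}, on Zykov graphs as well.
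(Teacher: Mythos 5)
There is a genuine gap, and it sits exactly at the step you flagged and then tried to repair. Your ``standard fix'' rests on the claim that a path of length $5$ (or, as you put it, length $\equiv 2 \pmod 3$) between $u$ and $v$ admits a proper $3$-coloring if and only if $u$ and $v$ receive distinct colors. This is false. With three colors, a path with at least one internal vertex can be properly colored for \emph{every} prescribed pair of endpoint colors, including equal colors: for instance $u=1,\,w_1=2,\,w_2=1,\,w_3=2,\,w_4=3,\,v=1$ properly colors the path $u w_1 w_2 w_3 w_4 v$. (The parity fact you are recalling is the one for $2$-colorings, or for forcing along chains of triangles; it has no analogue for bare paths with three colors.) Consequently the $4$-subdivision of \emph{any} graph is $3$-colorable --- color all original vertices with color $1$ and fill in each private path as above --- so your equivalence $\chi(G)\le 3 \iff \chi(G^{+})\le 3$ fails badly: the right-hand side is always true (take $G=K_5$). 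No subdivision-based reduction can work for \textsc{3-Coloring}, which is precisely why this theorem cannot be obtained by the same one-line argument as Theorem~\ref{thm:BDMIS}.

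What is missing is an ``inequality gadget'': a Blanche Descartes graph $H$ with $\chi(H)=3$ and two designated vertices that receive \emph{different} colors in every proper $3$-coloring, so that replacing each edge $uv$ of $G$ by a copy of $H$ (identifying the designated vertices with $u$ and $v$) preserves $3$-colorability in both directions. The paper builds such an $H$ inside the Blanche Descartes scheme itself: a stable set $\{s_1,\dots,s_{19}\}$, a copy of $C_7$ matched to every $7$-subset, and one matching edge at $s_1$ deleted; a pigeonhole argument forces $\{s_1,\dots,s_7\}$ to be the unique monochromatic $7$-subset in any $3$-coloring, which forces $s_8$ to differ from $s_1$, while the deleted edge keeps $\chi(H)=3$. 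Membership of the final graph in the class is then checked via Theorem~\ref{th:BD} (the union of the stable sets $S$ over all copies of $H$ is a strong splitting stable set whose removal leaves disjoint copies of $C_7$), not via Lemma~\ref{l:allSubBD}. Your NP-membership remark and the transfer to Zykov graphs via Lemma~\ref{l:ZiBD} are fine, but the core of the proof needs this gadget construction, which your proposal does not supply.
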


\begin{proof}
Before the reduction, let us construct a Blanche Descartes graph $H$ with two special vertices $a$ and $b$, such that $\chi(H)=3$ and in any $3$-coloring of $H$, $a$ and $b$ have different colors. The construction is as follows:
 \begin{itemize}
        \item Take a stable set $S =\{s_1,\ldots, s_{19}\}$ on $19$ vertices.
        \item For each subset $T$ of $S$ of $7$ vertices, add a copy of $C_7$, the cycle with $7$ vertices, and a matching between $T$ and $C_7$.
        \item Remove the edge between $s_1$ and the copy of $C_7$ associated with $\{s_1,\cdots, s_7\}$.
    \end{itemize}

    We first prove that $H$ has a proper $3$-coloring. Start by coloring the vertices of $S$ such that the only monochromatic subset of size $7$ is $\{s_1,\cdots, s_7\}$. It is possible for instance by giving color $2$ to $s_1,\cdots, s_7$, and then color $i\mod 2$ to $s_i$ for $8\leq i \leq 19$. Then, notice that the copy of $C_7$ associated with $\{s_1,...,s_7\}$ can be properly colored using the color $2$ for the only vertex which was connected to $s_1$ before the edge was removed, and completing the coloring using colors $0$ and $1$. All the others $C_7$s can be colored using $3$ colors since the subsets of $S$ matched to them are not monochromatic.

    Then, we prove that $s_1$ and $s_8$ have different colors in any $3$-coloring of $H$. Note that in any $3$-coloring of $H$, one subset of $7$ vertices of $S$ has to be monochromatic, by the pigeonhole principle. This subset must be $\{s_1,...,s_7\}$, otherwise there is perfect matching between a $C_7$, which has no $2$-coloring, and a monochromatic set of $7$ vertices. Then, $s_8$ has to have a different color from $s_1$, otherwise $\{s_2,...,s_8\}$ would also be monochromatic.

    We are now ready for the reduction, which is from \textsc{$3$-Coloring} on arbitrary graphs. Let $G$ be an instance of this problem. 
    We construct a new graph $G'$ as follows : start from a copy of $G$, and for each $uv\in E(G)$, remove this edge, create one copy of $H$ and identify $s_1$ and $s_8$ with respectively $u$ and $v$.
    
    A direct consequence of the properties of $H$ is that $G$ has a proper $3$-coloring if and only if $G'$ has one. In addition, $G'$ is a Blanche Descartes graph. Indeed, let $I$ be the union of all stable sets $S$ of all copies of $H$ in $G'$. Notice that $G'-I$ is a disjoint union of $C_7$, which is a Blanche Descartes graph, and each vertex from $I$ is adjacent to at most one vertex from each $C_7$ and reciprocally. 
\end{proof}

\begin{credits}
\subsubsection{\ackname} The authors  are partially supported by the French National Research Agency under research grant ANR DIGRAPHS ANR-19-CE48-0013-01, ANR Twin-width ANR-21-CE48-0014-01 and the LABEX MILYON  (ANR-10-LABX-0070) of Université de Lyon, within the program Investissements d’Avenir (ANR-11-IDEX-0007) operated by the French National Research Agency (ANR).

\end{credits}

\bibliographystyle{plain}
\bibliography{biblio.bib}






\end{document}